\documentclass[11pt]{amsart}

\usepackage{amsmath,amsthm,amsfonts,amssymb,amscd,euscript,enumerate}
\usepackage{tikz}
\usepackage{tikz-cd}
\usepackage[all,arc]{xy}
\usepackage{enumerate}
\usepackage{mathrsfs} 
\usepackage{thmtools}
\usepackage{datetime}
\usepackage[colorlinks, citecolor = darkspringgreen, linkcolor = melon]{hyperref}
\usepackage{graphicx}
\usepackage{color}
\usepackage{cleveref}
\Crefname{enumi}{}{}
\usepackage{comment}
\usepackage{float}
\input{xy}
\xyoption{all}
\usetikzlibrary{shapes}
\usetikzlibrary{tikzmark,calc}
\usepackage{mathtools}
\usepackage{listofitems}
\usepackage{wrapfig}
\usepackage{caption}
\usepackage{sidecap}
\usepackage{stmaryrd}
\usepackage{url}
\usetikzlibrary{decorations.pathreplacing, arrows.meta, calc}
\usepackage{enumitem}
\usetikzlibrary{patterns.meta}

\definecolor{melon}{rgb}{0.4, 0.2, 1}
\definecolor{maroon}{rgb}{0.5,0,0}
\definecolor{violet}{rgb}{0.6, 0.3, 0.7}
\definecolor{darkviolet}{rgb}{0.35, 0, 0.6}
\definecolor{darkspringgreen}{rgb}{0.14, 0.6, 0.2}
\definecolor{amethyst}{rgb}{0.6, 0.4, 0.8}

\setlength{\textwidth}{\paperwidth}
\addtolength{\textwidth}{-2.25in}
\calclayout

\setlength{\parskip}{4pt}%

\setboolean{@twoside}{false}


\newtheorem{thm}{Theorem}[section]
\newtheorem{cor}[thm]{Corollary}
\newtheorem{prop}[thm]{Proposition}
\newtheorem{lem}[thm]{Lemma}
\newtheorem{conj}[thm]{Conjecture}

\newtheorem{prob}[thm]{Problem}

\theoremstyle{definition}
\newtheorem{defn}[thm]{Definition}

\newtheorem{exmp}[thm]{Example}

\newtheorem*{conjs*}{Conjectures}

\theoremstyle{remark}
\newtheorem{rem}[thm]{Remark}

\makeatletter
\let\c@equation\c@thm
\makeatother
\numberwithin{equation}{section}

\makeatletter
\newcommand*\bigcdot{\mathpalette\bigcdot@{.5}}
\newcommand*\bigcdot@[2]{\mathbin{\vcenter{\hbox{\scalebox{#2}{$\m@th#1\bullet$}}}}}
\makeatother

\makeatletter
\def\subsection{\@startsection{subsection}{3}%
  \z@{.5\linespacing\@plus.7\linespacing}{.1\linespacing}%
  {\bfseries}}
\makeatother


\newcommand{\Z}{\mathbb{Z}}

\newcommand{\R}{\mathbb{R}}

\newcommand{\calP}{\mathcal{P}}

\newcommand{\frakd}{\mathfrak{d}}

\newcommand{\frakD}{\mathfrak{D}}
\newcommand{\frakf}{\mathfrak{f}}

\newcommand{\GG}{\sigma}
\renewcommand{\phi}{\varphi}

\newcommand{\ii}{i}
\newcommand{\jj}{j}

\newcommand{\pp}{p}
\newcommand{\qq}{q}
\newcommand{\fph}{u} 
\newcommand{\fpv}{v} 
\newcommand{\fpi}{r} 
\newcommand{\fpj}{s} 
\DeclareMathOperator{\sh}{sh}
\DeclareMathOperator{\rsh}{rsh}

\DeclareMathOperator{\cut} { \setminus}

\DeclareMathOperator{\CG}{CG}
\newcommand{\CGbc}{\mathrm{CG}_{\mathit{b},\mathit{c}}}

\DeclareMathOperator{\rd}{\pi}
\newcommand{\east}{\mathbf{E}}
\newcommand{\north}{\mathbf{N}}
\newcommand{\tvco}{\varrho} 
\newcommand{\pt}{\mathbf{P}} 
\newcommand{\edge}{e} 
\newcommand{\fclu}[2]{\mathfrak{f}_{\mathbb{R}_{\leq 0}(#1, #2)}} 
\newcommand{\ftrop}[2]{\mathfrak{f}^\circ_{\mathbb{R}_{\leq 0}(#1, #2)}} 


\title{Cluster scattering diagrams via quiver moduli and tight gradings}

\author{Amanda Burcroff}
\address{Amanda Burcroff, Department of Mathematics, MIT, Cambridge, MA 02139, USA}
\email{\href{mailto:amandabu@mit.edu}{amandabu@mit.edu}}

\author{Kyungyong Lee}
\address{Kyungyong Lee, Department of Mathematics, University of Alabama, Tuscaloosa, AL 35487, USA and Korea
Institute for Advanced Study, Seoul 02455, Republic of Korea}
\email{\href{mailto:kyungyong.lee@ua.edu}{kyungyong.lee@ua.edu} \& \href{mailto:klee1@kias.re.kr}{klee1@kias.re.kr}}

\author{Lang Mou}
\address{Lang Mou, Department of Mathematics, University of California Davis, One Shields Avenue, Davis, CA 95616, USA}
\email{\href{mailto:lmou.math@gmail.com}{lmou.math@gmail.com}}

\author{Gregg Musiker}
\address{Gregg Musiker, School of Mathematics, University of Minnesota, Minneapolis, MN 55455, USA}
\email{\href{mailto:musiker@umn.edu}{musiker@umn.edu}}

\author{Markus Reineke}
\address{Markus Reineke, Faculty of Mathematics, Ruhr-Universit\"at Bochum,  Universit\"atsstra{\ss}e 150, 44780 Bochum, Germany}
\email{\href{mailto:markus.reineke@ruhr-uni-bochum.de}{markus.reineke@ruhr-uni-bochum.de}}

\date{\today}

\begin{document}

\begin{abstract}
    We study rank-2 cluster scattering diagrams through moduli spaces of quiver representations and a recently developed combinatorial framework of tight gradings. Combining quiver-theoretic and combinatorial methods, we prove and extend a collection of conjectures posed by Elgin--Reading--Stella concerning the structural and enumerative properties of the wall-function coefficients. The tight grading perspective also provides a new proof of the Weyl group symmetry of the scattering diagram.
\end{abstract}

\keywords{scattering diagram, quiver moduli, tight grading, cluster algebra, Weyl symmetry}

\maketitle

\setcounter{tocdepth}{1}
\tableofcontents

\vspace{-1em}

\section{Introduction}

\subsection{Background and motivation}

Scattering diagrams \cite{KS, GS} are combinatorial structures encoding wall-crossing phenomena arising in various sources of mathematics. Cluster scattering diagrams of Gross--Hacking--Keel--Kontsevich \cite{GHKK} are a special class attached to cluster algebras/varieties \cite{FZ1,FG1}, where the wall-functions reflect their mutation structure and positivity properties. In rank 2, they are closely related to the \emph{tropical vertex} of Gross--Pandharipande--Siebert, where the wall-functions are expressed through generating series of relative Gromov-Witten invariants of toric surfaces \cite{GPS} and Euler characteristics of quiver moduli \cite{RCQM,RW}.

From the cluster algebra perspective, the coefficients of rank-2 wall-functions lead to
the Laurent positivity in rank-2 cluster algebras $\mathcal{A}(b,c)$ of Fomin--Zelevinsky \cite{Zel}.  This positivity admits both combinatorial descriptions via compatible gradings using edges in Dyck paths \cite{LS,LLZ} and more geometric ones via scattering diagrams \cite{GHKK, KY}.

Based on extensive computational evidence, Elgin, Reading, and Stella \cite{ERS} formulated a collection of conjectures regarding the wall-function coefficients $\tau(i,j)$ of the cluster scattering diagram $\frakD_{(b,c)}$ associated with $\mathcal{A}(b,c)$. These conjectures, which primarily concern the dependence of $\tau(i,j)$ on the positive integers $b$ and $c$, form the starting point of the present work. Tight gradings, recently introduced in \cite{BLM,BLM2}, refine the framework of compatible gradings \cite{LLZ,Rupgengreed}, and provide a unified combinatorial model for these coefficients.
Our approach uses both quiver representation techniques and the combinatorics of tight gradings to confirm many of these predictions and to derive further generalizations.

\subsection{Elgin--Reading--Stella's conjectures}

For coprime positive integers $d$ and $e$, write the wall-function on the ray $\mathbb{R}_{\leq 0}(db, ec)$ in the cluster scattering diagram $\frakD_{(b,c)}$ as the power series
\[
    \fclu{db}{ec} = 1 + \sum \tau(i, j)x_1^{ib}x_2^{jc},
\]
where the sum is over positive integers $i$ and $j$ such that $(i,j) = (kd, ke)$ 
for some $k \geq 1$. Note that in the case that $(db,ec)$ is not a primitive vector, we can re-express $\mathbb{R}_{\leq 0} (db,ec) = \mathbb{R}_{\leq 0} (v_1,v_2)$ for $\gcd(v_1,v_2)=1$. See \eqref{eq: def of tau(i,j)} for how these $\tau(i,j)$ are precisely defined.

Elgin, Reading, and Stella posed the following conjectures and problem.

\begin{conjs*}[{\cite[Conjecture 1--18]{ERS}}] Let $g = \gcd(ib, jc)/\gcd(i, j)$.
    \leavevmode
    \begin{enumerate}[label=\textbf{C\arabic*}., ref= \textbf{C\arabic*}, topsep=0ex, itemsep=0.5ex]
    \item \label{C1} For $\ii,\jj>0$, the coefficient $\tau(\ii,\jj)$ is a polynomial in $b$, $c$, and $g$.
    \item \label{C2} The polynomial has $g$ as a factor and its degree in $g$ is $\gcd(\ii,\jj)$.
    \item \label{C3} Its degree in $b$ is $\jj-1$ and its degree in $c$ is $\ii-1$.
    \item \label{C4} The polynomial $(\max(\ii,\jj))! \tau(\ii ,\jj)$ has integer coefficients.
    \item \label{C5} $\displaystyle \tau(1,\jj)=\frac{g}{b}\binom{b}{\jj}$
    \item \label{C6} $\displaystyle \tau(\ii ,1)=\frac{g}{c}\binom{c}{\ii}$
    \item \label{C7} $\displaystyle \tau(\ii ,\ii ) = \dfrac{g}{(b-1)(c-1)\ii  + g}\binom{(b-1)(c-1)\ii  + g}{\ii }$
    \item \label{C8} $\displaystyle \tau(\ii ,\ii ) = \sum_{\ell=0}^{\infty} \frac{g}{\ell+1}\binom{\ii -1}{\ell}\binom{\ii (bc-b-c) + g-1}{\ell}$
    \item \label{C9} $\displaystyle \tau(\ii ,\ii -1)=\frac{g}{\ii (\ii b-\ii +1)}\binom{(\ii b-\ii +1)(c-1)}{\ii -1}$
    \item \label{C10} $\displaystyle \tau(\jj-1,\jj)=\frac{g}{\jj(\jj c-\jj+1)}\binom{(\jj c-\jj+1)(b-1)}{\jj-1}$
    \item \label{C11} $\tau^{b,b}(\ii ,\jj)$ is a polynomial in $b$ of degree $\ii +\jj-1$ that expands positively in the basis $\left\{\binom{b}{0}, \binom{b}{1}, \binom{b}{2}, ...\right\}$.
    \item \label{C12} $\tau^{b,b}(\ii,\jj)$ has unimodal log-concave coefficients.
    \item \label{C13} $\displaystyle \tau^{1,5}(2\jj,\jj) = \frac{1}{\jj}\sum_{\ell=0}^{\infty} \binom{\ell}{\jj-\ell+1}\binom{\jj+\ell-1}{\ell}$
    \item \label{C14} $\tau(\ii,\jj;k)$ is a polynomial of degree $\jj-k$ in $b$ and degree $\ii-k$ in $c$ and has a term that is a nonzero constant times $b^{\jj-k} c^{\ii-k}$.
    \item \label{C15} If $\gcd(\ii,\jj)=1$ then $\displaystyle \tau(\ii k,\jj k;k)=\frac{\tau(\ii,\jj;1)^k}{k!}$.
    \item \label{C16} $\displaystyle \tau(k,\jj k;k-1)=\frac{\tau(1,\jj;1)^{k-1}\cdot p_\jj}{(k-2)!}$, where $p_\jj$ is a polynomial in $b$ and $c$ that depends only on $\jj$, not $k$.
    \item \label{C17} $\displaystyle \tau(\ii k,k;k-1)=\frac{\tau(\ii,1;1)^{k-1}\cdot p_i}{(k-2)!}$, where $p_\ii$ is a polynomial in $b$ and $c$ that depends only on $\ii$, not $k$.
    \item \label{C18} If $\gcd(\ii,\jj)=1$, then $\tau(\ii k,\jj k;k-1)$ has a factor $p_{\ii,\jj}$  that depends only on $\ii$ and $\jj$, not on $k$, and the other factors of $\tau(\ii k,\jj k;k-1)$ also appear as factors of $\tau(\ii,\jj;1)$.
\end{enumerate}
\end{conjs*}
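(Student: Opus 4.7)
The plan is to prove conjectures C1--C18 by combining two complementary descriptions of the wall-function coefficient $\tau(i,j)$: a quiver-moduli interpretation, in which $\tau(i,j)$ arises as an Euler characteristic (or Poincar\'e polynomial) of a moduli space of stable representations of a generalized Kronecker-type quiver with parameters determined by $b$ and $c$, and the tight-grading framework of \cite{BLM,BLM2}, in which $\tau(i,j)$ is realized as a weighted sum, indexed by finitely many combinatorial ``shapes'', of explicit products of binomial coefficients in $b$, $c$, and $g$. The two viewpoints reinforce each other: the combinatorial side produces closed-form formulas and controls polynomial dependence in $b,c,g$, while the geometric side delivers positivity and naturally refines $\tau(i,j)$ into the level-$k$ pieces $\tau(i,j;k)$ indexed by Harder--Narasimhan or Jordan--H\"older stratifications.

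For the structural block C1--C4 I would argue on the combinatorial side. The set of tight gradings of type $(i,j)$ decomposes into finitely many shapes that are independent of $b$ and $c$, each contributing a product of binomials in $b$, $c$, and $g$; this alone yields C1. The visible factor of $g$ and the degree bounds in $b$ and $c$ (C2, C3) follow by inspecting, respectively, the ``root edge'' of each shape and the horizontal and vertical edge-gradings that it carries. Conjecture C4 should follow either from a natural $\mathfrak{S}_{\max(i,j)}$-action on the configurations whose orbit sizes divide $(\max(i,j))!$, or from the Betti-number interpretation of $\tau(i,j)$ via smooth projective quiver moduli for coprime dimension vectors.

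For the explicit formulas C5--C13 the strategy is case-by-case but unified. The boundary identities C5, C6 reduce to counts of single-row or single-column tight gradings; the diagonal formulas C7, C8 and the near-diagonal C9, C10 should follow from Lagrange inversion applied to a compact generating function, most easily extracted either from the wall-function recursion or from an explicit Kronecker-quiver moduli count in dimensions $(i,i)$ or $(i,i\pm 1)$. Conjecture C13 is a direct specialization once the general tight-grading formula for $\gcd(i,j)>1$ is in place. For C11 and C12 I would combine the tight-grading binomial expansion in $b$ with positivity of Poincar\'e polynomials of smooth quiver moduli to obtain both positivity in the basis $\{\binom{b}{k}\}$ and log-concavity/unimodality, potentially invoking Hodge-theoretic input in the spirit of the Huh--Adiprasito--Katz program.

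For the level-$k$ conjectures C14--C18 I would work primarily on the quiver side, where $\tau(i,j;k)$ corresponds to the contribution of a level-$k$ stratum in a stratification of the moduli space, and then translate back into tight gradings. The polynomial degree statements in C14 and the powers of $\tau(i,j;1)$ appearing in C15--C18 suggest a product or symmetric-algebra structure on these strata: concretely, I expect a factorization formula, analogous to a Hall-algebra product or to a decomposition of tight gradings into primitive sub-gradings, that yields C15 directly and C16--C17 via symmetry. The principal obstacle will be C18, whose non-trivial factor $p_{i,j}$ depends on $(i,j)$ but persists across all $k$ while being invisible at $k=1$; this requires a careful analysis of how the rescaled moduli spaces (or rescaled tight gradings) behave as $k$ grows, and I expect it to be the technical heart of the argument. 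Finally, the Weyl symmetry $\mathcal{A}(b,c)\leftrightarrow\mathcal{A}(c,b)$ proved elsewhere in the paper via tight gradings will be used throughout to pair conjectures (C5/C6, C9/C10, C16/C17) and halve the case analysis.
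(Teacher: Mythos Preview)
Your high-level two-pronged strategy (quiver moduli plus tight gradings) matches the paper, and your treatment of C5--C10 is essentially correct: C5, C6 by elementary counts, C7, C8 by Lagrange inversion on the central-slope functional equation, and C9, C10 from known Kronecker moduli computations in \cite{RW}.

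There are, however, two genuine gaps. First, you overclaim: the paper does \emph{not} prove C4, C11, C12, or C13, and says so explicitly. Your suggested routes for these are speculative. In particular your assertion that ``C13 is a direct specialization once the general tight-grading formula for $\gcd(i,j)>1$ is in place'' is wrong; the paper states that C13 is ``mysterious from both the quiver moduli and tight grading points of view.'' Likewise no $\mathfrak{S}_{\max(i,j)}$-action or Betti-number argument for C4, and no Hodge-theoretic input for C12, materializes.

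Second, and more seriously, you have misread what $\tau(i,j;k)$ is. It is \emph{not} the contribution of a level-$k$ Harder--Narasimhan or Jordan--H\"older stratum; it is simply the coefficient of $g^k$ when $\tau(i,j)$ is written as a polynomial in $g$. The paper's key mechanism for C14--C18 is the change-of-lattice identity
\[
1+\sum_{k\ge1}\tau_k x^k \;=\; F(x)^{g/bc},\qquad F(x)=1+\sum_{k\ge1}\varrho_k x^k,
\]
which after taking logarithms and expanding in powers of $g$ yields immediately
\[
\tau_k(n)=\frac{1}{n!}\sum_{k_1+\cdots+k_n=k}\tau_{k_1}(1)\cdots\tau_{k_n}(1).
\]
This single formula gives C15 (take $n=k$) and C18 (take $n=k-1$) for free; C16, C17 then follow from a short explicit computation of $\tau(2,2j;1)$. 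So C18 is not the ``technical heart'' at all---it drops out of a two-line generating-function manipulation---and no stratification of moduli is involved. Your HN-stratification picture, whatever its merits, is not the mechanism here and would have to produce this exponential identity to succeed.
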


\begin{prob}[{\cite[Section 4]{ERS}}]\label{problem_of_ERS}
    Understand how the Weyl group symmetry appears in the combinatorics of tight gradings.
\end{prob}

\subsection{Main results}

The above conjectures regard the cluster scattering diagram $\frakD_{(b,c)} = \frakD(1+x_1^b, 1+x_2^c)$, which is closely tied with another scattering diagram $\frakD^\circ_{(c,b)} = \frakD((1+x_1)^c, (1+x_2)^b)$ via the change-of-lattice transformation; see \Cref{section: change of lattice}. (Beware the swap between $b$ and $c$.) 

Let $d$ and $e$ be positive integers with $\gcd(d,e)=1$.  The wall-function of $\frakD^\circ_{(c,b)}$ on the wall $\mathbb{R}_{\leq 0}(d,e)$ is written as
\[
    \ftrop{d}{e} = 1 + \sum_{k\geq 1} \tvco(kd, ke) x_1^{kd} x_2^{ke}.
\]
With $d$ and $e$ fixed, write $\tau_k = \tau(kd, ke)$ and $\tvco_k = \tvco(kd, ke)$. We summarize in the theorem below the structural properties obtained for $\tau_k$ and $\tvco_k$. The proof will be provided in \Cref{section: proof}.

\newpage 
\begin{thm}\label{thm: main 1 intro}
    \leavevmode
    \begin{enumerate}[font = \upshape, topsep=0ex]
        \item There are nonnegative integers $\lambda(\pt_1, \pt_2)$ associated to pairs of integer 
        partitions, here $\pt_i = (p_1,p_2,\dots, p_\ell)$ with $p_1 \geq p_2 \geq \ldots p_\ell \geq 1$,
        such that
        \begin{equation}\label{eq: tvco main}
            \tvco_k = \sum_{\pt_1\vdash kd}\sum_{\pt_2\vdash ke} \lambda(\pt_1,\pt_2) \prod_{p\in \pt_1} \binom{c}{p}\prod_{q\in \pt_2} \binom{b}{q} \in bc\cdot \mathbb{Q}[b,c],
        \end{equation}
        and $\tvco_k$ has a leading term $c^{kd}b^{ke}$ with positive coefficient. 
        \item Let $g=\gcd(db,ec)$. Then $\tau_k$'s are expressed in terms of $\tvco_k$'s by
        \begin{align*}
            \tau_k & =  \sum_{\ell=1}^k \binom{g/(bc)}{\ell} \sum_{k_1+\cdots+k_\ell = k} \tvco_{k_1}\cdots \tvco_{k_\ell} \\
            & = g\sum_{\ell=1}^k \frac{(g-bc)\cdots(g-(\ell-1)bc)}{\ell!}\sum_{k_1+\cdots+k_\ell = k} \frac{\tvco_{k_1}}{bc}\cdots \frac{\tvco_{k_\ell}}{bc}.
        \end{align*}
        Therefore every $\tau_k$ is in $\mathbb{Q}[b,c,g]$. Its degree in $g$ is $k$ by the positivity in \emph{\Cref{thm: main 1 intro}(1)}.
        \item Write $\tau_k$ as a polynomial in $g$:
        \[
            \tau_k = \sum_{n=1}^k \tau_k(n) g^n, \quad \tau_k(n)\in \mathbb{Q}[b,c].
        \]
        Then each $\tau_k(n)$ has a leading term $b^{ke-n}c^{kd-n}$ with positive coefficient. 
        \item For any $n=1, \dots, k$, we have
        \[
            \tau_k(n) = \frac{1}{n!}\sum_{k_1+\cdots+k_n=k} \tau_{k_1}(1)\cdots \tau_{k_n}(1).
        \]
    \end{enumerate}
\end{thm}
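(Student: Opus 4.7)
The plan is to take Part (1) as the main technical input, established via quiver moduli, and then deduce Parts (2)--(4) as formal power-series consequences of the change-of-lattice identity recalled in \Cref{section: change of lattice}.

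For Part (1), I would work directly with $\frakD^\circ_{(c,b)}$ and follow the Reineke--Weist style approach of \cite{RCQM,RW,GPS}. Write $(1+x_1)^c = \prod_{i=1}^c (1+x_1)$ and $(1+x_2)^b = \prod_{j=1}^b (1+x_2)$ and perturb each copy onto its own ray; the scattering on $\mathbb{R}_{\leq 0}(d,e)$ is then controlled by Euler characteristics of moduli of stable representations of the resulting bipartite quiver with $c$ sources and $b$ sinks. Collecting the contributions to $\varrho_k$ according to how the $c$ (resp. $b$) available rays are grouped into blocks of sizes $\pt_1 = (p_1,\dots,p_{\ell_1}) \vdash kd$ and $\pt_2 = (q_1,\dots,q_{\ell_2}) \vdash ke$, the number of ways to select the blocks is exactly $\prod_p \binom{c}{p}\prod_q \binom{b}{q}$, and the remaining geometric factor $\lambda(\pt_1,\pt_2)$ is the Euler characteristic of the corresponding stratum -- in particular a nonnegative integer. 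The divisibility $\varrho_k \in bc \cdot \mathbb{Q}[b,c]$ is then automatic since both partitions are nonempty and $\binom{c}{p}, \binom{b}{q}$ vanish at $c=0, b=0$. For the leading $c^{kd}b^{ke}$ coefficient: the contribution from each pair $(\pt_1,\pt_2)$ equals $\lambda(\pt_1,\pt_2)\prod_p \tfrac{1}{p!}\prod_q \tfrac{1}{q!}$, all nonnegative, and strict positivity of the total reduces to exhibiting one pair with $\lambda(\pt_1,\pt_2)>0$, which is guaranteed by the ``fully generic'' pair $\pt_1=(1^{kd}), \pt_2=(1^{ke})$, whose nontriviality follows from classical non-vanishing for blow-ups of toric surfaces.

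For Part (2), the change-of-lattice identity from \Cref{section: change of lattice} translates into the formal power-series equality
\[
1 + \sum_{k\geq 1} \tau_k z^k = \Bigl(1 + \sum_{k\geq 1} \varrho_k z^k\Bigr)^{g/(bc)}
\]
in an auxiliary variable $z$. Applying the generalized binomial theorem to the right-hand side and extracting $[z^k]$ gives the first displayed formula for $\tau_k$, while the identity
\[
\binom{g/(bc)}{\ell} = \frac{1}{\ell!\,(bc)^\ell}\prod_{j=0}^{\ell-1}(g - jbc) = \frac{g}{\ell!\,(bc)^\ell}\prod_{j=1}^{\ell-1}(g - jbc)
\]
gives the second. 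Because $\binom{g/(bc)}{\ell}$ is a polynomial in $g$ of degree exactly $\ell$ with coefficients in $\mathbb{Q}[b,c]$ and $\varrho_k \in \mathbb{Q}[b,c]$ by Part (1), we obtain $\tau_k \in \mathbb{Q}[b,c,g]$; its top $g$-degree summand comes only from $\ell = k$ (forcing all $k_i = 1$) and equals $\varrho_1^k/(k!(bc)^k)$, which is nonzero by the positivity of Part (1).

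Parts (3) and (4) are then immediate bookkeeping. For Part (3), only the $\ell = n$ term in Part (2) contributes to the coefficient of $g^n$, so
\[
\tau_k(n) = \frac{1}{n!\,(bc)^n} \sum_{k_1 + \cdots + k_n = k} \varrho_{k_1}\cdots \varrho_{k_n},
\]
and since each $\varrho_{k_i}/(bc) \in \mathbb{Q}[b,c]$ has positive leading term $c^{k_i d - 1}b^{k_i e - 1}$ by Part (1), summing over the $\binom{k-1}{n-1}$ positive compositions of $k$ into $n$ parts yields a positive leading term $b^{ke-n}c^{kd-n}$. Part (4) follows immediately from the same formula together with $\tau_{k_i}(1) = \varrho_{k_i}/(bc)$. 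The principal obstacle is Part (1): specifying the quiver and stability condition so that the $\lambda(\pt_1,\pt_2)$ arise manifestly as Euler characteristics and verifying the non-vanishing needed for the leading term; once this is done, Parts (2)--(4) are routine.
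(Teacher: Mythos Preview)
Your deduction of Part~(2) from the change-of-lattice identity matches the paper, but there are genuine gaps in Parts~(1), (3), and~(4).

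\textbf{Part (1).} The paper does not use quiver moduli here; it specializes the universal wall-function formula of \cite{BLM2} (\Cref{thm: universal coefficient}), where $\lambda(\pt_1,\pt_2)$ is a count of \emph{tight gradings}. Your quiver sketch is plausible in spirit, but your witness for positivity of the leading coefficient is wrong: you claim $\lambda\bigl((1^{kd}),(1^{ke})\bigr)>0$, yet already for $d=e=1$, $k=2$ the appendix table gives $\lambda\bigl((1,1),(1,1)\bigr)=0$. The paper instead invokes \Cref{cor: length one partitions}, which gives $\lambda\bigl((kd),(ke)\bigr)=\gcd(kd,ke)=k>0$ for the length-one pair.

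\textbf{Parts (3) and (4).} Your key assertion that ``only the $\ell=n$ term in Part~(2) contributes to the coefficient of $g^n$'' is false: $\binom{g/(bc)}{\ell}$, viewed as a polynomial in $g$, has nonzero $g^n$-coefficient for every $n\le \ell$ (signed Stirling numbers of the first kind), so all $\ell\ge n$ contribute. In particular your formula $\tau_k(n)=\tfrac{1}{n!(bc)^n}\sum\varrho_{k_1}\cdots\varrho_{k_n}$ and the consequence $\tau_k(1)=\varrho_k/(bc)$ are incorrect for $k\ge 2$ (e.g.\ $\tau_2(1)=\varrho_2/(bc)-\varrho_1^2/(2bc)$). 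The paper obtains Part~(4) differently, writing $F^{g/(bc)}=\exp\bigl(g\cdot\tfrac{\ln F}{bc}\bigr)$ and expanding in powers of $g$; this gives $\sum_k\tau_k(1)x^k=\tfrac{\ln F(x)}{bc}$ and Part~(4) follows. For Part~(3), because $\tau_k(1)$ is an alternating combination of products of $\varrho_m$'s, positivity of its leading term does \emph{not} follow from Part~(1); the paper supplies this via a separate tight-grading counting argument (\Cref{lem: b c degree tight gradings}), and only then deduces Part~(3) for general $n$ from Part~(4).
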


In particular, we have the following consequences:
\begin{itemize}
    \item \Cref{C1,C2} are implied by part (2) of \Cref{thm: main 1 intro}.
    \item \Cref{C3,C14} are implied by part (3) of \Cref{thm: main 1 intro}.
    \item \Cref{C5,C6} are obtained using \Cref{thm: main 1 intro}(2) in the case $k=1$ and computing the only $\lambda(\pt_1, \pt_2)$ appearing in \eqref{eq: tvco main}, where both partitions are of length one (\Cref{cor: length one partitions}).
    \item \Cref{C15,C18} are special cases of \Cref{thm: main 1 intro}(4) when $n=k$ and $n=k-1$, respectively.
    \item \Cref{C16,C17} follow from combining \Cref{thm: main 1 intro}(4) with a case study in \Cref{ex: d=1 e=j}.
\end{itemize}

Both $\lambda(\pt_1, \pt_2)$ and $\tau_k$ admit enumerative interpretations in terms of \emph{tight gradings}; see \Cref{section: tight grading} for a detailed treatment. Tight gradings were introduced in \cite{BLM, BLM2} to give a combinatorial interpretation of rank-$2$ scattering diagram coefficients. Each tight grading is assignment of nonnegative integer weights to the edges of a Dyck path satisfying certain conditions (see \Cref{def: tight grading}).  They form a subset of the compatible gradings defined by Lee--Li--Zelevinsky \cite{LLZ} (and extended by Rupel \cite{Rupgengreed}) to study the \emph{greedy bases} of (generalized) cluster algebras.  The conditions under which a compatible grading is tight are directly motivated by rank-$2$ cluster scattering diagrams.  Thus tight gradings serve as a bridge between the Lee--Schiffler proof of Laurent positivity using Dyck path combinatorics \cite{LSpositive} and the Gross--Hacking--Keel--Kontsevich proof via scattering diagrams \cite{GHKK}. 

The interpretation of $\lambda(\pt_1, \pt_2)$ and $\tau_k$ using tight gradings plays an important role in our analysis in \Cref{section: proof}.  In \Cref{picto_tight}, we give a new description of tight gradings as arrangements of rectangular tiles, called \emph{footprints}.  A similar tiling interpretation was outlined in \cite{BLM2}, but the footprint framework admits more tiles and we believe better captures the relative ``tightness'' of tight gradings.

Quiver moduli are algebraic varieties parametrizing isomorphism classes of stable representations of quivers. Despite being interesting in their own right for encoding hard representation-theoretic classification problems, it turned out that their topological invariants encode enumerative invariants such as Gromov-Witten and Donaldson-Thomas invariants. A typical example is the refined GW/Kronecker correspondence \cite{RW}: both types of geometric invariants are encoded in the so-called tropical vertex, now called a rank $2$ (generalized) cluster scattering diagram (see the following Section \ref{section: change of lattice}). This allows us to translate the above conjectures into statements on Euler characteristics of quiver moduli spaces, and to readily apply results obtained in \cite{RW} (in fact, we will therefore never need the precise definition of (stable) quiver representations and their moduli spaces). This method is particularly helpful in obtaining closed formulas for $\tau(i, j)$, as in \Cref{C5} through \Cref{C10}. In particular, Conjectures \Cref{C7,C8} above are equivalent to a conjecture of Gross and Pandharipande \cite[Conjecture (1,4)]{GP}, proved in \cite[Theorem 9.4]{RW}.

The next theorem lists the subset of conjectures of Elgin--Reading--Stella that we are able to prove combining the aforementioned techniques.

\begin{thm}\label{thm: main 2 intro}
    \emph{\Cref{C1}, \Cref{C2}, \Cref{C3}, \Cref{C5}-\Cref{C10}, \Cref{C14}-\Cref{C18}} are true.
\end{thm}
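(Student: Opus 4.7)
The plan is to organize the eleven listed conjectures into two groups according to the technique that dispatches them, then glue the proofs together at the end. The first group, consisting of \Cref{C1}, \Cref{C2}, \Cref{C3}, \Cref{C5}, \Cref{C6}, \Cref{C14}, \Cref{C15}, \Cref{C16}, \Cref{C17}, \Cref{C18}, is structural and follows by direct translation of \Cref{thm: main 1 intro}. The second group, consisting of the closed-form identities \Cref{C7}, \Cref{C8}, \Cref{C9}, \Cref{C10}, requires computing specific Euler characteristics of quiver moduli after a passage through the change-of-lattice isomorphism of \Cref{section: change of lattice} relating $\frakD_{(b,c)}$ to $\frakD^\circ_{(c,b)}$.

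For the first group, the dictionary set up in the introduction is essentially the proof. Write $\gcd(i,j)=k$ and $(i,j)=(kd,ke)$ with $\gcd(d,e)=1$. The expansion in \Cref{thm: main 1 intro}(2) exhibits $\tau_k$ as an element of $\mathbb{Q}[b,c,g]$ with $g$ as a factor and with top degree $k=\gcd(i,j)$ in $g$, which is exactly \Cref{C1} and \Cref{C2}. The leading term $b^{ke-n}c^{kd-n}$ extracted from \Cref{thm: main 1 intro}(3) gives the degrees-in-$b$ and degrees-in-$c$ required by \Cref{C3} and the non-vanishing top monomial required by \Cref{C14}. For \Cref{C5} and \Cref{C6} I would specialize \Cref{thm: main 1 intro}(2) to $k=1$ and use the length-one computation of $\lambda(\pt_1,\pt_2)$ to obtain the stated binomial coefficients. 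Next, \Cref{C15} is the $n=k$ instance of \Cref{thm: main 1 intro}(4), \Cref{C18} is the $n=k-1$ instance, and \Cref{C16}, \Cref{C17} follow by combining \Cref{thm: main 1 intro}(4) with the explicit case analysis carried out for $(d,e)=(1,j)$ and $(d,e)=(i,1)$ in \Cref{ex: d=1 e=j} and its counterpart.

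For the second group, the plan is to reinterpret $\tvco(kd,ke)$ as an Euler characteristic of a moduli space of stable representations of the generalized Kronecker quiver with dimension vector $(kd,ke)$, using the framework recalled in the introduction. \Cref{C7} and \Cref{C8} then become the Gross--Pandharipande conjecture \cite[Conjecture (1,4)]{GP}, which is a theorem of Reineke--Weist \cite[Theorem 9.4]{RW}; after converting from $\tvco$ back to $\tau$ via \Cref{thm: main 1 intro}(2) and keeping track of the $b \leftrightarrow c$ swap in the change-of-lattice transformation, I recover the two closed forms exactly as stated. For \Cref{C9} and \Cref{C10} I would apply the same generating-function machinery to the nearly-diagonal dimension vectors $(i,i-1)$ and $(j-1,j)$: the requisite Euler characteristic evaluates to the claimed binomial $\tfrac{g}{i(ib-i+1)}\binom{(ib-i+1)(c-1)}{i-1}$ by an argument parallel to the diagonal case.

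The main obstacle will be \Cref{C9} and \Cref{C10}. Unlike the diagonal case, these identities are not already in the literature as named theorems, so the moduli computation has to be carried out in full, and one must verify that the tight-grading count on the combinatorial side agrees with the quiver-theoretic count on the geometric side. A secondary subtlety throughout is bookkeeping: the translation from $\frakD_{(b,c)}$ to $\frakD^\circ_{(c,b)}$ swaps the roles of $b$ and $c$ and reindexes the ray from $\mathbb{R}_{\leq 0}(db,ec)$ to $\mathbb{R}_{\leq 0}(d,e)$, so every application of \Cref{thm: main 1 intro} to one of the ERS conjectures must be preceded by a careful matching of conventions.
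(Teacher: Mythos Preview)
Your overall plan matches the paper's almost exactly: the first group of conjectures is derived from \Cref{thm: main 1 intro} precisely as you outline, and the second group is handled via quiver moduli and the change-of-lattice transformation.

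Your one substantive misconception concerns \Cref{C9} and \Cref{C10}. You flag these as the main obstacle, writing that ``these identities are not already in the literature as named theorems, so the moduli computation has to be carried out in full.'' In fact they are: the paper simply cites \cite[Theorem~9.4]{RW}, which computes the relevant Euler characteristic $\chi_{(i,i-1)}(1)$ (and its dual) directly, yielding the binomial formulas for $\tau(i,i-1)$ and $\tau(j-1,j)$ after the translation $\tau(i,j)=\tfrac{g}{bc}\chi_{(i,j)}(1)$ for coprime $i,j$. So no new moduli computation is required; \Cref{C9} and \Cref{C10} come for free from the same Reineke--Weist theorem that you correctly invoke for \Cref{C7} and \Cref{C8}. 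Conversely, for \Cref{C7} and \Cref{C8} the paper does not merely cite \cite[Theorem~9.4]{RW} but reproves the central-slope formula via a functional equation for $S_{(1,1)}(t)$ and Lagrange inversion (using \cite[Theorem~1.4]{RNCH}); your shortcut of appealing to the Gross--Pandharipande equivalence is valid but less self-contained.
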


\begin{rem}
    Ryota Akagi \cite{Akagi2} has recently given proofs of \Cref{C1}, \Cref{C2}, \Cref{C3}, \Cref{C5}, \Cref{C6}, \Cref{C15} and obtained partial results on \Cref{C14}, \Cref{C16}, \Cref{C17}, \Cref{C18}. His method is different from ours and builds on his earlier work \cite{Akagi} on dilogarithm identities. He further proves \Cref{C11} \cite[Proposition 4.3]{Akagi2}, which our method does not address.
\end{rem}

Note that Conjectures \Cref{C4}, \Cref{C12}, and \Cref{C13} remain unsolved. In fact, the conjectural formula \Cref{C13} is mysterious from both the quiver moduli and tight grading points of view.

Finally, we give an answer to \Cref{problem_of_ERS}, using the Lee--Li--Zelevinsky mutation operation on gradings \cite{LLZ} and a retraction operation, both discussed in \Cref{section: weyl symmetry}.

\begin{thm}\label{thm: weyl symmetry}
    The composition of mutation and retraction on tight gradings is a bijection exhibiting the Weyl group symmetry, namely that
    $$\tau(\ii,\jj) = \tau(\ii,b\ii-\jj) \text{ and } \tau(\ii,\jj) = \tau(c\jj - \ii,\jj)\,.$$
\end{thm}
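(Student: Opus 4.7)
The strategy is to realize the two Weyl symmetries $\tau(\ii,\jj) = \tau(\ii, b\ii-\jj)$ and $\tau(\ii,\jj) = \tau(c\jj - \ii,\jj)$ through explicit bijections on sets of tight gradings, since by the combinatorial interpretation developed in \Cref{section: tight grading} the coefficient $\tau(\ii,\jj)$ is a weighted count of tight gradings attached to the $(\ii,\jj)$-Dyck path. The two identities are symmetric in the roles of the coordinates, so it suffices to construct one such bijection, say $\Phi \colon \mathrm{TG}(\ii,\jj) \to \mathrm{TG}(\ii, b\ii-\jj)$, and obtain the other by relabelling.

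The plan is to realize $\Phi$ as the composition $R \circ \mu$ announced in the theorem, in three steps. First, I recall from \cite{LLZ} that the mutation $\mu$ is a bijection on compatible gradings that sends compatible gradings of the $(\ii,\jj)$-Dyck path to those of the $(\ii, b\ii-\jj)$-Dyck path, and track its effect on the underlying edge data. Second, I define the retraction $R$ as a canonical projection from compatible gradings onto tight gradings, most naturally as an iterated cancellation procedure on the footprint decomposition of \Cref{picto_tight} that removes exactly those local configurations obstructing tightness. Third, I verify that the composition $R \circ \mu$, restricted to $\mathrm{TG}(\ii,\jj)$, takes values in $\mathrm{TG}(\ii, b\ii-\jj)$, and exhibit a two-sided inverse of the same shape by applying mutation in the reverse direction followed by the analogously defined retraction.

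The main obstacle is the interplay between $\mu$ and tightness. The LLZ mutation was designed to preserve the compatibility condition of \cite{LLZ}, but tightness, as introduced in \cite{BLM, BLM2} and motivated by the combinatorics of the cluster scattering diagram, is a strictly finer property, so a generic mutation will destroy it by producing local tightness defects in $\mu(G)$. For $R \circ \mu$ to be a genuine bijection on tight gradings one must show that these defects are localized, canonically resolvable by $R$, and that no information is lost in the process, so that the opposite composition inverts it on the nose. I expect the core of the argument to consist of a case analysis tracing how the footprints of a tight grading transform under $\mu$, in particular in the neighborhoods where $\mu$ most visibly reshapes the Dyck path, together with a verification that each potential defect produced by $\mu$ is resolved uniquely by a local move of $R$. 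Once the bijection is established, the numerical identities $\tau(\ii,\jj) = \tau(\ii, b\ii-\jj)$ and $\tau(\ii,\jj) = \tau(c\jj - \ii,\jj)$ are immediate from the tight-grading interpretation of $\tau$.
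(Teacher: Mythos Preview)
Your high-level strategy---mutation followed by retraction---matches the paper's, but your conception of retraction is wrong and the anticipated proof mechanism would not close the argument. The retraction $\varrho$ in the paper is not a footprint-based ``iterated cancellation procedure that removes local configurations obstructing tightness''; it is the single, non-iterative map $(S_\east, S_\north) \mapsto (S_\east, S_\north \cap \sh(S_\east))$, which lands in the set of shadowed gradings $\CGbc^\vdash$. The footprints of \Cref{picto_tight} play no role. You also conflate the Dyck path parameters with the grading parameters: mutation $\theta$ sends gradings on $\calP(d_1,d_2)$ to gradings on $\calP(bd_2 - d_1, d_2)$; the pair $(\ii,\jj)$ records the sizes $|S_\north|, |S_\east|$, not the dimensions of the path.

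The real content is numerical and structural rather than a footprint case analysis. Under $\theta$, the vertical support $S_\north$ of size $\ii$ is replaced by its reflected complement of size $d_2 - \ii$, while $|S_\east| = \jj$ is preserved. Retraction then cuts the new $S_\north'$ down to $S_\north' \cap \sh(S_\east')$, and \Cref{lem: mutation retraction compatible} shows this has size exactly $c\jj - \ii$; together with the check $b(c\jj-\ii)d_2 - c\jj(bd_2-d_1) = -\gcd(b\ii,c\jj)$ this yields a tight grading in $\CGbc^\vdash(bd_2-d_1, d_2, c\jj-\ii, \jj)$. The reason $\varrho \circ \theta$ loses no information is \Cref{lem: tight grading bounding labels}: for a tight grading in $\CGbc^\top$ one has $\omega(v_{\min}) = 0$, so the vertical edges discarded by retraction were already forced by the rest of the data. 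Injectivity then follows from \Cref{prop: compatibility mutation} together with this lemma, and bijectivity from running the symmetric construction in the opposite direction. Your proposed case analysis on footprint transformations is neither needed nor the right lens; the structural lemmas about $u_{\max}$, $v_{\max}$, $v_{\min}$ do all the work.
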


A consequence of \Cref{thm: weyl symmetry} is a new combinatorial proof of the invariant structure of $\frakD_{(b,c)}$ described in \cite[Example 1.15]{GHKK}. Note that the proof of the interpretation of $\tau(i,j)$ in terms of tight gradings in \cite[Section 5.5]{BLM2} does not rely on this invariant structure.

\addtocontents{toc}{\protect\setcounter{tocdepth}{-1}}
\section*{Acknowledgments}
\addtocontents{toc}{\protect\setcounter{tocdepth}{1}}
AB was partially supported by NSF grant DMS-2503411. KL was supported by the University of Alabama, Korea Institute for Advanced Study, the NSF grant DMS 2302620, and the Simons Foundation grant SFI-MPS-SFM-00010895. G.M. was partially funded by Simons Foundation Travel Support for Mathematicians.  The authors are grateful to Ryota Akagi and Salvatore Stella for their correspondences and helpful discussions.

\section{Rank 2 scattering diagrams and the change-of-lattice}\label{section: change of lattice}

We briefly recall some basics of  rank-2 (generalized) cluster scattering diagrams.

Fix a base field $\Bbbk$ of characteristic zero. For $i=1,2$, take power series 
\[
    P_i(x_i) = 1 + \sum_{k\geq 1} p_{i,k}x_i^k \in \Bbbk\llbracket x_i\rrbracket  \quad \text{where} \quad p_{i,k}\in \Bbbk.
\]
By a result of Kontsevich--Soibelman \cite{KS}, the two power series $P_1$ and $P_2$ uniquely determine a \emph{consistent scattering diagram}
\begin{multline}\label{eq: universal sd}
    \frakD(P_1, P_2) = \{(\mathbb{R}(1,0), P_1),(\mathbb{R}(0,1), P_2)\}  \\ 
    \cup \{(\mathbb{R}_{\leq 0}(v_1,v_2), \fclu{v_1}{v_2})\mid (v_1,v_2)\in \mathbb{Z}_{>0}^2,\gcd(v_1,v_2)=1\}.
\end{multline}
Every element in $\frakD(P_1,P_2)$, referred to as a \emph{wall}, is a pair $(\frakd, \frakf)$ where $\frakd$ is either a line or a ray in $\mathbb{R}^2$ and $\frakf$ (the \emph{wall-function}), when $\frakf=\fclu{v_1}{v_2}$, is a power series in $\Bbbk\llbracket x_1^{v_1}x_2^{v_2}\rrbracket $ with constant term $1$. We refer the reader to \cite[Section 1.3]{GPS} for the algebraic details of the \emph{consistency} of scattering diagrams and to \cite[Section 4.1]{BLM} for a presentation closer to the current setup.

The primary focus of this paper is the case where
\[
    P_1(x_1) = 1 + x_1^b \quad \text{and} \quad P_2(x_2) = 1 + x_2^c
\]
for positive integers $b$ and $c$. We denote this scattering diagram by $\frakD_{(b,c)}$. They are known as the rank-2 special class of the \emph{cluster scattering diagrams} of Gross--Hacking--Keel--Kontsevich \cite{GHKK}, and are closely tied with Fomin--Zelevinsky's cluster algebras $\mathcal A(b,c)$ in rank 2 \cite{Zel}.

For $\frakD_{(b,c)}$, it is also known that each $\fclu{v_1}{v_2}$ lives in $\Bbbk\llbracket x_1^{db}x_2^{ec}\rrbracket $ for the unique pair
\[
    (d,e)\in \mathbb{Z}_{>0}^2 \quad \text{such that}\quad \gcd(d,e)=1 \quad \text{and} \quad v_1ec=v_2db.
\]
Therefore we can write
\begin{equation}\label{eq: def of tau(i,j)}
    \fclu{v_1}{v_2} = 1 + \sum_{k\geq 1} \tau^{(b,c)}(kd, ke) x_1^{kdb}x_2^{kec},
\end{equation}
defining the coefficients $\tau(kd, ke)=\tau^{(b,c)}(kd,ke)$, viewed as functions of $b$ and $c$.

\begin{exmp}
Let $b = 2$ and $c = 3$. 

If $(v_1, v_2) = (1, 1),$ 
then $(d,e) = (3,2)$ and 
\[
    \mathfrak{f}_{\mathbb{R}_{\leq 0}(1, 1)} = 1 + \tau(3, 2)x_1^{6}x_2^{6} + \tau(6, 4)x_1^{12}x_2^{12} + \cdots.
\]

On the other hand, if $(v_1, v_2) = (2, 3),$ then $(d, e) = (1, 1)$, and
\[
    \mathfrak{f}_{\mathbb{R}_{\leq 0}(2, 3)} = 1 + \tau(1, 1)x_1^{2}x_2^{3} + \tau(2, 2)x_1^{4}x_2^{6} + \cdots.
\]

As another example, if $(v_1, v_2) = (3, 5),$
then $(d, e) = (9, 10)$, and
\[
    \mathfrak{f}_{\mathbb{R}_{\leq 0}(3, 5)} = 1 + \tau(9,10)x_1^{18}x_2^{30} + \tau(18, 20)x_1^{36}x_2^{60} + \cdots.
\]
In all three of these cases, note that both $(v_1, v_2)$ and $(d, e)$ are primitive such that $\frac{v_1}{v_2} = \frac{db}{ec}$.
\end{exmp}

A closely related scattering diagram is
\[
    \frakD^\circ_{(c,b)} \coloneqq \frakD((1+x_1)^c, (1+x_2)^b).
\]
(Notice the swap between $b$ and $c$.) For positive $d$ and $e$ with $\gcd(d,e)=1$, denote the wall-function on $\mathbb{R}_{\leq 0}(d,e)$ by
\begin{equation}\label{eq: wall-func gps}
    \ftrop{d}{e} = 1 + \sum_{k\geq 1}\tvco^{(c,b)}(kd,ke)x_1^{kd}x_2^{ke}\in \Bbbk\llbracket x_1^{d}x_2^{e}\rrbracket ,
\end{equation}
defining the coefficients $\tvco(kd, ke) = \tvco^{(c,b)}(kd,ke)$, viewed as functions of $c$ and $b$.

As pointed out in \cite[Example 1.15]{GHKK} (and stated in \Cref{prop: change of lattice}), the scattering diagram $\frakD^\circ_{(c,b)}$ can be transformed into $\frakD_{(b,c)}$ by the \emph{change-of-lattice trick}, given in \cite[Proof of Proposition C.13, Step IV]{GHKK}. A more detailed proof of \Cref{prop: change of lattice} can be found in \cite[Proposition 1.12]{GL}.

\begin{prop}[Change-of-lattice]\label{prop: change of lattice}
    For each $(d,e)\in \mathbb{Z}_{>0}^2$, define 
    \[
        g=g(d,e,b,c)\coloneqq \gcd(db,ec)/\gcd(d,e).
    \]
    Then we have for $\gcd(d, e) = 1$,
    \[
        \ftrop{d}{e} = \left(1 + \sum_{k\geq 1}\tau(kd,ke)x_1^{kd}x_2^{ke}\right)^{bc/g}.
    \]
\end{prop}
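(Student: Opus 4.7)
The plan is to apply the change-of-lattice trick of \cite[Proof of Proposition C.13, Step IV]{GHKK}, whose detailed implementation appears in \cite[Proposition 1.12]{GL}. The central principle is that consistent scattering diagrams on two lattices of finite index are related by a canonical procedure under which, on each ray, the wall-function transforms by raising to an integer power determined by the relative primitive vectors of the two lattices along that ray.

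First, I would set up the lattices. Both $\frakD_{(b,c)}$ and $\frakD^\circ_{(c,b)}$ live on $N = \mathbb{Z}^2$ with basis $e_1, e_2$, but they admit natural reinterpretations relative to the sublattice $N' := b\mathbb{Z} e_1 \oplus c\mathbb{Z} e_2 \subset N$ of index $bc$. The incoming data $1 + x_1^b,\, 1 + x_2^c$ of $\frakD_{(b,c)}$ already lies in $N'$, and $\frakD_{(b,c)}$ may be viewed as the pushforward of a simpler diagram on $N'$ under the lattice endomorphism $\phi : N \to N$ with $\phi(e_1) = be_1$, $\phi(e_2) = ce_2$. Meanwhile, the incoming wall-functions $(1+x_1)^c,\, (1+x_2)^b$ of $\frakD^\circ_{(c,b)}$ genuinely use $N$-primitive monomials. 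A two-step comparison between $\frakD_{(b,c)}$ and $\frakD^\circ_{(c,b)}$, mediated by this inclusion $N' \subset N$, is what the change-of-lattice trick provides.

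Next, I would track the wall-function on a ray $\mathbb{R}_{\leq 0}(d, e)$ with $\gcd(d, e) = 1$. Under $\phi$, this ray corresponds to $\mathbb{R}_{\leq 0}(db, ec)$ with primitive form $(v_1, v_2) = (db/g, ec/g)$, where $g = \gcd(db, ec)$, and the wall-function $\fclu{v_1}{v_2}$ records the coefficients $\tau(kd, ke)$ attached to the monomials $x_1^{kdb} x_2^{kec}$. Reindexing those coefficients via the substitution $x_1^{kdb} x_2^{kec} \rightsquigarrow x_1^{kd} x_2^{ke}$ produces the series $F(x_1, x_2) := 1 + \sum_{k \geq 1} \tau(kd, ke)\, x_1^{kd} x_2^{ke}$ appearing on the right-hand side of the proposition. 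The change-of-lattice trick then identifies $\ftrop{d}{e}$ with $F^{bc/g}$, where the exponent is the ratio of the global lattice index $[N : N'] = bc$ to the local index $g$ of the ray.

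The main obstacle is the precise justification of the exponent $bc/g$: this requires careful tracking of primitive lattice vectors along each ray, together with a verification that scattering-diagram consistency is preserved under the lattice transformation. These technical ingredients are worked out in detail in \cite[Proposition 1.12]{GL}; alternatively, the identity can be cross-checked via the enumerative formulas of \cite{GPS, RW} connecting wall-functions to relative Gromov-Witten invariants and Euler characteristics of quiver moduli.
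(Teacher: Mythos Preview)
Your proposal is correct and follows essentially the same approach as the paper: the paper itself does not give a self-contained proof of this proposition but simply defers to \cite[Proof of Proposition C.13, Step IV]{GHKK} for the change-of-lattice trick and to \cite[Proposition 1.12]{GL} for a detailed implementation, which is precisely what you do. Your sketch of the lattice setup and the origin of the exponent $bc/g$ is a helpful expansion on those references, but the underlying argument is the same.
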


This is an important tool that we use throughout the paper, especially in \Cref{section: wall function quiver moduli,section: proof}.

\section{Wall-functions via quiver moduli}\label{section: wall function quiver moduli}

In this section, we prove \Cref{C1}, \Cref{C5}, \Cref{C6}, \Cref{C7}, \Cref{C8}, \Cref{C9}, and \Cref{C10} by the quiver method. 

Namely, Proposition~\ref{prop: change of lattice} allows us to translate between the notation and conventions of \cite{ERS} and of \cite{RW}. 
We define $S_{(d,e)}(t)$ by
$$S_{(d,e)}(t)=\sum_{k\geq 0}\tau(kd,ke)t^k\in\mathbb{Q}\llbracket t\rrbracket $$
and readily find
\begin{cor}\label{cor20250702n1} We have
    $$S_{(d,e)}(t)=f_{(d,e)}(t)^{g/bc}.$$
\end{cor}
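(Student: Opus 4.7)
The plan is to observe that this corollary is a direct consequence of the change-of-lattice proposition (\Cref{prop: change of lattice}) together with the single-variable substitution $t = x_1^d x_2^e$. First, I would note that by \eqref{eq: wall-func gps}, the wall-function $\ftrop{d}{e}$ lies in $\Bbbk\llbracket x_1^d x_2^e\rrbracket$, so under the substitution $t = x_1^d x_2^e$ it becomes exactly a power series in the single variable $t$; this is precisely the series $f_{(d,e)}(t)$ from the conventions of \cite{RW}. Explicitly, $f_{(d,e)}(t) = 1 + \sum_{k\geq 1}\tvco(kd,ke)\, t^k$.

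Next, I would apply \Cref{prop: change of lattice}, which gives
\[
    \ftrop{d}{e} = \left(1 + \sum_{k\geq 1}\tau(kd,ke)\, x_1^{kd}x_2^{ke}\right)^{bc/g}.
\]
Performing the same substitution $t = x_1^d x_2^e$ on the right-hand side and using the definition of $S_{(d,e)}(t)$, this identity becomes
\[
    f_{(d,e)}(t) = S_{(d,e)}(t)^{bc/g}.
\]
Finally, since both $f_{(d,e)}(t)$ and $S_{(d,e)}(t)$ have constant term $1$, the binomial series makes any rational power of either side a well-defined element of $\mathbb{Q}\llbracket t\rrbracket$, and raising the above equation to the $g/bc$ power yields the desired $S_{(d,e)}(t) = f_{(d,e)}(t)^{g/bc}$.

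There is essentially no real obstacle here: the corollary is a bookkeeping translation between the cluster-algebraic convention (wall-function indexed by the primitive lattice direction $(db, ec)$ in the $\frakD_{(b,c)}$ picture) and the tropical vertex / quiver moduli convention (wall-function indexed by $(d,e)$ in the $\frakD^\circ_{(c,b)}$ picture). The only point requiring care is checking that the substitution $t = x_1^d x_2^e$ is consistent on both sides of the change-of-lattice identity, which is immediate since both factors live in $\Bbbk\llbracket x_1^d x_2^e\rrbracket$, and that extracting the $g/bc$-th root is legitimate, which follows from the unit constant terms.
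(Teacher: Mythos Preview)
Your proof is correct and follows essentially the same approach as the paper: both apply \Cref{prop: change of lattice} and pass to a single variable $t$ via the substitution $x_1^d x_2^e = t$ (the paper phrases the other side as $\fclu{db}{ec}\vert_{x_1^{db}x_2^{ec}=t}$, which amounts to the same thing). Your added remark that the $g/bc$-th root is well-defined because both series have constant term $1$ is a nice bit of extra care, but otherwise the arguments coincide.
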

\begin{proof}
Set $l_1=b$, $l_2=c$ , and consider the series $f_{(d,e)}(t)\in\mathbb{Q}\llbracket t\rrbracket $ appearing in the tropical vertex factorization 
\cite[Section 2, (1)]{RW}, where we specialize all $s_k$ and all $t_l$ to $t$, and specialize $x$ and $y$ to $1$. Then $f_{(d,e)}(t) = \ftrop{d}{e}\vert_{x_1^dx_2^e = t}$ while $S_{(d,e)}(t) = \fclu{db}{ec}\vert_{x_1^{db}x_2^{ec} = t}$. Proposition~\ref{prop: change of lattice}  implies the desired identity.
\end{proof}

For the details of the following notions, we refer to \cite{RW} (in fact, as mentioned earlier, we do not need these notions in detail here). We consider the complete bipartite quiver $K(b,c)$ with arrows from a set of $c$ vertices to a set of $b$ vertices. Given a dimension vector $(\mathbb{P}_1,\mathbb{P}_2)$ with $\mathbb{P}_1=(p_{1,1},\ldots,p_{1,b})$, $\mathbb{P}_2=(p_{2,1},\ldots,p_{2,c})$ for $K(b,c)$, let $M^{\rm st}(\mathbb{P}_1,\mathbb{P}_2)$ be the moduli space parametrizing isomorphism classes of stable representations of $K(b,c)$ of dimension vector $(\mathbb{P}_1,\mathbb{P}_2)$ with respect to the non-trivial symmetric stability, and define the following sum of topological Euler characteristics:
$$\chi_{(d,e)}(k)=\sum\chi(M^{\rm st}(\mathbb{P}_1,\mathbb{P}_2)),$$
the sum ranging over all $\mathbb{P}_1$ summing up to $kd$, and all $\mathbb{P}_2$ summing up to $ke$.

Then \cite[Theorem 8.1]{RW} immediately implies  that the series $f_{(d,e)}(t)$ is determined by a functional equation:

\begin{thm}\label{thm20250702n1} The series $f_{(d,e)}(t)\in\mathbb{Q}\llbracket t \rrbracket$ is determined by
   $$f_{(d,e)}(t)=\prod_{k\geq 1}(1-(tf_{(d,e)}(t)^E)^k)^{-k\chi_{(d,e)}(k)},$$ 
where $E=(bcde-bd^2-ce^2)/bc$.
\end{thm}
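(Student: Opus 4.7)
The plan is to extract the functional equation as a direct specialization of \cite[Theorem 8.1]{RW}. That reference expresses $f_{(d,e)}$ as an infinite product indexed by dimension vectors $(\mathbb{P}_1,\mathbb{P}_2)$ for the bipartite quiver $K(b,c)$, where separate variables $s_1,\dots,s_c$ and $t_1,\dots,t_b$ are attached to the vertices on the two sides, the exponents are given by the components of $(\mathbb{P}_1,\mathbb{P}_2)$, and the multiplicity of each factor is the Euler characteristic $\chi(M^{\rm st}(\mathbb{P}_1,\mathbb{P}_2))$. First I would apply the specialization already invoked in the proof of Corollary~\ref{cor20250702n1}, namely $s_j,t_l\mapsto t$ and $x=y=1$. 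Under this substitution, all factors with $|\mathbb{P}_1|=kd$ and $|\mathbb{P}_2|=ke$ collapse to the same monomial in $t$, so summing over $(\mathbb{P}_1,\mathbb{P}_2)$ with prescribed totals replaces the individual Euler characteristics by the aggregated count $\chi_{(d,e)}(k)$ defined just before the theorem.

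Next I would account for the two quantitative features of the functional equation: the factor of $k$ in the exponent $-k\chi_{(d,e)}(k)$, and the replacement $t \mapsto t\,f_{(d,e)}(t)^E$ inside each factor. The former is the standard plethystic rearrangement that converts a product indexed by multiples $k(d,e)$ of a primitive ray into one indexed by primitives; the latter reflects the self-twist intrinsic to the tropical vertex, in which the wall-function on a ray contributes back to the variables on that same ray through the Euler form of $K(b,c)$. A direct computation of this Euler form on $(d,e)$ yields $bcde$ from the $bc$ arrows between the two sides, minus $bd^2$ and $ce^2$ from the two vertex self-pairings; dividing by $bc$ to pass from the bipartite lattice to the reduced rank-$2$ coordinates recovers exactly $E = (bcde-bd^2-ce^2)/bc$. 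Because $f_{(d,e)}$ has constant term $1$, the fractional power $f_{(d,e)}^E$ is well-defined as a formal power series, so no convergence or integrality issue arises.

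Uniqueness is automatic: since $tf_{(d,e)}(t)^E = t + O(t^2)$, the right-hand side of the functional equation determines the coefficients of $f_{(d,e)}(t)$ one degree at a time, so the identity indeed \emph{determines} $f_{(d,e)}$ as claimed. The main obstacle is the conventional bookkeeping between the setup of \cite{RW} and the present paper: one must match the stability condition, the orientation of arrows, and the normalization introduced by the change-of-lattice in Proposition~\ref{prop: change of lattice} so as to verify that the self-twist exponent $E$ emerges with the stated sign and denominator, and that the $bc/g$ power appearing in the cluster-side wall-function from Proposition~\ref{prop: change of lattice} is consistent with the exponent structure of \cite[Theorem 8.1]{RW}. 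Once these conventions are aligned, the functional equation follows with no further computation.
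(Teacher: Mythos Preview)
Your approach is exactly the paper's: the authors simply state that \cite[Theorem 8.1]{RW} immediately implies the functional equation, with no further argument given. Your proposal is a more expanded account of the same citation, spelling out the specialization $s_j,t_l\mapsto t$, $x=y=1$ and the origin of the exponent $E$, so it is correct and follows the same route.
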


Combining the previous results, we note:

\begin{cor}\label{lem20250702n2}
    $S_{(d,e)}(t)$ is determined by the functional equation
$$S_{(d,e)}(t)=\prod_{k\geq 1}(1-(tS_{(d,e)}(t)^{bcE/g})^k)^{-k\chi_{(d,e)}(k)g/bc}.$$
\end{cor}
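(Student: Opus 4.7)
The plan is a direct substitution: both preceding results are already in hand, and the corollary is obtained by eliminating $f_{(d,e)}(t)$ from the functional equation of Theorem~\ref{thm20250702n1} using the identity of Corollary~\ref{cor20250702n1}. Concretely, Corollary~\ref{cor20250702n1} gives $S_{(d,e)}(t)=f_{(d,e)}(t)^{g/bc}$, equivalently $f_{(d,e)}(t)=S_{(d,e)}(t)^{bc/g}$, where the fractional power is well-defined since $f_{(d,e)}(t)\in 1+t\,\mathbb{Q}\llbracket t\rrbracket$. Consequently $f_{(d,e)}(t)^{E}=S_{(d,e)}(t)^{bcE/g}$, so the quantity $tf_{(d,e)}(t)^E$ appearing inside the product on the right-hand side of Theorem~\ref{thm20250702n1} rewrites as $tS_{(d,e)}(t)^{bcE/g}$.

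Substituting these expressions into the equation of Theorem~\ref{thm20250702n1} turns it into
\[
    S_{(d,e)}(t)^{bc/g} \;=\; \prod_{k\geq 1}\bigl(1-(tS_{(d,e)}(t)^{bcE/g})^k\bigr)^{-k\chi_{(d,e)}(k)}.
\]
Raising both sides to the exponent $g/bc$ (again legitimate on $1+t\,\mathbb{Q}\llbracket t\rrbracket$, since $(1+h)^{g/bc}$ is defined by the usual binomial series) yields precisely the functional equation claimed in the corollary.

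For the assertion that this equation \emph{determines} $S_{(d,e)}(t)$, one can either transport uniqueness through the bijection $h\mapsto h^{bc/g}$ on $1+t\,\mathbb{Q}\llbracket t\rrbracket$ from Theorem~\ref{thm20250702n1}, or verify it directly: the right-hand side, regarded as an operator on series with constant term $1$, is contractive in the $t$-adic topology, because the factor corresponding to index $k$ contributes only in degrees $\geq k$, so two series agreeing modulo $t^N$ produce outputs agreeing modulo $t^{N+1}$. Hence the coefficients of $S_{(d,e)}(t)$ can be read off recursively. There is no genuine obstacle in this proof; the substantive inputs are the change-of-lattice identity (Proposition~\ref{prop: change of lattice}, used to establish Corollary~\ref{cor20250702n1}) and the quiver-moduli functional equation of \cite[Theorem~8.1]{RW}, both already cited.
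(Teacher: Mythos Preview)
Your proof is correct and follows exactly the approach the paper uses: the paper's proof consists of the single sentence ``This immediately follows from Corollary~\ref{cor20250702n1} and Theorem~\ref{thm20250702n1},'' and you have simply made the substitution $f_{(d,e)}(t)=S_{(d,e)}(t)^{bc/g}$ and the subsequent $g/bc$-th power explicit. Your added remarks on well-definedness of fractional powers in $1+t\,\mathbb{Q}\llbracket t\rrbracket$ and on uniqueness via $t$-adic contraction are welcome elaborations but not departures from the paper's argument.
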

\begin{proof}
 This immediately follows from   Corollary~\ref{cor20250702n1} and Theorem~\ref{thm20250702n1}.
\end{proof}

We will use this characterization below to prove \Cref{C1}. But first we prove \Cref{C5}--\Cref{C10}.

We consider the $t^1$-coefficients of $S_{(d,e)}(t)$ and $f_{(d,e)}(t)$ to find, using \cite[Corollary 9.1]{RW}:

If $\ii$ and $\jj$ are coprime, we have
$$\tau(\ii,\jj)=\frac{g}{bc}\chi_{(\ii,\jj)}(1).$$

In particular, \cite[Theorem 9.4]{RW} already solves \Cref{C9,C10}. 
\begin{prop}[{\cite[Theorem 9.4]{RW}}]
We have
    \begin{align*}
    & \tau(\ii,\ii-1)=\frac{g}{\ii((b-1)\ii+1)}\binom{(b-1)(c-1)\ii+c-1}{\ii-1},\\ 
    & \tau(\jj-1,\jj)=\frac{g}{\jj(\jj c-\jj+1)}\binom{(\jj c-\jj+1)(b-1)}{\jj-1}.
    \end{align*}
\end{prop}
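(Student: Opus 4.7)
The plan is to reduce the two identities to known closed-form expressions for Euler characteristics of moduli spaces of stable representations of the Kronecker-type quiver $K(b,c)$. Since $\gcd(\ii,\ii-1) = 1$ and $\gcd(\jj-1,\jj) = 1$, both pairs are already primitive, so the change-of-lattice formula of \Cref{prop: change of lattice} applies directly in the simplest possible form.

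Concretely, I would first extract the coefficient of $t^1$ from the identity
\[
    S_{(d,e)}(t) = f_{(d,e)}(t)^{g/bc}
\]
of \Cref{cor20250702n1}. Since both series have constant term $1$, matching the linear terms yields
\[
    \tau(d,e) = \frac{g}{bc}\,\tvco(d,e) = \frac{g}{bc}\,\chi_{(d,e)}(1),
\]
using the definition of $\chi_{(d,e)}(k)$ and the known identification $[t^1]f_{(d,e)}(t) = \chi_{(d,e)}(1)$ from \cite[Section 8]{RW}. This is the identity explicitly stated just before the proposition in the excerpt.

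Next, I would specialize to the two cases of interest, $(d,e)=(\ii,\ii-1)$ and $(d,e) = (\jj-1,\jj)$, and invoke \cite[Theorem 9.4]{RW}, which provides the explicit closed-form evaluations of $\chi_{(\ii,\ii-1)}(1)$ and $\chi_{(\jj-1,\jj)}(1)$ for the quiver $K(b,c)$. The only nontrivial remaining task is algebraic bookkeeping: after multiplying by $g/bc$, the stated formulas take the form
\[
    \tau(\ii,\ii-1) = \frac{g}{\ii((b-1)\ii+1)}\binom{(c-1)((b-1)\ii+1)}{\ii-1},
\]
and analogously for $\tau(\jj-1,\jj)$; here I would note the simplification $(b-1)(c-1)\ii + (c-1) = (c-1)((b-1)\ii+1)$ to see that these match \Cref{C9} and \Cref{C10}.

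The main (and essentially only) obstacle is the translation between the conventions of \cite{ERS} (cluster scattering diagram with $P_i = 1+x_i^{b},\, 1+x_i^{c}$) and those of \cite{RW} (tropical vertex with $P_i = (1+x_i)^{l_i}$), but this is precisely resolved by \Cref{prop: change of lattice}. Everything else is a direct invocation of the quiver moduli computation \cite[Theorem 9.4]{RW}, so there is no genuine new combinatorial work beyond confirming that the binomial expressions of the two papers are literally the same polynomial after the identifications $l_1 = b$, $l_2 = c$.
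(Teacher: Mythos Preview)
Your proposal is correct and follows essentially the same route as the paper: extract the $t^1$-coefficient from \Cref{cor20250702n1} to obtain $\tau(d,e)=\frac{g}{bc}\chi_{(d,e)}(1)$ for coprime $(d,e)$, then invoke \cite[Theorem 9.4]{RW} for the explicit values of $\chi_{(\ii,\ii-1)}(1)$ and $\chi_{(\jj-1,\jj)}(1)$. The paper's own argument is in fact terser than yours (it simply states that \cite[Theorem 9.4]{RW} already solves \Cref{C9,C10}), so your additional bookkeeping on the binomial factorization is a welcome elaboration rather than a departure.
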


Regarding \Cref{C5,C6}, we give two proofs. The first proof is given below, and the second proof follows from the more general formula in \Cref{prop: formula one length one part}. 

\begin{prop}\label{prop: central wall}
We have
    $$\tau(1,\jj)=\frac{g}{b}\binom{b}{\jj}\quad \text{and} \quad \tau(\ii,1)=\frac{g}{c}\binom{c}{\ii}.$$
\end{prop}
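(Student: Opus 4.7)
The plan is to apply the coprime-case formula $\tau(d,e) = \tfrac{g}{bc}\chi_{(d,e)}(1)$ established in the preceding paragraphs, and to compute $\chi_{(1,j)}(1)$ by a direct classification of the contributing stable $K(b,c)$-representations. The two displayed formulas of the proposition are interchanged under the $b \leftrightarrow c$, $x_1 \leftrightarrow x_2$, $i \leftrightarrow j$ symmetry of the tropical vertex data underlying $\frakD^\circ_{(c,b)}$, so it suffices to treat only one, say $\tau(1,j)$.

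First I would classify the dimension vectors $(\mathbb{P}_1,\mathbb{P}_2)$ of total dimension $(1,j)$ whose stable moduli are nonempty. Since one coordinate of the total dimension equals $1$, the corresponding ``thin'' side has exactly one entry equal to $1$ and all others zero, effectively collapsing the relevant subquiver to a star with a single distinguished vertex. Stability under the non-trivial symmetric stability of \cite{RW} then forces the ``thick'' side to carry only entries in $\{0,1\}$: an entry of size $\geq 2$ would admit a nonzero subspace killed by all arrows into the unique dimension-$1$ vertex on the thin side, yielding a destabilizing proper subrepresentation. For each binary dimension vector, the moduli space of stable representations is a single point, since the nonzero linear maps along the $j$ active arrows can be simultaneously normalized by the residual torus action modulo the diagonal scalar.

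Counting the two combinatorial choices (distinguished thin-side vertex and $j$-subset on the thick side) and matching them with the $K(b,c)$ convention fixed in the proof of \Cref{cor20250702n1} gives $\chi_{(1,j)}(1) = c\binom{b}{j}$, whence
\[
\tau(1,j) \;=\; \frac{g}{bc}\cdot c\binom{b}{j} \;=\; \frac{g}{b}\binom{b}{j}.
\]
The companion formula $\tau(i,1) = \tfrac{g}{c}\binom{c}{i}$ follows by the symmetry noted above.

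The main obstacle will be the careful stability analysis in the second step — specifically, verifying that no non-binary dimension vectors contribute (either because the stable locus is empty, or because it has vanishing Euler characteristic) and that the side-convention is tracked correctly, so that the count is $c\binom{b}{j}$ rather than the superficially similar $b\binom{c}{j}$. Although these are essentially standard properties of $\theta$-stable representations for complete bipartite quivers with thin dimension vectors, the bookkeeping is sensitive to which convention for the symmetric stability of \cite{RW} is in force. A useful shortcut, if available, would be to invoke a closed-form evaluation of $f_{(1,j)}(t)$ from \cite{RW} and simply extract the $t^1$-coefficient.
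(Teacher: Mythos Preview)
Your proposal is correct and follows essentially the same approach as the paper: both compute $\chi_{(1,j)}(1)$ by observing that the thin side of the dimension vector forces a single marked vertex among the $c$ sources, that stability forces the thick side to be a $0/1$ vector (so the stable moduli is a point), and hence $\chi_{(1,j)}(1)=c\binom{b}{j}$, with the companion formula obtained by duality. Your write-up supplies more detail on the stability and torus-normalization steps than the paper does, but the argument is the same.
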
\begin{proof}
    If $\ii$=1, the tuple $\mathbb{P}_1$ is given by a single entry $1$ at one of the $c$ vertices. The moduli space $M^{\rm st}(\mathbb{P}_1,\mathbb{P}_2)$ is non-empty only if $\mathbb{P}_2$ consists only of entries $0$ or $1$, in which case it reduces to a point. We thus have $c\binom{b}{\jj}$ choices for such a dimension vector contributing to the sum defining $\chi_{(1,\jj)}(1)$, and thus
$$\chi_{(1,\jj)}(1)=\frac{g}{bc}c\binom{b}{\jj}=\frac{g}{b}\binom{b}{\jj},$$
confirming \Cref{C5}. Again, \Cref{C6} follows by duality.
\end{proof}

The following proposition proves \Cref{C7,C8}.
\begin{prop}
We have
    $$\tau(\ii,\ii)=\frac{g}{(b-1)(c-1)\ii+g}\binom{(b-1)(c-1)\ii+g}{\ii} = \sum_{\ell=0}^{\infty} \frac{g}{\ell+1}\binom{\ii-1}{\ell}\binom{\ii(bc-b-c)+g-1 }{\ell}.$$
\end{prop}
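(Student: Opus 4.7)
The strategy is to reduce the statement to the already proved Gross--Pandharipande Conjecture (1.4) of [GP], established in [RW, Theorem 9.4]. As the surrounding text indicates, both formulas for $\tau(i,i)$ are equivalent to that result, so the proof is essentially a translation between the quiver-moduli language of [RW] and the scattering-diagram language used here.

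I would proceed as follows. Specialize \Cref{cor20250702n1} to $(d,e) = (1,1)$, obtaining $S_{(1,1)}(t) = f_{(1,1)}(t)^{g/bc}$ with $g = \gcd(b,c)$ and, in the notation of \Cref{thm20250702n1}, $E = (bc-b-c)/bc$. The coefficients of $f_{(1,1)}(t)$, expressed via the Euler characteristics $\chi_{(1,1)}(k)$ of stable moduli of $K(b,c)$-representations, are exactly the numbers computed in [RW, Theorem 9.4]. Extracting the $t^i$-coefficient of $f_{(1,1)}(t)^{g/bc}$ by Lagrange inversion applied to the implicit functional equation in \Cref{thm20250702n1} yields the Fuss--Catalan form
$$\tau(i,i) = \frac{g}{(b-1)(c-1)i + g}\binom{(b-1)(c-1)i + g}{i},$$
which is the first claimed identity.

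For the second formula, there are two natural routes. One is to carry out a separate Lagrange inversion tailored to an alternative reformulation of the functional equation, where the sum over $\ell$ arises from expanding a geometric-type factor. The other, which I would prefer for its transparency, is to verify
$$\frac{g}{(b-1)(c-1)i+g}\binom{(b-1)(c-1)i+g}{i} = \sum_{\ell=0}^{\infty} \frac{g}{\ell+1}\binom{i-1}{\ell}\binom{i(bc-b-c)+g-1}{\ell}$$
as a purely combinatorial identity, by rewriting both sides as hypergeometric series in $g$ and invoking a Chu--Vandermonde (or Pfaff--Saalsch\"utz) summation; alternatively, one observes that both sides are polynomials of degree $i$ in $g$ and matches them on sufficiently many specializations.

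The main technical content, namely the closed-form evaluation of the coefficients of $f_{(1,1)}(t)$, has already been handled in [RW, Theorem 9.4]. Once that input is cited, the remaining work is routine bookkeeping: Lagrange inversion for the first formula and a standard hypergeometric manipulation for the equivalence with the second. The only step that might require care is keeping track of the exponent $g/bc$ during Lagrange inversion, since $g/bc$ is generally not an integer; but the resulting coefficient formulas are still polynomial identities in $b$, $c$, $g$, so the argument is valid formally.
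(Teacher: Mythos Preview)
Your approach is essentially the paper's, but you have glossed over the one step that makes it work. The functional equation of \Cref{thm20250702n1} for $(d,e)=(1,1)$ is an infinite product indexed by the $\chi_{(1,1)}(k)$, and ``Lagrange inversion applied to the implicit functional equation'' is not actionable until you know that this product collapses. The paper makes this explicit: $\chi_{(1,1)}(k)=0$ for all $k\geq 2$ and $\chi_{(1,1)}(1)=bc$ (there are no stable $K(b,c)$-representations of dimension vector proportional to $(1,1)$ beyond dimension $(1,1)$ itself). With this vanishing, \Cref{lem20250702n2} reduces to the single algebraic equation
\[
    S_{(1,1)}(t) = \bigl(1 - t\,S_{(1,1)}(t)^{(bc-b-c)/g}\bigr)^{-g},
\]
and the paper then cites \cite[Theorem 1.4]{RNCH} (not \cite[Theorem 9.4]{RW}) for the explicit solution of $F(t)=(1-tF^{(m-1)/n})^{-n}$, specialized to $m=(b-1)(c-1)$, $n=g$. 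That result is indeed Lagrange inversion under the hood, so your mechanism is the same; just be sure to isolate the vanishing of the higher $\chi_{(1,1)}(k)$ as the crucial input.

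For the second identity, the paper does less than you propose: it simply notes that the equivalence of \Cref{C7} and \Cref{C8} was already observed in \cite{ERS}. Your Chu--Vandermonde route is a perfectly good explicit substitute.
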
\begin{proof}
    We consider the central slope $\ii=\jj$. We repeat the methods of \cite[Section 11]{RW}. First we find $\chi_{(1,1)}(k)=0$ for all $k\geq 2$, and $\chi_{(1,1)}(1)=bc$.  The defining functional equation for $S_{(1,1)}(t)$ thus simplifies to
$$S_{(1,1)}(t)=(1-tS_{(1,1)}(t)^{(bc-b^2-c^2)/g})^{-g}.$$
Specializing \cite[Theorem 1.4]{RNCH} to $q=1$, we find the following: If a formal series $F(t)\in\mathbb{Q}\llbracket t \rrbracket $ with constant term one is determined by 
$$F(t)=(1-tF^{(m-1)/n})^{-n},$$
then it is given by
$$F(t)=\sum_{i\geq 0}\frac{n}{(m-1)i-n}\binom{mi+n-1}{i}t^i.$$
This applies to $F(t)=S_{(1,1)}(t)$ using $m=(b-1)(c-1)$ and $n=g$, yielding
$$\tau(\ii,\ii)=\frac{g}{(bc-b-c)\ii+g}\binom{(b-1)(c-1)\ii+g-1}{\ii}=\frac{g}{(b-1)(c-1)\ii+g}\binom{(b-1)(c-1)\ii+g}{\ii},$$
confirming \Cref{C7} \cite[Conjecture 7]{ERS}. As already noted there, \Cref{C8} follows.
\end{proof}

Now we return to the general case. We first analyze the dependence of $\chi_{(d,e)}(k)$ on $b$ and $c$. 
\begin{prop}
There exists a polynomial $P_{(d,e),k}(b,c)$ of degree at most $kd-1$ in $b$ and $ke-1$ in $c$ such that
    $$\chi_{(d,e)}(k)=bcP_{(d,e),k}(b,c).$$
\end{prop}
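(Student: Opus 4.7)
The plan is to reduce to partition data, then read off the divisibility and degree bounds from elementary counts.

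First, I would exploit the $S_b \times S_c$ symmetry of the complete bipartite quiver $K(b,c)$: permuting the $b$ sources and the $c$ sinks induces isomorphisms of moduli spaces, so $\chi(M^{\rm st}(\mathbb{P}_1, \mathbb{P}_2))$ depends only on the multisets of nonzero entries of $\mathbb{P}_1$ and $\mathbb{P}_2$. Thus I can group the summands defining $\chi_{(d,e)}(k)$ by the pair of partitions $(\pt_1, \pt_2)$ with $\pt_1 \vdash kd$ and $\pt_2 \vdash ke$ obtained by dropping zero entries, writing
\[
\chi_{(d,e)}(k) = \sum_{\pt_1 \vdash kd}\sum_{\pt_2 \vdash ke} N(\pt_1, b)\, N(\pt_2, c)\, \chi^{\rm st}(\pt_1, \pt_2),
\]
where $\chi^{\rm st}(\pt_1, \pt_2)$ is the common Euler characteristic of any $M^{\rm st}(\mathbb{P}_1,\mathbb{P}_2)$ whose nonzero entries give $\pt_1, \pt_2$, and $N(\pt, n)$ counts ways to place the parts of $\pt$ into $n$ ordered slots (filling the remaining slots with $0$).

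Next I would identify $N(\pt, n)$ explicitly: if $\pt$ has length $\ell$ and part $j$ occurs with multiplicity $m_j$, then
\[
N(\pt, n) = \binom{n}{\ell}\frac{\ell!}{\prod_j m_j!},
\]
which is a polynomial in $n$ of degree exactly $\ell$ that vanishes at $n = 0$ (since $\ell \geq 1$ for any partition of a positive integer). In particular, $N(\pt, n)/n$ is a polynomial in $n$ of degree $\ell - 1$. Since $\ell(\pt_1) \leq kd$ and $\ell(\pt_2) \leq ke$, the products $N(\pt_1, b)/b$ and $N(\pt_2, c)/c$ are polynomials in $b$ and $c$ of degree at most $kd - 1$ and $ke - 1$ respectively.

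Putting these together,
\[
P_{(d,e),k}(b, c) := \sum_{\pt_1 \vdash kd}\sum_{\pt_2 \vdash ke} \frac{N(\pt_1, b)}{b} \cdot \frac{N(\pt_2, c)}{c} \cdot \chi^{\rm st}(\pt_1, \pt_2)
\]
is a polynomial of degree at most $kd - 1$ in $b$ and $ke - 1$ in $c$, and $\chi_{(d,e)}(k) = bc\, P_{(d,e),k}(b,c)$. The only conceptual step is the symmetry argument justifying the grouping by partitions; once that is in place, the divisibility and degree bounds are read off from the combinatorial identity for $N(\pt, n)$. The mild subtlety to address is that the sum over $\pt_1, \pt_2$ genuinely has only finitely many terms with a fixed expression in $b, c$ (independent of how large $b, c$ are), which follows because the number of partitions of $kd$ and $ke$ is finite and each $\chi^{\rm st}(\pt_1, \pt_2)$ is a fixed integer depending only on the partitions, not on $b$ or $c$.
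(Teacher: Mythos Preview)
Your proposal is correct and essentially identical to the paper's argument: both exploit the $S_b\times S_c$-symmetry of $K(b,c)$ to group the sum by pairs of partitions $(\pt_1,\pt_2)$, then observe that the count of placements $N(\pt,n)=\binom{n}{\ell(\pt)}\frac{\ell(\pt)!}{\prod_j m_j!}$ (the paper writes this as $z_\lambda\binom{n}{\ell(\lambda)}$) is a polynomial of degree $\ell(\pt)$ divisible by $n$. The only cosmetic difference is that you make the divisibility explicit via $N(\pt,0)=0$, whereas the paper simply notes $\ell(\lambda_i)\geq 1$.
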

\begin{proof}

The quiver $K(b,c)$ has a natural $S_b\times S_c$-symmetry, which implies that the moduli space $M^{\rm st}(\mathbb{P}_1,\mathbb{P}_2)$ only depends on the entries of $\mathbb{P}_1$, $\mathbb{P}_2$, but not on their respective ordering. This motivates the following definition: for partitions $\lambda_1$, $\lambda_2$, we define $M^{\rm st}(\lambda_1,\lambda_2)$ as the moduli space of stable representations for the dimension vector $(\lambda_1,\lambda_2)$ of $K(\ell(\lambda_1),\ell(\lambda_2))$. Here we denote by $\ell(\lambda)$ the length of a partition; we also need the notation $z_\lambda$ for the number of derangements of $\lambda$. Then we find
$$\chi_{(d,e)}(k)=\sum_{\substack{{\lambda_1\vdash kd}\\{\lambda_2\vdash ke}}}\chi(M^{\rm st}(\lambda_1,\lambda_2))z_{\lambda_1}z_{\lambda_2}\binom{b}{\ell(\lambda_1)}\binom{c}{ \ell(\lambda_2)}.$$
Namely, the possible ordered partitions $\mathbb{P}_1$ of $kd$ arise by choosing an unordered partition $\lambda_1$ of $kd$, a derangement of $\lambda_1$, and a choice of $\ell(\lambda_1)$ among the $b$ vertices in $K(b,c)$ (and similarly for $\mathbb{P}_2$). Since $\chi(M^{\rm st}(\lambda_1,\lambda_2))$ is independent of $b$ and $c$, we see that $\chi_{(d,e)}(k)$ behaves polynomially in $b$ and $c$. This polynomial admits a factor $bc$ since $\ell(\lambda_1),\ell(\lambda_2)\geq 1$ for all partitions. The degree is at most $kd$ in $b$ and at most $ke$ in $c$. We can thus write
$$\chi_{(d,e)}(k)=bcP_{(d,e),k}(b,c)$$
for some polynomial $P_{(d,e),k}(b,c)$ of degree at most $kd-1$ in $b$ and $ke-1$ in $c$.
\end{proof}

Now we use techniques of \cite[Section 4]{RCQM}. Essentially using \cite[Lemma 4.3]{RCQM}, we can prove: if a formal series $F(t)\in\mathbb{Q}\llbracket t\rrbracket $ with constant term one is determined by
$$F(t)=\prod_{k\geq 1}(1-(tF(t))^k)^{-a_k},$$
then, for all $k\in\mathbb{Q}$, we have
$$F(t)^k=\sum_{i\geq 0}k\sum_{l_*}\prod_{j\geq 1}\frac{1}{l_j!}a_j\prod_{k=1}^{l_j-1}((k+i)a_j+k)t^i,$$
where the second sum ranges over all tuples $l_*=(l_1,l_2,\ldots)$ of nonnegative integers such that $i=\sum_jjl_j$. Thanks to Lemma~\ref{lem20250702n2}, 
when $F(t)=S_{(d,e)}(t)^{bcE/g}$, $k=g/bcE$ and $a_k=k\chi_{(d,e)}(k)E$, we get
$$S(t)=\sum_{i\geq 0}\frac{g}{bcE}\sum_{l_*}\prod_{j\geq 1}\frac{1}{l_j!}j\chi_{(d,e)}(j)E\prod_{k=1}^{l_j-1}((\frac{g}{bcE}+i)j\chi_{(d,e)}(j)E+l)t^i.$$
Using $\chi_{(d,e)}(k)=bcP_{(d,e),k}(b,c)$, we clear all denominators $bcE$ and find
$$S(t)=\sum_{i\geq 0}g\sum_{l_*}\prod_j\frac{j}{l_j!}P_{(d,e),j}(b,c)\prod_k((i(debc-d^2b-e^2c)+g)P_{(d,e),j}(b,c)+k)t^i.$$
This implies the following closed formula for the $\tau(\ii,\jj)$ in terms of the geometrically defined $P_{(d,e),j}(b,c)$:
$$\tau(\ii,\jj)=g\sum_{l_*}\prod_j\frac{\jj}{l_j!}P_{(d,e),\jj}(b,c)\prod_{k=1}^{l_j-1}((\gcd(\ii,\jj)\cdot(debc-d^2b-e^2c)+g)P_{(d,e),\jj}(b,c)+k),$$
where the sum ranges over all $l_*$ such that $\sum_jjl_j=\gcd(\ii,\jj)$. 
From direct inspection of this expression, we obtain \Cref{C1}:

\begin{prop}[{\cite[Conjecture 1]{ERS}}]
    For $\ii,\jj>0$, the coefficient $\tau(\ii,\jj)$ is  a polynomial in $b,c$, and $g$.
\end{prop}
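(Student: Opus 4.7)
The plan is to read the claim directly off the closed-form expression for $\tau(\ii,\jj)$ derived immediately before the statement, namely
$$\tau(\ii,\jj)=g\sum_{l_*}\prod_j\frac{\jj}{l_j!}P_{(d,e),\jj}(b,c)\prod_{k=1}^{l_j-1}\left((\gcd(\ii,\jj)(debc-d^2b-e^2c)+g)P_{(d,e),\jj}(b,c)+k\right),$$
where $l_*=(l_1,l_2,\ldots)$ ranges over tuples of nonnegative integers satisfying $\sum_j j l_j=\gcd(\ii,\jj)$. First I would observe that this is a finite sum of finite products: the constraint bounds both the number of admissible tuples $l_*$ and the indices $j$ at which $l_j$ can be nonzero, so the outer sum, the outer product over $j$, and the inner product over $k$ are all finite.

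Next I would inspect each ingredient. By the preceding proposition, each $P_{(d,e),j}(b,c)$ is a polynomial in $b$ and $c$ with rational coefficients and no dependence on $g$. The expression $debc-d^2b-e^2c$ is a polynomial in $b,c$ with integer coefficients depending only on the fixed pair $(d,e)$, and $\gcd(\ii,\jj)$ is a fixed integer. Hence every factor of the form $(\gcd(\ii,\jj)(debc-d^2b-e^2c)+g)P_{(d,e),\jj}(b,c)+k$ is a polynomial in $b,c,g$, affine in $g$. Since products and finite sums of polynomials in $b,c,g$ remain polynomials in $b,c,g$, and multiplying by the prefactors $g$, $\jj$, and $1/l_j!$ preserves this property, we conclude $\tau(\ii,\jj)\in\mathbb{Q}[b,c,g]$.

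The main obstacle is not in this final inspection but in arriving at the closed formula itself, and that work has already been carried out via Theorem~\ref{thm20250702n1}, Corollary~\ref{lem20250702n2}, and the preceding proposition establishing $\chi_{(d,e)}(k)=bc\,P_{(d,e),k}(b,c)$. With those tools in hand, the proposition reduces to the structural observation above. As a sanity check, one can also track the $g$-degree: the outer factor of $g$ contributes a single power, and each inner product over $k$ contributes at most $l_j-1$ additional powers, yielding the bound $\deg_g\tau(\ii,\jj)\leq\gcd(\ii,\jj)$ that will later be matched in \Cref{C2}.
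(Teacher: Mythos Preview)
Your proof is correct and follows exactly the paper's approach: the paper obtains the same closed formula and then simply states ``From direct inspection of this expression, we obtain \Cref{C1}''. Your write-up just spells out that inspection in detail.
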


\section{Tight gradings}\label{section: tight grading}
Tight gradings were introduced in \cite{BLM} to give a combinatorial interpretation of the wall-function coefficients in generalized cluster scattering diagrams. In this section, we recall the definition of tight gradings and establish some of their structural properties. 

\subsection{Maximal Dyck paths}
Fix $d_1,d_2 \in \Z_{\geq 0}$. Consider a rectangle with vertices $(0,0)$, $(0,d_2)$, $(d_1,0)$, and $(d_1,d_2)$ with a main diagonal from $(0,0)$ to $(d_1,d_2)$.

We first require various notions of paths.  

\begin{defn} Let $d_1$ and $d_2$ be nonnegative integers.
\begin{itemize}
\item A \emph{NE path} is a lattice path in $(\mathbb{Z}\times \mathbb{R})\cup(\mathbb{R}\times \mathbb{Z})\subset \mathbb{R}^2$ starting at $(0,0)$ and ending at $(d_1,d_2)$, proceeding by only unit north and east steps.
\item A \emph{Dyck path} is a NE path that never passes strictly above the main diagonal from $(0,0)$ to $(d_1,d_2)$.
\item The \emph{maximal Dyck path} $\calP(d_1,d_2)$ is the Dyck path proceeding from $(0,0)$ to $(d_1,d_2)$ that is closest to the main diagonal.
\end{itemize}
\end{defn}

Given a subset $C$ of steps (also known as \emph{edges}) in a NE path $\calP$, we denote the set of east (resp. north) steps by $C_\east$ (resp. $C_\north$) and the total number of steps by $|C|$.  For edges $\edge_1 \in \calP_\east$ and $\edge_2 \in \calP_\north$, let $\overrightarrow{\edge_1\edge_2}$ denote the subpath proceeding east from the left endpoint of $\edge_1$ to the top endpoint of $\edge_2$, continuing cyclically around $\calP$ if $\edge_1$ is east of $\edge_2$.

The Dyck paths from $(0,0)$ to $(d_1,d_2)$ form a partially ordered set by comparing the heights at all vertices.  The \emph{maximal Dyck path} $\calP(d_1,d_2)$ is the maximal element under this partial order.  We label the horizontal edges in $\calP(d_1,d_2)$ from left to right by $u_1,u_2,\dots,u_{d_1}$ and the vertical edges from bottom to top by $v_1,v_2,\dots,v_{d_2}$.

\begin{defn}
We define the function $\pi: \calP(d_1,d_2) \to [0,d_1+d_2)$ by $\rd(\edge) \coloneqq  d_2x - d_1y$ for each edge $\edge$, where $(x,y)$ is the location of the south or west vertex of $\edge$.   
\end{defn}

When $d_1$ and $d_2$ are coprime, the maximal Dyck path $\calP(d_1,d_2)$ corresponds to the lower Christoffel word of slope $d_2/d_1$; see \cite[Sec. 1.2]{BLRS} for further details.  The following properties follow from this identification.

\begin{rem}
The quantity $\rd(\edge)$ is the relative distance of the left (resp. bottom) vertex to the main diagonal among all vertices of $\calP(d_1,d_2)$.  In particular, we have $\rd(\edge) = \rd(\edge')$ if their distances to the diagonal are equal, and we have $\rd(\edge) = 0$ if the corresponding vertex is on the main diagonal.  Moreover, as in \cite[Lemma 1.3]{BLRS}, if $d_1$ and $d_2$ are coprime, then $\rd(\edge )$ is distinct for all edges of $\calP(d_1,d_2)$.  
\end{rem}

More specifically, for each edge $\alpha$ of the maximal Dyck path $\mathcal{P}(d_1,d_2)$, with lattice point $(x,y)$ as its western or southern vertex, the value
$\pi(\alpha)/d_1 = \frac{d_2 x - d_1 y}{d_1} = \frac{d_2}{d_1}x - y \geq 0$ since $(x,y)$ lies on or below the line of slope $\frac{d_2}{d_1}$.  The fractional value is the exact distance to the diagonal.  If we label the edges of $\mathcal{P}(d_1,d_2)$ in order from $(0,0)$ to $(d_1,d_2)$ as $\alpha_0,\alpha_1,\alpha_2, \dots, \alpha_{d_1+d_2-1}$, this allows us to interpret $\pi(\alpha_k)$ as the modular residue of $k\cdot d_2 \mod (d_1+d_2)$. 

\begin{exmp} 
Consider the maximal Dyck path $\calP(5,3)$ that goes through the sequence of lattice points $(0,0), (1,0), (2,0),(2,1),(3,1),(4,1),(4,2),(5,2),(5,3)$ using the edges given by $u_1, u_2,v_1,u_3,u_4,v_2,u_5,v_3$.  Then we get all distinct integer values in $[0,d_1+d_2)$, for $d_1=5$, $d_2=3$, as expected:

\begin{center}
$\pi(\alpha_0) = \pi(u_1) = 3\cdot 0 - 5\cdot 0 = 0 $
\hspace{5em} $\pi(\alpha_4) = \pi(u_4) = 3\cdot 3 - 5\cdot 1 = 4$

$\pi(\alpha_1) = \pi(u_2) = 3\cdot 1 - 5\cdot 0 = 3 $ \hspace{5em} $\pi(\alpha_5) = \pi(v_2) = 3\cdot 4 - 5\cdot 1 = 7$

$\pi(\alpha_2) = \pi(v_1) = 3\cdot 2 - 5\cdot 0 = 6$
\hspace{5em} $\pi(\alpha_6) = \pi(u_5) = 3\cdot 4 - 5\cdot 2 = 2$

$\pi(\alpha_3) = \pi(u_3) = 3\cdot 2 - 5\cdot 1 = 1$ \hspace{5em} $\pi(\alpha_7) = \pi(v_3) = 3\cdot 5 - 5\cdot 2 = 5$

\end{center}
\end{exmp}

\subsection{Definition of tight gradings}
Motivated by Lee--Schiffler \cite{LS}, 
Lee, Li, and Zelevinsky \cite{LLZ} introduced combinatorial objects called \emph{compatible pairs} to construct the \emph{greedy basis} for rank-$2$ cluster algebras, consisting of indecomposable positive elements including the cluster monomials. Rupel \cite{Rupgengreed, Rup2} extended this construction to the setting of \emph{generalized} rank-$2$ cluster algebras by defining \emph{compatible gradings}.

A function from the set of edges on $\mathcal{P}(d_1,d_2)$ to $\mathbb{Z}_{\ge0}$ is called a \emph{grading}. Given a grading $\omega: \mathcal{P}(d_1,d_2) \to \Z_{\ge 0}$ and a set of edges $C \subset \mathcal{P}(d_1,d_2)$, we set $\omega(C) \coloneqq \sum_{\edge \in C}\omega(\edge)$.

\begin{defn}\label{def: compatible grading}
Let $\calP$ be an NE path.  A grading $\omega: \calP \rightarrow \Z_{\geq 0}$ is called \emph{compatible} if for every pair of $u\in \calP_\east$ and $v\in \calP_\north$ with $\omega(u)\omega(v)>0$, there exists an edge $\edge$ along the subpath $\overrightarrow{uv}$ so that at least one of the following holds:
  \begin{equation}\label{0407df:comp}
   \aligned
&  \edge \in \calP_\north \setminus\{v\} \quad \text{and} \quad |\overrightarrow{u\edge}_\north|= \omega\left( \overrightarrow{u\edge}_\east\right);\\
 &  \edge \in \calP_\east \setminus\{u\} \quad \text{and} \quad |\overrightarrow{\edge v}_\east|=\omega\left( \overrightarrow{\edge v}_\north\right).
  \endaligned
  \end{equation}

\end{defn} 

We denote the set of compatible gradings on $\calP(d_1,d_2)$ with $\omega(\calP_\north) = \pp$, $\omega(\calP_\east) = \qq$, by $\CG(d_1,d_2,\pp,\qq)$.
In the compatible pairs setting of \cite{LLZ}, $\omega$ takes at most one distinct nonzero value $c$ on the horizontal edges and at most one distinct nonzero value $b$ on the vertical edges. This is the setting relevant for the cluster algebra $\mathcal{A}(b,c)$ and the cluster scattering diagram $\frakD_{(b,c)}$.
Thus we define $\CGbc(d_1,d_2,\ii,\jj)$ to be the subset of gradings $\omega$ in $\CG(d_1,d_2,b\ii,c\jj)$ such that $\omega$ is either $b$ (resp. $c$) or $0$ on each edge of $\calP_\north$ (resp. $\calP_\east$).

In their study of compatible pairs, Lee, Li, and Zelevinsky \cite{LLZ} introduced the notion of the ``shadow'' of a set of horizontal (or vertical) edges, which Rupel \cite{Rup2} extended to the setting of gradings.

\begin{defn}\label{def: shadow}
Fix an NE path $\calP$ and a grading $\omega \colon \calP \rightarrow \Z_{\geq 0}$.
For each edge $\edge$ in $\calP$, we define its \emph{shadow}, denoted by $\sh(\edge)$ or $\sh(\edge;\omega)$, as follows. 
\begin{itemize}
    \item If $\edge$ is horizontal, then its shadow is $\overrightarrow{\edge v}_\north$, where $v \in \calP_\north$ is chosen such that $|\overrightarrow{\edge v}_\north| = \omega(\overrightarrow{\edge v}_\east)$ and $\overrightarrow{\edge v}$ has minimal length under this condition.  If no such $v$ exists, let $\sh(\edge) = \calP_\north$.
    \item If $\edge$ is vertical, then its shadow is $\overrightarrow{u\edge}_\east$, where $u \in \calP_\east$ is chosen such that $|\overrightarrow{u\edge}_\east| = \omega(\overrightarrow{u\edge}_\north)$ and $\overrightarrow{u\edge}$ has minimal length under this condition.  If no such $u$ exists, let $\sh(\edge) = \calP_\east$.
\end{itemize}  

For $C \subset \calP$, let the \emph{shadow} of $C$ be $\sh(C)=\bigcup_{\edge \in C} \sh(\edge)$.  
\end{defn}

Let $S_\east = S_\east(\omega)$ denote the set of horizontal edges $u \in \calP_\east$ such that $\omega(u) > 0$.  Similarly, we define $S_\north$ to be the set of vertical edges with nonzero value in the grading. 

\begin{defn}[tight grading]\label{def: tight grading}
    A compatible grading $\omega$ in $\CG(d_1,d_2,\pp,\qq)$ is called 
    \emph{tight} if
    \begin{itemize}
        \item $S_\east\subseteq \sh(S_\north)$ or $S_\north\subseteq \sh(S_\east)$ and
        \item $\pp d_2 - \qq d_1 = \epsilon \gcd(\pp, \qq)$ for $\epsilon = 1$ or $-1$.
    \end{itemize}
\end{defn}

\begin{rem}\label{rem: pos neg tight grading}
The two bulleted conditions in \Cref{def: tight grading} each have two possibilities, but the choice of one determines the other.  In particular, given a tight grading, we have $S_\east \subseteq \sh(S_\north)$ if and only if $\epsilon = 1$, and otherwise we have $S_\north \subseteq \sh(S_\east)$ when $\epsilon = -1$ (see, for example, \cite[Lemma 5.23]{BLM2}).  
\end{rem}

We now explain how tight gradings are related to wall-functions. Returning to the scattering diagram $\frakD(P_1, P_2)$ at the beginning of \Cref{section: change of lattice} where
\[
    P_i = 1 + \sum_{k\geq 1} p_{i,k}x_i^k \in \Bbbk\llbracket x_i\rrbracket .
\]
A main result of \cite{BLM2} is the following integrality and positivity statement for $\frakD(P_1, P_2)$: any wall-function coefficient behaves as a polynomial in $\{p_{i,k}\mid i\in \{1,2\}, k\geq 1\}$ with nonnegative integer coefficients given by counts of tight gradings.

\begin{thm}[{\cite[Theorem 5.17 and (5.35)]{BLM2}}]\label{thm: universal coefficient}
\leavevmode
    \begin{enumerate}[font=\upshape]
        \item There are nonnegative integers $\lambda(\pt_1, \pt_2)$ for pairs of partitions $(\pt_1, \pt_2)$ such that for any positive integers $d$ and $e$ with $\gcd(d,e)=1$, the wall-function on $\mathbb{R}_{\leq 0}(d, e)$ is given by
        \[
            \fclu{d}{e} = 1 + \sum_{k\geq 1}\sum_{\pt_1\vdash kd}\sum_{\pt_2\vdash ke}\lambda(\pt_1, \pt_2)\prod_{i\in \pt_1}p_{1,i}\prod_{j\in \pt_2}p_{2,j}\cdot x_1^{kd}x_2^{ke}.
        \]
        \item For partitions $\pt_1\vdash \pp$ and $\pt_2\vdash \qq$, the number $\lambda(\pt_1, \pt_2)$ counts tight gradings $\omega$ on $\mathcal P(d_1, d_2)$ with $\pp d_2 - \qq d_1 = \pm \gcd(\pp,\qq)$, $d_1\geq \pp$, $d_2\geq \qq$ such that 
        \[
            \pt_1 = (\omega(v)\mid v\in S_\north(\omega))\quad \text{and} \quad \pt_2 = (\omega(u)\mid u\in S_\east(\omega)),
        \]
        independent of the choice of $(d_1, d_2)$.  Here, we reorder the non-zero elements of the tuples $(\omega(v)\mid v\in S_\north(\omega))$ and $(\omega(u)\mid u\in S_\east(\omega))$ in weakly decreasing order so that they are integer partitions.

    \end{enumerate}
\end{thm}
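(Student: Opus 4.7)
The plan is to split the argument in two: first establish part (1), the existence and polynomiality of $\lambda(\pt_1, \pt_2)$, and then prove part (2), the combinatorial interpretation via tight gradings.

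For part (1), I would treat the coefficients $p_{i,k}$ as formal indeterminates and invoke the Kontsevich--Soibelman existence/uniqueness theorem for $\frakD(P_1, P_2)$. The recursive consistency procedure that produces each wall-function via commutator formulas in the completed tropical vertex group yields coefficients polynomial in the $p_{i,k}$'s with integer coefficients; nonnegativity follows, for instance, from the quiver-moduli interpretation recalled in Section~\ref{section: wall function quiver moduli}, wherein each contribution is identified with a virtual count of stable representations. A bidegree argument — each commutator step preserves the total $x_1$- and $x_2$-degree counted with multiplicity by the partition parts — then forces the monomials appearing in the coefficient of $x_1^{kd}x_2^{ke}$ in $\fclu{d}{e}$ to take the form $\prod_{i \in \pt_1}p_{1,i}\prod_{j\in \pt_2}p_{2,j}$ with $\pt_1 \vdash kd$ and $\pt_2 \vdash ke$. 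Reading off these coefficients defines $\lambda(\pt_1, \pt_2)$ unambiguously.

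For part (2), the goal is an explicit bijection between monomial contributions computing $\lambda(\pt_1, \pt_2)$ and tight gradings on a maximal Dyck path $\calP(d_1, d_2)$ satisfying $d_1\geq \pp$, $d_2\geq \qq$, and $\pp d_2 - \qq d_1 = \pm\gcd(\pp,\qq)$. To a tight grading $\omega$ one assigns the monomial recording its values: a horizontal edge $u$ with $\omega(u)>0$ contributes a factor $p_{2,\omega(u)}$ and a vertical edge $v$ with $\omega(v)>0$ contributes $p_{1,\omega(v)}$. The compatibility axiom of Definition~\ref{def: compatible grading} encodes the adjacency constraints governing which commutator contributions can combine in the scattering algorithm along the ray $\mathbb{R}_{\leq 0}(d,e)$, while the shadow-containment requirement $S_\east \subseteq \sh(S_\north)$ (or its dual, per Remark~\ref{rem: pos neg tight grading}) selects a canonical ordering representative among all those producing the same monomial. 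Independence of $(d_1, d_2)$ within the stated range should follow from an insertion/deletion bijection on the ``free'' edges — those outside any relevant shadow — which must receive weight zero in every tight grading; this lets one pass between admissible rectangles without changing the underlying partition data.

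The main obstacle will be the well-definedness and exhaustiveness of the bijection in part (2): many distinct orderings of commutators in the Kontsevich--Soibelman algorithm produce the same monomial, and one must verify that each such equivalence class corresponds to exactly one tight grading and that every tight grading arises in this way. The shadow condition is precisely the mechanism intended to pick canonical representatives, but turning this heuristic into a rigorous argument calls for a recursive peeling analysis — removing (say) the extremal nonzero-weight edge of $\calP(d_1, d_2)$ and inducting on $d_1+d_2$ — together with a careful verification that the partition-reordering step built into the statement matches the natural multiplicity structure of the iterated commutator expansions in the tropical vertex group.
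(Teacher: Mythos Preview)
This theorem is not proved in the present paper: it is quoted verbatim from \cite[Theorem~5.17 and (5.35)]{BLM2} and used as a black box. So there is no proof here to compare against, and your proposal is really a sketch of how one might reprove the cited result.

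That said, your sketch has two substantive gaps. For part~(1), you appeal to the quiver-moduli material in Section~\ref{section: wall function quiver moduli} for nonnegativity of $\lambda(\pt_1,\pt_2)$, but that section treats only the \emph{specialized} diagram $\frakD((1+x_1)^c,(1+x_2)^b)$, where the $p_{i,k}$ are evaluated at binomial coefficients. Euler characteristics of moduli of $K(b,c)$-representations do not obviously control the individual universal coefficients $\lambda(\pt_1,\pt_2)$ before specialization; indeed, proving their nonnegativity in full generality is exactly the content of \cite{BLM2}, so invoking quiver moduli here would be circular or at best incomplete. The Kontsevich--Soibelman commutator recursion gives integrality and polynomiality readily, but not positivity.

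For part~(2), your proposed bijection between ``monomial contributions in the commutator algorithm'' and tight gradings is too schematic to carry weight: iterated commutators in the tropical vertex group produce many cancelling terms, and there is no evident canonical ordering that singles out one tight grading per surviving monomial. The actual argument in \cite{BLM2} does not proceed this way; it goes through the greedy/theta-function framework and a broken-line expansion of wall-crossings, relating compatible gradings to theta functions first and then isolating the tight ones as those contributing to a specific ray. Your ``recursive peeling'' idea and the insertion/deletion of free edges for $(d_1,d_2)$-independence are in the right spirit, but as stated they do not amount to a proof.
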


\begin{rem}\label{rem: tight grading count}
    In the cluster setting, \Cref{thm: universal coefficient} states that 
    $$\tau^{b,c}(i,j) = \left |\left\{\omega \in \CGbc(d_1,d_2,i,j) \mid \omega \text{ is a tight grading}\right\}\right |\,,$$
    whenever $d_1\geq bi$, $d_2\geq cj$, and $|bid_2 - cjd_1| = \gcd(bi,cj)$.
\end{rem}

\subsection{Structure of Tight Gradings}
We now prove several structural results about tight gradings, which are needed for the later combinatorial constructions.  

We begin by establishing that tight gradings have only one ``component'', in the sense that the horizontal and vertical shadows have no gaps.  

\begin{lem}\label{lem: tight grading consecutiveness}
 Let $\omega$ be a tight grading on $\calP = \calP(d_1,d_2)$ with $\omega(\calP_\north) = p$ and $\omega(\calP_\east) = q$. Then there exists a unique $u_{\max} \in \calP_{\east}$ and $v_{\max} \in \calP_{\north}$ such that
 \begin{itemize}
    \item $\overrightarrow{u_{\max}v_{\max}} = \sh(S_\east) \cup \sh(S_\north) \cup \{v_{\max}\}$ if $pd_2 - qd_1 > 0$, or 
     \item $\overrightarrow{u_{\max}v_{\max}} = \{u_{\max}\} \cup \sh(S_\east) \cup \sh(S_\north)$ if $pd_2 - qd_1 < 0$.
 \end{itemize}
\end{lem}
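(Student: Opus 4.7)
The plan is to reduce to the case $pd_2 - qd_1 > 0$; the opposite inequality is symmetric under the east/north swap of \Cref{rem: pos neg tight grading}. Tightness then supplies $S_\east \subseteq \sh(S_\north)$. Write $H \coloneqq \sh(S_\north) \subseteq \calP_\east$ and $V \coloneqq \sh(S_\east) \subseteq \calP_\north$; the goal is to identify a unique pair $(u_{\max}, v_{\max})$ with $\overrightarrow{u_{\max} v_{\max}} = H \cup V \cup \{v_{\max}\}$. Concretely, $v_{\max}$ should emerge as the cyclically last vertical edge in $S_\north \cup \sh(S_\east)$ and $u_{\max}$ as the cyclically first horizontal edge of $\sh(S_\north)$.

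The first substantive step is a nesting property of shadows: if $v, v' \in S_\north$ with $v'$ cyclically later than $v$, then $\sh(v) \subseteq \sh(v')$. This follows from the defining minimal-length equation $|\overrightarrow{u v'}_\east| = \omega(\overrightarrow{u v'}_\north)$: traveling backward from $v'$ past $v$ forces the positive weight $\omega(v)$ into the north-weight total, so balancing the east-edge count requires sweeping through every east edge of $\sh(v)$. Iterating over $v \in S_\north$ in cyclic order identifies $H$ with $\sh(v^*)$ for $v^*$ the cyclically last element of $S_\north$, exhibiting $H$ as a cyclically contiguous collection of east edges. An entirely parallel argument on east shadows gives $V = \sh(u^*)$ for $u^*$ the cyclically first element of $S_\east$, a contiguous block of north edges.

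The main obstacle is the gluing step: showing that $H$ and $V$ fit together with exactly one extra vertical edge $v_{\max}$ to fill the contiguous subpath $\overrightarrow{u_{\max} v_{\max}}$. The plan is (i) every east edge strictly between $u_{\max}$ and $v^*$ lies in $H$, by shadow minimality for $v^*$; (ii) every north edge strictly between $u^*$ and $v_{\max}$ other than $v_{\max}$ lies in $V$, by the analogous minimality for $u^*$; and (iii) since $u^* \in S_\east \subseteq H$, the east edge $u^*$ already lies inside $\sh(v^*)$, so $V$ starts before the east block $H$ ends (cyclically), placing $H$ and $V$ adjacent with no internal gap. The tightness identity $pd_2 - qd_1 = \gcd(p, q)$ enters at the cardinality check: it pins $|H| + |V| + 1$ to the cyclic length $|\overrightarrow{u_{\max} v_{\max}}|$, forcing precisely one surplus vertical edge $v_{\max}$ just beyond $V$. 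Uniqueness of $(u_{\max}, v_{\max})$ then follows from the above characterizations as the cyclically extremal edges of $S_\east \cup \sh(S_\north)$ and $S_\north \cup \sh(S_\east)$ respectively. The delicate point is the arithmetic translation of the tightness identity into the $+1$ cardinality discrepancy, where the sign $\epsilon = +1$ matters crucially; this is the main obstacle in the proof.
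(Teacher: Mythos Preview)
Your argument has a genuine gap at the very first step. The claimed nesting property---that for $v,v'\in S_\north$ with $v'$ ``cyclically later'' than $v$ one always has $\sh(v)\subseteq\sh(v')$---is false. Edge shadows are only guaranteed to be either nested \emph{or disjoint} (the paper uses exactly this dichotomy). Your justification assumes that the backward walk from $v'$ defining $\sh(v')$ actually passes through $v$; but if enough zero-weight horizontal edges sit between $v$ and $v'$, the count $|\overrightarrow{u v'}_\east|$ catches up to $\omega(\overrightarrow{u v'}_\north)$ before reaching $v$, and $\sh(v')$ is then disjoint from $\sh(v)$. Nothing in the bare shadow definition, nor in the containment $S_\east\subseteq\sh(S_\north)$, prevents this. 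Consequently $H=\sh(S_\north)$ need not equal $\sh(v^*)$ for any single $v^*$, and your chain of reductions collapses.

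The paper confronts this head-on: it decomposes the shadow into its maximal disjoint components $\sh(u^{(1)}),\dots,\sh(u^{(\ell)})$, and the entire content of the proof is to force $\ell=1$. This is where the tightness arithmetic $|pd_2-qd_1|=\gcd(p,q)$ enters---not at a final cardinality check, but at the outset. Using the distance function $\pi$ on the maximal Dyck path, one shows $\pi(u^{(i)})<\eta_i$ for a partition $(\eta_i)\vdash\gamma$, and then that the unique edge with $\pi$-value $0$ lies inside every component $\overrightarrow{u^{(i)}v^{(i)}}$. Disjointness then gives $\ell=1$. You have deferred the tightness identity to the end, but it is needed precisely to rule out the multiple-component scenario your nesting claim silently assumes away.
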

\begin{proof}
    Let $\gamma \coloneqq \gcd(p,q) = |pd_2 - qd_1|$.  Without loss of generality, assume $pd_2 - qd_1  < 0$.  The case where $pd_2 - qd_1 > 0$ is analogous as we can swap the roles of horizontal and vertical edges.  Pick $u^{(i)} \in S_\east$ for $i = 1,2,\dots,\ell$ such that $\sh(S_\east)$ is the disjoint union of the shadows $\sh(u^{(i)})$.  Note that this is possible since edge shadows are either nested or disjoint.  For each $i$, let $v^{(i)}$ be the unique vertical edge such that $\displaystyle  \overrightarrow{u^{(i)}v^{(i)}}_\north = \sh(u^{(i)})$. Let $\pi_i \coloneqq \pi\left(u^{(i)}\right)$, $q_i \coloneqq  \left|\overrightarrow{u^{(i)}v^{(i)}}_\north\right|$, and $p_i \coloneqq  \left|\overrightarrow{u^{(i)}v^{(i)}}_\east\right| - 1$.  

    By the definition of compatibility for the grading $\omega$,  we have $q_id_1 > p_id_2 + \pi_i \geq p_i d_2$.  Moreover, since $\sum_{i=1}^\ell q_i = q$ and $qd_1 - pd_2 = \gcd(p,q)$, we can choose a partition $(\eta_1,\dots,\eta_\ell) \vdash \gamma$ such that $p_i = \frac{\eta_i}{\gamma}p$ and $q_i = \frac{\eta_i}{\gamma}q$.  In particular, this implies $\pi_i < q_id_1 - p_id_2 = \eta_i$.   

    We now aim to show that $u_1 \in \overrightarrow{u^{(i)}v^{(i)}}$ for all $i$.  Consider the horizontal edge $e$ that is $\pi_i\frac{p}{\gamma}$ steps to the right of $u^{(i)}$ (continuing cyclically around $\calP$ if necessary).  We then have 
    $$\pi(e) = \pi_i +\frac{\pi_i p}{\gamma} d_2 - \frac{\pi_i q}{\gamma}d_1 = \pi_i + \pi_i\cdot \frac{pd_2-qd_1}{\gamma} = \pi_i - \pi_i  = 0\,.$$
    So indeed, since $\pi_i\frac{p}{\gamma} < p_i$ and $u_1$ is the unique edge with $\pi(e) = 0$, the edge $u_1$ is included in the path $\overrightarrow{u^{(i)}v^{(i)}}$.  

    As we assumed the paths $\overrightarrow{u^{(i)}v^{(i)}}$ were disjoint, there can be only one such path, so $\ell = 1$.  We can then let $u_{\max} = u^{(1)}$.  We can also see that $|\sh(u_{\max})| = p+1$, so we can let $v_{\max} = v^{(1)}$.  
\end{proof}

 Next, we show that the tight gradings can only be located at certain positions along the Dyck path; this result is developed pictorially in \Cref{picto_tight}.

\begin{lem}\label{lem: tight grading bounding labels}
 Let $\omega$ be a tight grading on $\calP = \calP(d_1,d_2)$ with $\omega(\calP_\north) = p$ and $\omega(\calP_\east) = q$, and set $\gamma = \gcd(\pp,\qq)$. Let $u_{\max}$ and $v_{\max}$ be as in \Cref{lem: tight grading consecutiveness}.  Let $v_{\min}$ be the first vertical edge to the right of $u_{\max}$, and let $u_{\min}$ be the first horizontal edge below $v_{\max}$. 
\begin{itemize}
    \item If $\pp d_2 - \qq d_1 > 0$, we have $d_1 \leq \rd(v_{\max}) < d_1 + \gamma$, $\rd(u_{\max}) =  \rd(v_{\max}) - \gamma$, and $\omega(v_{\min}) = 0$.  
    \item If $\pp d_2 - \qq d_1 < 0$, we have $0 \leq \rd(u_{\max}) < \gamma$, $\rd(v_{\max}) = \rd(u_{\max}) + d_1+d_2-\gamma$, and $\omega(u_{\min}) = 0$.
\end{itemize}
\end{lem}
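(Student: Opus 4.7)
The plan is to read the coordinates of $u_{\max}$ and $v_{\max}$ off of the subpath structure provided by Lemma 4.11, translate them into values of $\pi$ via the definition $\pi(e) = d_2 x - d_1 y$, and then invoke the minimality built into the definition of shadow to rule out positive weight on $u_{\min}$ (resp.\ $v_{\min}$).

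In the negative case, Lemma 4.11 gives that $\overrightarrow{u_{\max} v_{\max}}$ has $p+1$ east and $q$ north edges, starting at the west vertex $(x_1, y_1)$ of $u_{\max}$ and ending at the top vertex $(x_1+p+1, y_1+q)$ of $v_{\max}$, and its proof shows $\pi(u_{\max}) = \pi_1 \in [0, \gamma)$. Substituting the coordinates into $\pi(e) = d_2 x - d_1 y$ yields
\[
\pi(v_{\max}) - \pi(u_{\max}) = (p+1)d_2 - (q-1)d_1 = d_1 + d_2 + (pd_2 - qd_1) = d_1 + d_2 - \gamma,
\]
establishing both claims for this case. The positive case runs dually: the subpath has $p$ east and $q+1$ north edges, so $\pi(v_{\max}) - \pi(u_{\max}) = pd_2 - qd_1 = \gamma$; the lower bound $\pi(v_{\max}) \geq d_1$ is forced by the Dyck condition at the top vertex of $v_{\max}$, and the upper bound $\pi(v_{\max}) < d_1 + \gamma$ follows from the positive-case analogue of the diophantine argument in Lemma 4.11, applied to the last north edge of $\calP(d_1,d_2)$ (the unique edge of $\pi$-value $d_1$), which must lie inside $\overrightarrow{u_{\max} v_{\max}}$ exactly $\pi(v_{\max})-d_1$ edges back from $v_{\max}$.

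For the weight claim, I argue by contradiction in the negative case. Suppose $\omega(u_{\min}) > 0$. Using $\gamma \leq d_1$ and the maximality of the Dyck path, the second-to-last north edge $v^{(q-1)}$ of the subpath sits immediately below $u_{\min}$, so no other east edge of the subpath intervenes between $v^{(q-1)}$ and $v_{\max}$. Since $u_{\min} \in S_\east$, its shadow must lie inside $\sh(S_\east) = \overrightarrow{u_{\max} v_{\max}}_\north$, and since the only north edge of the subpath past $u_{\min}$ is $v_{\max}$, the minimality in the definition of shadow forces $\sh(u_{\min}) = \{v_{\max}\}$ and hence $\omega(u_{\min}) = 1$. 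But then
\[
\omega\bigl(\overrightarrow{u_{\max} v^{(q-1)}}_\east\bigr) = q - \omega(u_{\min}) = q - 1 = \bigl|\overrightarrow{u_{\max} v^{(q-1)}}_\north\bigr|,
\]
so $v^{(q-1)}$ satisfies the shadow match condition for $u_{\max}$ strictly before $v_{\max}$, contradicting the minimality of $\sh(u_{\max}) = \overrightarrow{u_{\max} v_{\max}}_\north$. The positive case is symmetric, with $v_{\min}$ playing the role of $u_{\min}$ and the contradiction coming from the shadow of $v_{\max}$ finding a match at the first east edge past $v_{\min}$.

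The main delicacy is the geometric adjacency step, namely that no east edge other than $u_{\min}$ intervenes between $v^{(q-1)}$ and $v_{\max}$ in the subpath (and the dual statement in the positive case). This is what makes the shadow arithmetic exact. It reduces to the inequalities $\gamma \leq d_1$ and $\gamma \leq d_2$, which are implicit in Theorem 4.7's framework through $d_1 \geq p \geq \gamma$ and $d_2 \geq q \geq \gamma$. Once the adjacency is verified, the remainder of the argument is routine bookkeeping with the coordinates from Lemma 4.11.
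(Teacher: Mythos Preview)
Your computation of the $\pi$-values is essentially the paper's: once the edge counts of $\mathfrak s=\overrightarrow{u_{\max}v_{\max}}$ are known, the identity $\pi(v_{\max})-\pi(u_{\max})=|\mathfrak s_\east|\,d_2-(|\mathfrak s_\north|-1)\,d_1$ does the work. One small point: your upper bound $\pi(v_{\max})<d_1+\gamma$ in the positive case is obtained much more directly by using $\pi(u_{\max})<d_1$ (any horizontal edge on the maximal Dyck path has $\pi<d_1$) together with the relation $\pi(v_{\max})=\pi(u_{\max})+\gamma$ you already have; the detour through the ``diophantine argument'' is unnecessary.

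The genuine gap is in the weight claim. Your key geometric step---that the second-to-last north edge $v^{(q-1)}$ of the subpath sits immediately below $u_{\min}$, so that $u_{\min}$ is the \emph{only} east edge between $v^{(q-1)}$ and $v_{\max}$---is false in general, and the inequalities $\gamma\le d_1$, $\gamma\le d_2$ do not rescue it. The predecessor of $u_{\min}$ is vertical precisely when $\pi(u_{\min})<d_2$, whereas you only know $\pi(u_{\min})\in[d_1-\gamma,\,d_1)$. For a concrete failure take $(d_1,d_2)=(5,3)$ with $(p,q)=(3,2)$ in the negative case: then $\gamma=1$, $u_{\max}=u_1$, $v_{\max}=v_2$, the subpath is $u_1u_2v_1u_3u_4v_2$, and there are \emph{two} east edges $u_3,u_4$ between $v^{(1)}=v_1$ and $v_{\max}=v_2$. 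Your Step~4 identity $\omega\bigl(\overrightarrow{u_{\max}v^{(q-1)}}_\east\bigr)=q-\omega(u_{\min})$ then need not hold, since weight could a priori sit on $u_3$ as well; your contradiction does not fire without further argument.

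The paper's route for this part is both different and shorter. It first observes that the edge immediately preceding $v_{\max}$ has $\pi$-value $\pi(u_{\max})+d_1-\gamma<d_1$, hence is horizontal and equals $u_{\min}$. It then uses that $\omega(v_{\max})>0$---which follows because, by Lemma~4.11, $u_{\min}\in\sh(S_\north)$ and the only vertical edge of the subpath lying after $u_{\min}$ is $v_{\max}$, forcing $v_{\max}\in S_\north$. Now $u_{\min}$ and $v_{\max}$ are adjacent with $\omega(v_{\max})>0$, so compatibility of $\omega$ on the two-edge path $\overrightarrow{u_{\min}v_{\max}}$ forces $\omega(u_{\min})=0$. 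The positive case is handled symmetrically with $\omega(u_{\max})>0$ established the same way. This avoids any claim about how many east edges lie between $v^{(q-1)}$ and $v_{\max}$.
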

\begin{proof}
By definition, since $\omega$ is a tight grading, either $S_\east \subset \sh(S_\north)$ or $S_\north \subset \sh(S_\east)$ holds.   Assume $S_\east \subset \sh(S_\north)$, so by \Cref{rem: pos neg tight grading}, we are in the case where $\pp d_2 - \qq d_1 > 0$. 

The subpath $\mathfrak{s} = \overrightarrow{u_{\max}v_{\max}}$ contains all edges in $\sh(u_{\max})$, and its set of horizontal edges is precisely $\sh(v_{\max})$.  Moreover, note that $v_{\max} \notin \sh(S_\east)$.  Therefore $\mathfrak{s}$ contains exactly $p$ horizontal edges and contains at least $\qq + 1$ vertical edges.  Hence we have 
\begin{equation}\label{eqn: vertex label ineq}
  \pi(v_{\max}) - \pi(u_{\max}) =   |\mathfrak{s}_{\east}| d_2 -  (|\mathfrak{s}_{\north}| - 1) d_1 = \pp d_2  -  (|\mathfrak{s}_{\north}| - 1) d_1 \leq \pp d_2  -  \qq d_1 = \gamma\,.   
\end{equation}

As the edge $v_{\max}$ is vertical, we have $\pi(v_{\max}) \geq d_1$.  So 
$\pi(u_{\max}) \geq d_1 - \gamma$.  Moreover, since $\pi(v_{\max}) > \pi(u_{\max})$, we have 
$$p d_2 - (|\mathfrak{s}_{\north}| - 1) d_1 = \gamma - (|\mathfrak{s}_\north| - (q+1))d_1 > 0\,.$$ Then $\gamma \leq d_1$ implies that $|\mathfrak{s}_\north| = q+1$.  So the inequality in \Cref{eqn: vertex label ineq} is actually an equality, hence $\pi(u_{\max}) = \pi(v_{\max}) - \gamma$.  

Since $\pi(u_{\max}) \geq d_1 - \gamma \geq d_1 - d_2$, the edge $e$ immediately to the right of $u_{\max}$ must be vertical, as $\pi(e) \geq d_1$.  Hence $v_{\min}$ is immediately to the right of $u_{\max}$.  In order for $\omega$ to be compatible, we must have $\omega(v_{\min}) = 0$ since $\omega(u_{\max}) > 0$.  

The case $pd_2 - qd_1 < 0$ proceeds similarly, where we still consider the subpath $\mathfrak{s} = \overrightarrow{u_{\max}v_{\max}}$.  Now $\mathfrak{s}$ contains exactly $\qq$ vertical edges and at least $\pp + 1$ horizontal edges, so we have
\begin{equation}\label{eqn: vertex label ineq 2}
 \pi(v_{\max}) - \pi(u_{\max}) =  |\mathfrak{s}_\east| d_2 - (\qq - 1)d_1 \geq (\pp + 1)d_2 - (\qq - 1) d_1 =  d_1 + d_2 - \gamma\,.
\end{equation}

Since $\pi(v_{\max}) < d_1 + d_2$, we have $\pi(u_{\max}) < \gamma$.  As before, \Cref{eqn: vertex label ineq 2} also implies that $|\mathfrak{s}_\east| = p+1$, so $\pi(v_{\max}) = \pi(u_{\max}) + d_1 + d_2 - \gamma$.  The edge $e$ immediately preceding $v_{\max}$ has $\pi(e) = d_1 - \gamma < d_1$, so $e = u_{\min}$.  We then have $\omega(u_{\min}) = 0$ since $\omega(v_{\max}) > 0$. 
\end{proof}

The fact that certain edges must have value $0$ in the tight grading is needed later to demonstrate the Weyl symmetry in terms of tight gradings (see \Cref{section: weyl symmetry}).  

\begin{cor}\label{cor: length one partitions}
If $\pt_1 = (p)$ and $\pt_2 = (q)$, then $\lambda(\pt_1,\pt_2) = \gcd(p,q)$.  The corresponding tight gradings are such that the unique edges $u \in S_\east$ and $v \in S_\north$ satisfy $\pi(u) = \ell$ and $\pi(v) = d_1 + d_2 - \gcd(p,q) + \pi(u)$ for any $0 \leq \ell < \gcd(p,q)$.  \end{cor}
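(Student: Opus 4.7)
The plan is to invoke the enumerative interpretation of \Cref{thm: universal coefficient}(2), which expresses $\lambda((p),(q))$ as the number of tight gradings $\omega$ on any permissible $\mathcal{P}(d_1,d_2)$ with $S_\east = \{u\}$, $\omega(u) = q$ and $S_\north = \{v\}$, $\omega(v) = p$. I would first pick $(d_1,d_2)$ coprime satisfying $d_1 \geq p$, $d_2 \geq q$, and $pd_2 - qd_1 = -\gamma$, where $\gamma = \gcd(p,q)$. Writing $p = \gamma p'$ and $q = \gamma q'$ with $\gcd(p',q') = 1$, a B\'ezout solution of $d_1 q' - d_2 p' = 1$ exists and can be translated by multiples of $(p',q')$ to secure the size constraints; the same identity forces $\gcd(d_1,d_2) = 1$ automatically. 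Under this coprimality, $\pi$ is a bijection from the edges of $\mathcal{P}(d_1,d_2)$ onto $\{0,1,\ldots,d_1+d_2-1\}$, sending horizontal edges onto $[0, d_1)$ and vertical edges onto $[d_1, d_1+d_2)$, since at a vertex $(x,y)$ the maximal Dyck path proceeds north precisely when $d_2 x - d_1 y \geq d_1$.

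With this bijection, the negative case of \Cref{lem: tight grading bounding labels} applies directly: any tight grading $\omega$ counted by $\lambda((p),(q))$ must satisfy $\pi(u) = \ell$ for a unique $\ell \in [0,\gamma)$ and $\pi(v) = \ell + d_1 + d_2 - \gamma$. These conditions uniquely determine $u = u_\ell$ and $v = v_\ell$, yielding both the claimed positions and the upper bound $\lambda((p),(q)) \leq \gamma$.

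For the matching lower bound, for each $\ell \in [0,\gamma)$ I would construct the grading $\omega_\ell$ assigning $q$ to $u_\ell$, $p$ to $v_\ell$, and $0$ elsewhere, and verify it is tight. The combinatorial engine is the identity
\[
    \pi(v_\ell) - \pi(u_\ell) = d_2 \, |\overrightarrow{u_\ell v_\ell}_\east| - d_1 \, |\overrightarrow{u_\ell v_\ell}_\north| + d_1,
\]
valid whether or not $\overrightarrow{u_\ell v_\ell}$ wraps cyclically. Combined with $pd_2 - qd_1 = -\gamma$, it forces $|\overrightarrow{u_\ell v_\ell}_\east| = p+1$ and $|\overrightarrow{u_\ell v_\ell}_\north| = q$. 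Since $(u_\ell, v_\ell)$ is the only pair with $\omega_\ell(u')\omega_\ell(v') > 0$, compatibility is witnessed by taking $e$ to be the second east edge encountered along $\overrightarrow{u_\ell v_\ell}$: this gives $|\overrightarrow{e\, v_\ell}_\east| = p = \omega_\ell(\overrightarrow{e\, v_\ell}_\north)$. Tightness follows because $\sh(u_\ell)$ is exactly the set of north edges in $\overrightarrow{u_\ell v_\ell}$, which has cardinality $q = \omega_\ell(u_\ell)$, so $v_\ell \in \sh(u_\ell)$ and hence $S_\north \subseteq \sh(S_\east)$.

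The main obstacle I anticipate is this last verification: carefully handling the cyclic wrapping in $\overrightarrow{u_\ell v_\ell}$ and ensuring the witness $e$ really lies in the subpath and is distinct from $u_\ell$. Once this bookkeeping is in place, the assignment $\ell \mapsto \omega_\ell$ gives a bijection between $[0,\gamma)$ and the tight gradings being counted, yielding $\lambda((p),(q)) = \gamma = \gcd(p,q)$.
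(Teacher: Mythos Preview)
Your proposal is correct and follows essentially the same route as the paper: both invoke \Cref{lem: tight grading bounding labels} to pin down $\pi(u_{\max})$ and $\pi(v_{\max})$, after noting that these are the only edges with nonzero weight. The paper's proof is a single sentence leaving the converse (that each candidate $\omega_\ell$ really is a tight grading) implicit, whereas you spell out that verification via the identity for $\pi(v_\ell)-\pi(u_\ell)$ and an explicit compatibility witness; this extra detail is sound and fills a genuine gap in the paper's presentation.
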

\begin{proof}
This follows directly from \Cref{lem: tight grading bounding labels} by noting that $u_{\max}$ and $v_{\max}$ are the only edges with nonzero value in any such tight grading.
\end{proof}

In the next section, we describe a way to visualize tight gradings that captures these structural results.

\section{A pictorial description of tight gradings via tiling }\label{picto_tight}

In this section, we give a pictorial description of tight gradings as partial tilings bounded by the underlying Dyck path and a rectangular \emph{frame}.  This serves not only as a helpful tool in constructing tight gradings, but also allows for comparisons of the relative ``tightness'' of gradings.

Throughout this section, we fix $(d_1,d_2,\pp,\qq)$ such that $\gamma = |\pp d_2 - \qq d_1| = \gcd(\pp,\qq)$.

\subsection{Frames}\label{subsec: frames}

First we consider the maximal Dyck path $\calP(2d_1,2d_2)$.  By considering the doubled Dyck path, we eliminate the need to consider the cyclic nature of the compatibility conditions when constructing tight gradings.  Thus we consider $\calP(2d_1,2d_2)$ as an ``unwrapping'' of $\calP(d_1,d_2)$.   
\begin{exmp}
    As  a running example, let $(d_1,d_2,\pp,\qq)=(14,9,12,8)$. 
The maximal Dyck path $\calP(2d_1,2d_2)=\calP(28,18)$ is given in \Cref{fig: doubled dyck path}, along with the four points in $P$.  Note that this is two copies of the Dyck path $\calP(14,9)$ glued end-to-end.
\end{exmp}

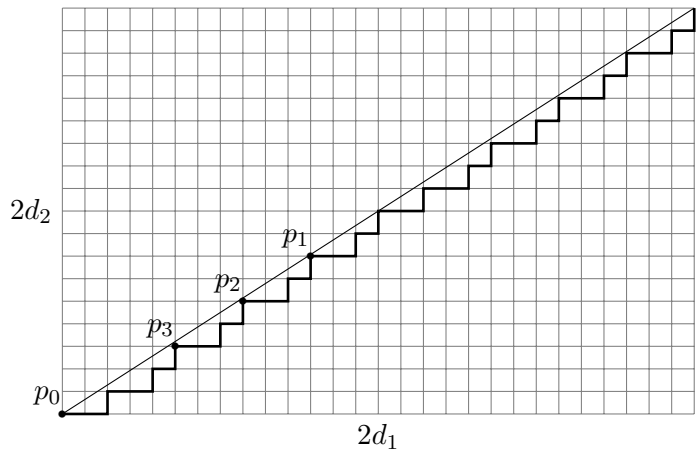
\begin{figure}
\begin{center}
\begin{tikzpicture}[scale=.3]
\draw[step=1,color=gray] (0,0) grid (2*14,2*9);
\draw (0,0)--(28,18);
\draw[line width=1, color=black] (0,0)--(2,0)--(2,1)--(4,1)--(4,2)--(5,2)--(5,3)--(7,3)--(7,4)--(8,4)--(8,5)--(10,5)--(10,6)--(11,6)--(11,7)--(13,7)--(13,8)--(14,8)--(14,9)--(14+2,9+0)--(14+2,9+1)--(14+4,9+1)--(14+4,9+2)--(14+5,9+2)--(14+5,9+3)--(14+7,9+3)--(14+7,9+4)--(14+8,9+4)--(14+8,9+5)--(14+10,9+5)--(14+10,9+6)--(14+11,9+6)--(14+11,9+7)--(14+13,9+7)--(14+13,9+8)--(14+14,9+8)--(14+14,9+9);
\draw (14,0) node[anchor=north]  {$2d_1$}; 
\draw (0,9) node[anchor=east]  {$2d_2$}; 
\draw (0,0) node {$\scriptscriptstyle\bullet$};
\draw (0,0)+(-1.7,0.8) node[anchor=west]  {$p_0$}; 
\draw (5,3) node {$\scriptscriptstyle\bullet$};
\draw (5,3)+(-1.7,0.8) node[anchor=west]  {$p_3$}; 
\draw (8,5) node {$\scriptscriptstyle\bullet$};
\draw (8,5)+(-1.7,0.8) node[anchor=west]  {$p_2$}; 
\draw (11,7) node {$\scriptscriptstyle\bullet$};
\draw (11,7)+(-1.7,0.8) node[anchor=west]  {$p_1$}; 
\end{tikzpicture}
\end{center}
\caption{$\mathcal{P}(2d_1, 2d_2)$}
\label{fig: doubled dyck path}
\end{figure}

Let $P=\{p_0,p_1,...,p_{\gamma-1}\}$ be the set of the $\gamma$ points on the first half of $\mathcal{P}(2d_1, 2d_2)$ that are closest to the main diagonal.  We label these such that $\pi(e_m) = m < \gamma$ where $e_m$ is the horizontal edge immediately to the right of $p_m$. Note that $p_0=(0,0)$ lies on the main diagonal, but none of the other $(\gamma-1)$ points in $P$ lie on the main diagonal.  In fact, the modular description of the Dyck path allows us to explicitly calculate the positions of the vertices $p_m$.  For $1 \leq m < \gamma$, we have
$$p_m = \left(d_1 - m\cdot \frac{p}{\gamma}, d_2 - m \cdot \frac{q}{\gamma}\right)\,.$$

\begin{rem}\label{rem: frame of tight}
    By \Cref{lem: tight grading consecutiveness} and \Cref{lem: tight grading bounding labels}, any tight grading on $\calP(d_1,d_2)$ is such that the left vertex of $u_{\max}$ (when $qd_1 > pd_2$) or the top vertex of $v_{\max}$ (when $pd_2 > qd_1$) is among the vertices in $P$.
\end{rem}

Fix some $\mathfrak{p}\in P$.  When $\qq d_1 > pd_2$, let $L_\mathfrak{p}$ be  the $(\pp +1)\times \qq$ rectangular frame whose southwestern vertex is at $\mathfrak{p}$. When $\pp d_2 > \qq d_1$, let $L_\mathfrak{p}$ be  the $\pp \times (\qq +1)$ rectangular frame whose northeastern vertex is at $\mathfrak{p}$.  Let $\mathcal{D}_\mathfrak{p}$ be the portion of the path $\calP(2d_1,2d_2)$ contained in $L_\mathfrak{p}$.

\begin{exmp}\label{pic_of_frame}
When $\mathfrak{p}= p_2 = (8,5)\in P$, the frame $L_\mathfrak{p}$ is given in \Cref{fig: frame}. 
\end{exmp}

\begin{figure}
\begin{center}
\begin{tikzpicture}[scale=.3]
\draw[step=1,color=gray] (0,0) grid (2*14,2*9);
\draw (0,0)--(28,18);
\draw[line width=1,color=black] (0,0)--(2,0)--(2,1)--(4,1)--(4,2)--(5,2)--(5,3)--(7,3)--(7,4)--(8,4)--(8,5)--(10,5)--(10,6)--(11,6)--(11,7)--(13,7)--(13,8)--(14,8)--(14,9)--(14+2,9+0)--(14+2,9+1)--(14+4,9+1)--(14+4,9+2)--(14+5,9+2)--(14+5,9+3)--(14+7,9+3)--(14+7,9+4)--(14+8,9+4)--(14+8,9+5)--(14+10,9+5)--(14+10,9+6)--(14+11,9+6)--(14+11,9+7)--(14+13,9+7)--(14+13,9+8)--(14+14,9+8)--(14+14,9+9);
\draw[line width=1.2, color=purple] (8,5)--(21,5)--(8+13,5+8)--(8,13)--(8,5);
\draw (6.7,5.5) node[anchor=west]  {$\mathfrak{p}$}; 
\draw (8,5) node {$\scriptscriptstyle\bullet$};
\draw (14,0) node[anchor=north]  {$2d_1$}; 
\draw (0,9) node[anchor=east]  {$2d_2$}; 
\end{tikzpicture}
\end{center}
\caption{A frame $L_\mathfrak{p}$.}
\label{fig: frame}
\end{figure}

\subsection{Construction of footprints associated to  $u_r^{(s)}$}
Let $\omega$ be a grading on $\calP(d_1,d_2)$ with $\omega(S_\north) = p$ and $\omega(S_\east) = q$. Let $\omega_{\mathfrak{p}}$ be the grading on $\mathcal{D}_{\mathfrak{p}}$ obtained by assigning an edge $e \in \calP(2d_1,2d_2)$ the value of $\omega$ on the unique edge $e'$ of $\calP(d_1,d_2)$ with $\pi(e') = \pi(e)$, then restricting to $\mathcal{D}_{\mathfrak{p}} \subset \calP(2d_1,2d_2)$.

 Let $u_1^{(1)}, u_1^{(2)}, \dots, u_1^{(\GG)}$ be the set of horizontal edges $u$ in $\mathcal{D}_\mathfrak{p}$ such that $\omega_{\mathfrak{p}}(\fph) > 0$, ordered from left-to-right.  For $s \in \{1,2,\dots,\GG\}$, let $\rho_s$ denote the number of horizontal edges strictly to the left of $u_1^{(s)}$ in $\mathcal{D}_\mathfrak{p}$.  We define $\mathcal{F}_{\east} (L_\mathfrak{p},\omega)$, which is a set of unit horizontal edges inside $L_{\mathfrak{p}} \cap (\R \times \Z)$ given by $$\mathcal{F}_{\east}(L_\mathfrak{p},\omega)=\{\fph_\fpi^{(\fpj)}\, :\, s \in\{1,2,...,\GG\} \text{ and }\fpi\in\{1,2,..., \rho_s + 1\}\}\,.$$
The \emph{footprint} (or \emph{tile}) of each $\fph_\fpi^{(\fpj)}$ is defined to be the $1\times \omega(\fph_1^{(\fpj)})$ rectangle whose south edge is  $\fph_\fpi^{(\fpj)}$. For $r > 1$, the edge $u_r^{(s)}$ is defined via the following double inductive process:

\noindent $\bullet$ The base case: By \Cref{lem: tight grading bounding labels}, we have $\rho_1 = 0$.  Thus, the base case of $r = 1$ consists of the construction of $u_1^{(1)}$ as above.

\noindent $\bullet$ The outer inductive hypothesis: suppose that $\fph_1^{(s)}, \fph_2^{(s)}, \dots, \fph_{\rho_s + 1}^{(s)}$ are constructed for some $s \in\{1,2,\dots,\GG\}$. 

\noindent $\bullet$ The inner inductive hypothesis: Suppose that $\fph_{1}^{(s+1)}, \fph_{2}^{(s+1)}, \dots, \fph_{\ell}^{(s+1)}$ are constructed for some $\ell \in\{1,2,\dots,\rho_{s+1} + 1\}$.

\noindent $\bullet$ The base step of the inner induction : construct $\fph_1^{(s+1)}$, as described above.

\noindent $\bullet$ The inductive step of the inner induction : $\fph_{\fpi+1}^{(s+1)}$ is uniquely determined by the following conditions:
\begin{enumerate}
    \item $\fph_{\fpi+1}^{(s+1)}$ in the column directly to the left of $\fph_{\fpi}^{(s+1)}$;
    \item Either 
    \begin{enumerate}
        \item  $\fph_{\fpi+1}^{(s+1)}$ is Northwest of $\fph_{\fpi}^{(s+1)}$, in which case its footprint is colored cyan; or  
        \item $\fph_{\fpi+1}^{(s+1)}$ is directly to the West of $\fph_{\fpi}^{(s+1)}$, in which case its footprint is colored dark blue; 
    \end{enumerate}
    \item and $\fph_{\fpi+1}^{(s+1)}$ is as far South as possible, while the interior of the footprint of $\fph_{\fpi+1}^{(s+1)}$ does not overlap with the footprint of $\fph_{r'}^{(s')}$ for $r' \leq  r$ or $s' \leq s$.
\end{enumerate}

\begin{defn}
    We define $\text{DarkBlue}(w)$ as the union of the interior of dark blue tiles, and $\text{Cyan}(w)$  as the union of the interior of cyan tiles.
\end{defn}

\begin{exmp}\label{ex_of_blue_green}
Continue from \Cref{pic_of_frame} above. Consider a grading $\omega$ on $\calP(d_1,d_2)$ such that $S_\mathbf{E}(\omega_{\mathfrak{p}})=\{\fph_1^{(1)},\fph_1^{(2)},\fph_1^{(3)},\fph_1^{(4)} \}$ and $\omega_{\mathfrak{p}}(\fph_1^{(\fpj)})=2$ for $\fpj\in\{1,2,3,4\}$, where $\fph_1^{(1)}$ is the horizontal line segment joining $(8,5)$ and $(9,5)$, $\fph_1^{(2)}$ joining $(9,5)$ and $(10,5)$, $\fph_1^{(3)}$ joining $(10,6)$ and $(11,6)$, and $\fph_1^{(4)}$ joining $(14,9)$ and $(15,9)$. Then
$$\aligned
&\fph_2^{(2)} \text{ joins }(8,7)\text{ and }(9,7);\\
&\\
&\fph_2^{(3)} \text{ joins }(9,7)\text{ and }(10,7);\\
&\fph_3^{(3)} \text{ joins }(8,9)\text{ and }(9,9);\\
&\\
&\fph_\ell^{(4)} \text{ joins }(15-\ell,9)\text{ and }(16-\ell,9) \text{ for } \ell = 2,3,4,5,6;\\
&\fph_7^{(4)} \text{ joins }(8,11)\text{ and }(9,11).\\
\endaligned$$
The following picture shows the interior of the footprint of each $\fph_{\fpi}^{(\fpj)}$. 
\end{exmp}

\begin{center}
\begin{tikzpicture}[scale=.4]
\draw[step=1,color=gray] (0,0) grid (13,8);
\draw[line width=1.2, color=purple] (8-8,5-5)--(21-8,5-5)--(13,8)--(0,8)--(0,0);
\draw[line width=1.5,color=black] (0,0)--(2,0)--(2,1)--(3,1)--(3,2)--(5,2)--(5,3)--(6,3)--(6,4)--(8,4)--(8,5)--(10,5)--(10,6)--(11,6)--(11,7)--(13,7)--(13,8);
\draw (0.1,-0.4) node[anchor=east]  {$\mathfrak{p}$}; 
\fill[cyan] (1.2,0.2)--(1.2,1.8)--(1.8,1.8)--(1.8,0.2)--(1.2,0.2);
\fill[cyan] (1.2,2+0.2)--(1.2,2+1.8)--(1.8,2+1.8)--(1.8,2+0.2)--(1.2,2+0.2);
\fill[cyan] (1+1.2,1+0.2)--(1+1.2,1+1.8)--(1+1.8,1+1.8)--(1+1.8,1+0.2)--(1+1.2,1+0.2);
\fill[cyan] (2+1.2+3-6 ,4+0.2-4)--(5+1.2-6,4+1.8-4)--(5+1.8-6,4+1.8-4)--(5+1.8-6,4+0.2-4)--(5+1.2-6,4+0.2-4);
\fill[cyan] (2+1.2+3-6 ,4+0.2-2)--(5+1.2-6,4+1.8-2)--(5+1.8-6,4+1.8-2)--(5+1.8-6,4+0.2-2)--(5+1.2-6,4+0.2-2);
\fill[cyan] (2+1.2+3-6 ,4+0.2)--(5+1.2-6,4+1.8)--(5+1.8-6,4+1.8)--(5+1.8-6,4+0.2)--(5+1.2-6,4+0.2);
\fill[cyan] (0.2,0.2)--(0.2,1.8)--(0.8,1.8)--(0.8,0.2)--(0.2,0.2);
\fill[cyan] (0.2,2+0.2)--(0.2,2+1.8)--(0.8,2+1.8)--(0.8,2+0.2)--(0.2,2+0.2);
\fill[cyan] (0.2,4+0.2)--(0.2,4+1.8)--(0.8,4+1.8)--(0.8,4+0.2)--(0.2,4+0.2);
\fill[cyan] (0.2,6+0.2)--(0.2,6+1.8)--(0.8,6+1.8)--(0.8,6+0.2)--(0.2,6+0.2);
\fill[cyan] (1.2,0.2)--(1.2,1.8)--(1.8,1.8)--(1.8,0.2)--(1.2,0.2);
\fill[cyan] (1.2,2+0.2)--(1.2,2+1.8)--(1.8,2+1.8)--(1.8,2+0.2)--(1.2,2+0.2);
\fill[blue] (1.2,4+0.2)--(1.2,4+1.8)--(1.8,4+1.8)--(1.8,4+0.2)--(1.2,4+0.2);
\fill[cyan] (1+1.2,1+0.2)--(1+1.2,1+1.8)--(1+1.8,1+1.8)--(1+1.8,1+0.2)--(1+1.2,1+0.2);
\fill[blue] (1+1.2,4+0.2)--(1+1.2,4+1.8)--(1+1.8,4+1.8)--(1+1.8,4+0.2)--(1+1.2,4+0.2);
\fill[blue] (3+1.2,4+0.2)--(3+1.2,4+1.8)--(3+1.8,4+1.8)--(3+1.8,4+0.2)--(3+1.2,4+0.2);
\fill[blue] (2+1.2,4+0.2)--(2+1.2,4+1.8)--(2+1.8,4+1.8)--(2+1.8,4+0.2)--(2+1.2,4+0.2);
\fill[blue] (4+1.2,4+0.2)--(4+1.2,4+1.8)--(4+1.8,4+1.8)--(4+1.8,4+0.2)--(4+1.2,4+0.2);
\fill[cyan] (2+1.2+3 ,4+0.2)--(5+1.2,4+1.8)--(5+1.8,4+1.8)--(5+1.8,4+0.2)--(5+1.2,4+0.2);
\end{tikzpicture}
\end{center}

\subsection{Construction of footprints associated to  $v_{\fpi}^{(\fpj)}$}
Let $v_1^{(1)}, v_1^{(2)}, \dots, v_1^{(\GG')}$ be the set of vertical edges $u$ in $\mathcal{D}_\mathfrak{p}$ such that $\omega_{\mathfrak{p}}(\fph) > 0$, ordered from top-to-bottom.  For $s \in \{1,2,\dots,\GG\}$, let $\rho'_s$ denote the number of vertical edges strictly to the above of $v_1^{(s)}$ in $\mathcal{D}_\mathfrak{p}$.  We define $\mathcal{F}_{\east} (L_\mathfrak{p},\omega)$, which is a set of unit vertical edges inside $L_{\mathfrak{p}} \cap (\Z \times \R)$ given by $$\mathcal{F}_{\east}(L_\mathfrak{p},\omega)=\{\fpv_\fpi^{(\fpj)}\, :\, s \in\{1,2,...,\GG'\} \text{ and }\fpi\in\{1,2,..., \rho_s' + 1\}\}$$ in $L_\mathfrak{p}$. 
The \emph{footprint} (or \emph{tile}) of each $v_{\fpi}^{(j)}$ is defined to be the $\omega(v_1^{(\fpj)})\times 1$ rectangle whose east edge is  $v_{\fpi}^{(\fpj)}$. Construct $v_{\fpi}^{(\fpj)}$ in a similar fashion:
\begin{enumerate}
    \item $\fpv_{\fpi+1}^{(s+1)}$ in the row directly above $\fpv_{\fpi}^{(s+1)}$;
    \item Either 
    \begin{enumerate}
        \item  $\fpv_{\fpi+1}^{(s+1)}$ is Northwest of $\fpv_{\fpi}^{(s+1)}$, in which case its footprint is colored red; or
        \item $\fpv_{\fpi+1}^{(s+1)}$ is directly North of $\fpv_{\fpi}^{(s+1)}$, in which case its footprint is colored dark red; 
    \end{enumerate}
    \item and $\fpv_{\fpi+1}^{(s+1)}$ is as far East as possible, while the interior of the footprint of $\fpv_{\fpi+1}^{(s+1)}$ does not overlap with the footprint of $\fpv_{r'}^{(s')}$ for $r' \leq  r$ or $s' \leq s$.
\end{enumerate}

\begin{defn}
    We define $\text{Red}(w)$ as the union of the interior of red tiles, and $\text{DarkRed}(w)$  as the union of the interior of dark red tiles.
\end{defn}

\begin{exmp}\label{ex_of_red_orange}
Continue from Example~\ref{pic_of_frame}. Consider a grading $\omega$ on $\calP(d_1,d_2)$ where $S_\mathbf{N}(\omega_\mathfrak{p})=\{v_1^{(1)},v_1^{(2)},v_1^{(3)},v_1^{(4)} \}$ and $\omega_\mathfrak{p}(v_1^{(\fpj)})=3$ for $\fpj\in\{1,2,3,4\}$, where $v_1^{(1)}$ is the vertical line segment joining $(21,12)$ and $(21,13)$, $v_1^{(2)}$ joins $(19,11)$ and $(19,12)$, $v_1^{(3)}$ joins $(18,10)$ and $(18,11)$, and $v_1^{(4)}$ joins $(14,8)$ and $(14,9)$.

\begin{center}
\begin{tikzpicture}[scale=.4]
\draw[step=1,color=gray] (0,0) grid (13,8);
\draw[line width=1.2, color=purple] (8-8,5-5)--(21-8,5-5)--(13,8)--(0,8)--(0,0);
\draw[line width=1.5,color=black] (0,0)--(2,0)--(2,1)--(3,1)--(3,2)--(5,2)--(5,3)--(6,3)--(6,4)--(8,4)--(8,5)--(10,5)--(10,6)--(11,6)--(11,7)--(13,7)--(13,8);
\draw (0.1,-0.4) node[anchor=east]  {$\mathfrak{p}$}; 
\fill[red] (3+0.2, 3+0.2)--(3+3-0.2, 3+0.2)--(3+3-0.2, 4-0.2)--(3+0.2, 4-0.2)--(3+0.2, 3+0.2);
\fill[maroon] (3+0.2, 3+1+0.2)--(3+3-0.2, 3+1+0.2)--(3+3-0.2, 4+1-0.2)--(3+0.2, 4+1-0.2)--(3+0.2, 3+1+0.2);
\fill[maroon] (3+0.2, 3+2+0.2)--(3+3-0.2, 3+2+0.2)--(3+3-0.2, 4+2-0.2)--(3+0.2, 4+2-0.2)--(3+0.2, 3+2+0.2);
\fill[red] (2+5+0.2, 1+4+0.2)--(2+5+3-0.2, 1+4+0.2)--(2+5+3-0.2, 1+4+1-0.2)--(2+5+0.2, 1+4+1-0.2)--(2+5+0.2, 1+4+0.2);
\fill[red] (-3+5+0.2, 2+4+0.2)--(-3+5+3-0.2, 2+4+0.2)--(-3+5+3-0.2, 2+4+1-0.2)--(-3+5+0.2, 2+4+1-0.2)--(-3+5+0.2, 2+4+0.2);
\fill[red] (5+0.2, 2+4+0.2)--(5+3-0.2, 2+4+0.2)--(5+3-0.2, 2+4+1-0.2)--(5+0.2, 2+4+1-0.2)--(5+0.2, 2+4+0.2);
\fill[red] (3+5+0.2, 2+4+0.2)--(3+5+3-0.2, 2+4+0.2)--(3+5+3-0.2, 2+4+1-0.2)--(3+5+0.2, 2+4+1-0.2)--(3+5+0.2, 2+4+0.2);
\fill[red] (-3-1+5+0.2, 2+1+4+0.2)--(-3-1+5+3-0.2, 2+1+4+0.2)--(-3-1+5+3-0.2, 2+1+4+1-0.2)--(-3-1+5+0.2, 2+1+4+1-0.2)--(-3-1+5+0.2, 2+1+4+0.2);
\fill[red] (-1+5+0.2, 2+1+4+0.2)--(-1+5+3-0.2, 2+1+4+0.2)--(-1+5+3-0.2, 2+1+4+1-0.2)--(-1+5+0.2, 2+1+4+1-0.2)--(-1+5+0.2, 2+1+4+0.2);
\fill[red] (3-1+5+0.2, 2+1+4+0.2)--(3-1+5+3-0.2, 2+1+4+0.2)--(3-1+5+3-0.2, 2+1+4+1-0.2)--(3-1+5+0.2, 2+1+4+1-0.2)--(3-1+5+0.2, 2+1+4+0.2);
\fill[red] (6-1+5+0.2, 2+1+4+0.2)--(6-1+5+3-0.2, 2+1+4+0.2)--(6-1+5+3-0.2, 2+1+4+1-0.2)--(6-1+5+0.2, 2+1+4+1-0.2)--(6-1+5+0.2, 2+1+4+0.2);
\end{tikzpicture}
\end{center}
\end{exmp}

\subsection{Reinterpretation of tight gradings}

We now describe tight gradings purely in terms of their footprints.  This gives a visual tool for constructing and classifying tight gradings.
\begin{prop}\label{redblue}
  The grading $\omega$ is tight if and only if there is some $\mathfrak{p} \in P$ such that $\omega_{\mathfrak{p}}(S_\north) = p$, $\omega_{\mathfrak{p}}(S_\east) = q$, and 
  $$\mathrm{Red}(w)\cap \mathrm{Cyan}(w)=\mathrm{Red}(w)\cap \mathrm{DarkBlue}(w)=\mathrm{DarkRed}(w)\cap \mathrm{Cyan}(w)=\varnothing.$$
\end{prop}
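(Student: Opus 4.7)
The plan is to prove the biconditional by translating each direction into combinatorial statements linking tightness of $\omega$ to the inductive footprint construction.

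For the $(\Rightarrow)$ direction, suppose $\omega$ is tight. By \Cref{lem: tight grading consecutiveness} and \Cref{lem: tight grading bounding labels} together with \Cref{rem: frame of tight}, there exists a unique $\mathfrak{p} \in P$ such that the support of $\omega$ is entirely contained in the subpath $\mathcal{D}_\mathfrak{p}$ of the doubled Dyck path; in particular $\omega_{\mathfrak{p}}(S_\north) = \pp$ and $\omega_{\mathfrak{p}}(S_\east) = \qq$. I would then argue by contradiction: if any of the three intersections $\mathrm{Red} \cap \mathrm{Cyan}$, $\mathrm{Red} \cap \mathrm{DarkBlue}$, or $\mathrm{DarkRed} \cap \mathrm{Cyan}$ were nonempty, the resulting overlap would produce a pair $(u, v) \in S_\east \times S_\north$ for which the minimal-length shadow witness from \Cref{def: shadow} cannot simultaneously satisfy the compatibility condition \eqref{0407df:comp} and the shadow-containment clause $S_\east \subseteq \sh(S_\north)$ (or $S_\north \subseteq \sh(S_\east)$) of \Cref{def: tight grading}.

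For the $(\Leftarrow)$ direction, note that $\mathfrak{p} \in P$ directly supplies $\pp d_2 - \qq d_1 = \pm \gcd(\pp, \qq)$ by the definition of $P$, so the second clause of \Cref{def: tight grading} is automatic. What remains is to check compatibility of $\omega$ and one of the shadow inclusions. The inductive construction of footprints is essentially a step-by-step placement of shadows: a horizontal tile $\fph_r^{(s)}$ records that the unit east edge at its base lies in the vertical shadow of $\fph_1^{(s)}$, and symmetrically for vertical tiles. Thus the absence of the three forbidden overlaps is precisely the statement that horizontal and vertical shadows nest consistently inside $L_\mathfrak{p}$, which yields both compatibility and the desired shadow containment $S_\east \subseteq \sh(S_\north)$ (in the $\qq d_1 > \pp d_2$ case) or its symmetric counterpart.

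The main obstacle is the bookkeeping required to match each of the four tile colors to the corresponding geometric situation in the shadow definitions. Cyan versus dark blue records whether the inductive step moves northwest or directly west, corresponding to whether an intervening vertical step of $\mathcal{D}_\mathfrak{p}$ is encountered; red versus dark red is the symmetric distinction for vertical edges. A clean way to organize the argument is a double induction on $s$ and $r$ mirroring the double induction used to define the tiles, at each step verifying that the placement rule agrees with the minimal-length choice of $\edge$ in \Cref{def: shadow} and that each forbidden color overlap is equivalent to a specific breakdown of tightness.
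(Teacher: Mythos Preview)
Your overall strategy matches the paper's: establish the frame via \Cref{lem: tight grading consecutiveness}, \Cref{lem: tight grading bounding labels}, and \Cref{rem: frame of tight}, then argue the forward direction by contradiction and the backward direction by direct verification. There are, however, two points where the paper's proof is sharper than your plan.

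First, for the forward direction the paper does \emph{not} treat the three forbidden overlaps in parallel. It only analyzes $\mathrm{Red}(\omega)\cap\mathrm{Cyan}(\omega)\neq\varnothing$ in detail, pinning down a violating pair $(u,v)$ by choosing a ``Dyck rectangle'' $T$ (a lattice rectangle whose southwest and northeast corners lie on the Dyck path) with $\mathrm{Red}\cap\mathrm{Cyan}\cap T$ equal to the unit square at its northwest corner; the edges $u,v$ at the southwest and northeast corners of $T$ then fail \emph{both} alternatives of \eqref{0407df:comp}. The other two overlaps are dispatched by a reduction: every dark blue tile has a cyan tile directly to its right (and symmetrically for dark red), so $\mathrm{Red}\cap\mathrm{DarkBlue}\neq\varnothing$ already forces $\mathrm{Red}\cap\mathrm{Cyan}\neq\varnothing$. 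Your plan to run a separate argument for each of the three cases, together with the proposed double induction on $(s,r)$, would likely work but is more labor than needed once this reduction is in hand.

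Second, two small corrections. In the backward direction you write that the absence of overlaps yields $S_\east\subseteq\sh(S_\north)$ in the case $\qq d_1 > \pp d_2$, but by \Cref{rem: pos neg tight grading} that inequality corresponds to $S_\north\subseteq\sh(S_\east)$. And your phrasing that an overlap prevents a pair $(u,v)$ from ``simultaneously satisfying compatibility and shadow-containment'' is slightly off: the overlap witnesses that \emph{no} edge $\edge$ on $\overrightarrow{uv}$ satisfies either clause of \eqref{0407df:comp}, so compatibility alone fails; the shadow-containment clause of \Cref{def: tight grading} is not what breaks here.
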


\begin{rem} Note that dark blue and dark red footprints are allowed to intersect.  We depict this intersection in purple. We combine the purple regions according to the following rule: the intersection of $u_{r_1}^{(s_1)}$ and $v_{r_2}^{(s_2)}$ is in the same purple component as the intersection of $u_{r_1'}^{(s_1')}$ and $v_{r_2'}^{(s_2')}$ if and only if $s_1 = s_1'$ and $s_2 = s_2'$.
\end{rem}

Before presenting the proof of \Cref{redblue}, we illustrate the construction in some examples.

\begin{exmp}
Continue from Examples~\ref{ex_of_blue_green} and \ref{ex_of_red_orange}. See the picture below. Since $$\mathrm{Red}(w)\cap \mathrm{Cyan}(w)=\mathrm{Red}(w)\cap \mathrm{DarkBlue}(w)=\mathrm{DarkRed}(w)\cap \mathrm{Cyan}(w)=\varnothing,$$ the grading $\omega$ is tight.

\begin{center}
\begin{tikzpicture}[scale=.4]
\draw[step=1,color=gray] (0,0) grid (13,8);
\draw[line width=1.2, color=purple] (8-8,5-5)--(21-8,5-5)--(13,8)--(0,8)--(0,0);
\draw[line width=1.5,color=black] (0,0)--(2,0)--(2,1)--(3,1)--(3,2)--(5,2)--(5,3)--(6,3)--(6,4)--(8,4)--(8,5)--(10,5)--(10,6)--(11,6)--(11,7)--(13,7)--(13,8);
\draw (0.1,-0.4) node[anchor=east]  {$\mathfrak{p}$}; 
\fill[cyan] (1.2,0.2)--(1.2,1.8)--(1.8,1.8)--(1.8,0.2)--(1.2,0.2);
\fill[cyan] (1.2,2+0.2)--(1.2,2+1.8)--(1.8,2+1.8)--(1.8,2+0.2)--(1.2,2+0.2);
\fill[cyan] (1+1.2,1+0.2)--(1+1.2,1+1.8)--(1+1.8,1+1.8)--(1+1.8,1+0.2)--(1+1.2,1+0.2);
\fill[cyan] (2+1.2+3-6 ,4+0.2-4)--(5+1.2-6,4+1.8-4)--(5+1.8-6,4+1.8-4)--(5+1.8-6,4+0.2-4)--(5+1.2-6,4+0.2-4);
\fill[cyan] (2+1.2+3-6 ,4+0.2-2)--(5+1.2-6,4+1.8-2)--(5+1.8-6,4+1.8-2)--(5+1.8-6,4+0.2-2)--(5+1.2-6,4+0.2-2);
\fill[cyan] (2+1.2+3-6 ,4+0.2)--(5+1.2-6,4+1.8)--(5+1.8-6,4+1.8)--(5+1.8-6,4+0.2)--(5+1.2-6,4+0.2);
\fill[cyan] (0.2,0.2)--(0.2,1.8)--(0.8,1.8)--(0.8,0.2)--(0.2,0.2);
\fill[cyan] (0.2,2+0.2)--(0.2,2+1.8)--(0.8,2+1.8)--(0.8,2+0.2)--(0.2,2+0.2);
\fill[cyan] (0.2,4+0.2)--(0.2,4+1.8)--(0.8,4+1.8)--(0.8,4+0.2)--(0.2,4+0.2);
\fill[cyan] (0.2,6+0.2)--(0.2,6+1.8)--(0.8,6+1.8)--(0.8,6+0.2)--(0.2,6+0.2);
\fill[cyan] (1.2,0.2)--(1.2,1.8)--(1.8,1.8)--(1.8,0.2)--(1.2,0.2);
\fill[cyan] (1.2,2+0.2)--(1.2,2+1.8)--(1.8,2+1.8)--(1.8,2+0.2)--(1.2,2+0.2);
\fill[blue] (1.2,4+0.2)--(1.2,4+1.8)--(1.8,4+1.8)--(1.8,4+0.2)--(1.2,4+0.2);
\fill[cyan] (1+1.2,1+0.2)--(1+1.2,1+1.8)--(1+1.8,1+1.8)--(1+1.8,1+0.2)--(1+1.2,1+0.2);
\fill[blue] (1+1.2,4+0.2)--(1+1.2,4+1.8)--(1+1.8,4+1.8)--(1+1.8,4+0.2)--(1+1.2,4+0.2);
\fill[cyan] (2+1.2+3 ,4+0.2)--(5+1.2,4+1.8)--(5+1.8,4+1.8)--(5+1.8,4+0.2)--(5+1.2,4+0.2);
\fill[red] (3+0.2, 3+0.2)--(3+3-0.2, 3+0.2)--(3+3-0.2, 4-0.2)--(3+0.2, 4-0.2)--(3+0.2, 3+0.2);

\fill[violet] (3+0.2, 3+1+0.2)--(3+3-0.2, 3+1+0.2)--(3+3-0.2, 5+1-0.2)--(3+0.2, 5+1-0.2)--(3+0.2, 3+1+0.2);

\fill[red] (2+5+0.2, 1+4+0.2)--(2+5+3-0.2, 1+4+0.2)--(2+5+3-0.2, 1+4+1-0.2)--(2+5+0.2, 1+4+1-0.2)--(2+5+0.2, 1+4+0.2);
\fill[red] (-3+5+0.2, 2+4+0.2)--(-3+5+3-0.2, 2+4+0.2)--(-3+5+3-0.2, 2+4+1-0.2)--(-3+5+0.2, 2+4+1-0.2)--(-3+5+0.2, 2+4+0.2);
\fill[red] (5+0.2, 2+4+0.2)--(5+3-0.2, 2+4+0.2)--(5+3-0.2, 2+4+1-0.2)--(5+0.2, 2+4+1-0.2)--(5+0.2, 2+4+0.2);
\fill[red] (3+5+0.2, 2+4+0.2)--(3+5+3-0.2, 2+4+0.2)--(3+5+3-0.2, 2+4+1-0.2)--(3+5+0.2, 2+4+1-0.2)--(3+5+0.2, 2+4+0.2);
\fill[red] (-3-1+5+0.2, 2+1+4+0.2)--(-3-1+5+3-0.2, 2+1+4+0.2)--(-3-1+5+3-0.2, 2+1+4+1-0.2)--(-3-1+5+0.2, 2+1+4+1-0.2)--(-3-1+5+0.2, 2+1+4+0.2);
\fill[red] (-1+5+0.2, 2+1+4+0.2)--(-1+5+3-0.2, 2+1+4+0.2)--(-1+5+3-0.2, 2+1+4+1-0.2)--(-1+5+0.2, 2+1+4+1-0.2)--(-1+5+0.2, 2+1+4+0.2);
\fill[red] (3-1+5+0.2, 2+1+4+0.2)--(3-1+5+3-0.2, 2+1+4+0.2)--(3-1+5+3-0.2, 2+1+4+1-0.2)--(3-1+5+0.2, 2+1+4+1-0.2)--(3-1+5+0.2, 2+1+4+0.2);
\fill[red] (6-1+5+0.2, 2+1+4+0.2)--(6-1+5+3-0.2, 2+1+4+0.2)--(6-1+5+3-0.2, 2+1+4+1-0.2)--(6-1+5+0.2, 2+1+4+1-0.2)--(6-1+5+0.2, 2+1+4+0.2);
\end{tikzpicture}
\end{center}

\end{exmp}

\begin{exmp}
Consider another grading $\omega$ on  $\calP(d_1,d_2)$.  Let $S_\east$ consist of the horizontal line  segment $\fph_1^{(1)}$  joining $(8,5)$ and $(9,5)$, $\fph_1^{(2)}$ joining $(9,5)$ and $(10,5)$, $\fph_1^{(3)}$ joining $(10,6)$ and $(11,6)$, $\fph_1^{(4)}$ joining $(11,7)$ and $(12,7)$, and $\fph_1^{(4)}$ joining $(14,9)$ and $(15,9)$. Let $S_\north$ consist of the vertical line segment $v_1^{(1)}$ joining $(21,12)$ and $(21,13)$, $v_1^{(2)}$ joining $(19,11)$ and $(19,12)$, $v_1^{(3)}$ joining $(18,10)$ and $(18,11)$, $v_1^{(4)}$ joining $(14,8)$ and $(14,9)$, and $v_1^{(5)}$ joining $(13,7)$ and $(13,8)$.  We then set 

$$\omega(e) = \begin{cases} 1 & \text{ if } e = \fph_1^{(3)},\; \fph_1^{(4)}, \;v_1^{(5)};\\
2 & \text{ if } e = \fph_1^{(1)}, \;\fph_1^{(2)},\; \fph_1^{(5)}, \;\fpv_1^{(2)};\\
3 & \text{ if } e = \fpv_1^{(1)}, \;\fpv_1^{(3)},\; \fpv_1^{(4)}\,.\\
\end{cases}$$
The resulting tiling is shown below.

\begin{center}
\begin{tikzpicture}[scale=.4]
\draw[step=1,color=gray] (0,0) grid (13,8);
\draw[line width=1.2, color=purple] (8-8,5-5)--(21-8,5-5)--(13,8)--(0,8)--(0,0);
\draw[line width=1.5,color=black] (0,0)--(2,0)--(2,1)--(3,1)--(3,2)--(5,2)--(5,3)--(6,3)--(6,4)--(8,4)--(8,5)--(10,5)--(10,6)--(11,6)--(11,7)--(13,7)--(13,8);
\draw (0.1,-0.4) node[anchor=east]  {$\mathfrak{p}$}; 
\fill[cyan] (1.2,0.2)--(1.2,1.8)--(1.8,1.8)--(1.8,0.2)--(1.2,0.2);
\fill[cyan] (0.2,0.2)--(0.2,1.8)--(0.8,1.8)--(0.8,0.2)--(0.2,0.2);
\fill[cyan] (0.2,2+0.2)--(0.2,2+1.8)--(0.8,2+1.8)--(0.8,2+0.2)--(0.2,2+0.2);
\fill[cyan] (0.2,4+0.2)--(0.2,4+0.8)--(0.8,4+0.8)--(0.8,4+0.2)--(0.2,4.2);
\fill[cyan] (0.2,4+1+0.2)--(0.2,4+1.8)--(0.8,4+1.8)--(0.8,5+0.2)--(0.2,1+4.2);
\fill[cyan] (0.2,6+0.2)--(0.2,6+1.8)--(0.8,6+1.8)--(0.8,6+0.2)--(0.2,6+0.2);
\fill[cyan] (1.2,0.2)--(1.2,1.8)--(1.8,1.8)--(1.8,0.2)--(1.2,0.2);
\fill[cyan] (1.2,2+0.2)--(1.2,2+0.8)--(1.8,2+0.8)--(1.8,2+0.2)--(1.2,2.2);
\fill[cyan] (1.2,2+1+0.2)--(1.2,2+1.8)--(1.8,2+1.8)--(1.8,3+0.2)--(1.2,1+2.2);
\fill[blue] (1.2,4+0.2)--(1.2,4+1.8)--(1.8,4+1.8)--(1.8,4+0.2)--(1.2,4+0.2);
\fill[cyan] (1+1.2,1+0.2)--(1+1.2,1.8)--(1+1.8,1.8)--(1+1.8,1+0.2)--(1+1.2,1+0.2);
\fill[blue] (1+1.2,1+1+0.2)--(1+1.2,1+1.8)--(1+1.8,1+1.8)--(1+1.8,1+1+0.2)--(1+1.2,1+1+0.2);
\fill[cyan] (2+1.2,1+1+0.2)--(2+1.2,1+1.8)--(2+1.8,1+1.8)--(2+1.8,1+1+0.2)--(2+1.2,1+1+0.2);
\fill[cyan] (2+1.2+3 ,4+0.2)--(5+1.2,4+1.8)--(5+1.8,4+1.8)--(5+1.8,4+0.2)--(5+1.2,4+0.2);
\fill[red] (3+0.2, 3+0.2)--(3+3-0.2, 3+0.2)--(3+3-0.2, 4-0.2)--(3+0.2, 4-0.2)--(3+0.2, 3+0.2);
\fill[red] (4+0.2, 2+0.2)--(5-0.2, 2+0.2)--(5-0.2, 3-0.2)--(4+0.2, 3-0.2)--(4+0.2, 2+0.2);
\fill[red] (4+0.2-2, 2+0.2+1)--(5-2-0.2, 2+1+0.2)--(5-2-0.2, 3+1-0.2)--(4-2+0.2, 3+1-0.2)--(4-2+0.2, 2+1+0.2);
\fill[violet] (3+0.2, 3+1+0.2)--(3+3-0.2, 3+1+0.2)--(3+3-0.2, 5+1-0.2)--(3+0.2, 5+1-0.2)--(3+0.2, 3+1+0.2);
\fill[violet] (1+1.2,4+0.2)--(1+1.2,4+1.8)--(1+1.8,4+1.8)--(1+1.8,4+0.2)--(1+1.2,4+0.2);
\fill[red] (2+5+0.2, 1+4+0.2)--(2+5+3-0.2, 1+4+0.2)--(2+5+3-0.2, 1+4+1-0.2)--(2+5+0.2, 1+4+1-0.2)--(2+5+0.2, 1+4+0.2);
\fill[maroon] (3+0.2, 6+0.2)--(3+3-0.2, 6+0.2)--(3+3-0.2, 7-0.2)--(3+0.2, 7-0.2)--(3+0.2, 6+0.2);
\fill[maroon] (3-1+0.2, 6+0.2)--(3-0.2, 6+0.2)--(3-0.2, 7-0.2)--(3-1+0.2, 7-0.2)--(3-1+0.2, 6+0.2);
\fill[red] (6+0.2, 2+4+0.2)--(3+5+1-0.2, 2+4+0.2)--(3+5+1-0.2, 2+4+1-0.2)--(3+3+0.2, 2+4+1-0.2)--(3+3+0.2, 2+4+0.2);
\fill[red] (9+0.2, 2+4+0.2)--(3+7+1-0.2, 2+4+0.2)--(3+7+1-0.2, 2+4+1-0.2)--(9+0.2, 2+4+1-0.2)--(9+0.2, 2+4+0.2);
\fill[red] (1+0.2, 2+1+4+0.2)--(2-0.2, 2+1+4+0.2)--(2-0.2, 2+1+4+1-0.2)--(1+0.2, 2+1+4+1-0.2)--(1+0.2, 2+1+4+0.2);
\fill[red] (-3+5+0.2, 2+1+4+0.2)--(-3+5+3-0.2, 2+1+4+0.2)--(-3+5+3-0.2, 2+1+4+1-0.2)--(-3+5+0.2, 2+1+4+1-0.2)--(-3+5+0.2, 2+1+4+0.2);
\fill[red] (5+0.2, 2+1+4+0.2)--(5+3-0.2, 2+1+4+0.2)--(5+3-0.2, 2+1+4+1-0.2)--(5+0.2, 2+1+4+1-0.2)--(5+0.2, 2+1+4+0.2);
\fill[red] (3+5+0.2, 2+1+4+0.2)--(3-1+5+3-0.2, 2+1+4+0.2)--(3-1+5+3-0.2, 2+1+4+1-0.2)--(3+5+0.2, 2+1+4+1-0.2)--(3+5+0.2, 2+1+4+0.2);
\fill[red] (6-1+5+0.2, 2+1+4+0.2)--(6-1+5+3-0.2, 2+1+4+0.2)--(6-1+5+3-0.2, 2+1+4+1-0.2)--(6-1+5+0.2, 2+1+4+1-0.2)--(6-1+5+0.2, 2+1+4+0.2);
\end{tikzpicture}
\end{center}

\end{exmp}

Examples where neither dark blue nor dark red tiles appear are constructed in \Cref{ex: d=1 e=j}.

\begin{proof}[Proof of Proposition~\ref{redblue}]
Let $\omega$ be a grading and $\mathfrak{p} \in P$ such that $\omega_{\mathfrak{p}}(S_\north) = p$ and $\omega_{\mathfrak{p}}(S_\east) = q$.
Suppose that $\text{Red}(w)\cap \text{Cyan}(w)\neq\varnothing$. 

A rectangle in $L_{\mathfrak{p}}$ whose vertices are lattice points and whose edges are vertical or horizontal will be called a \emph{lattice rectangle}. A lattice rectangle whose southwestern corner and northeastern corner lie on the maximal Dyck path will be called a  \emph{Dyck rectangle}.
For a Dyck rectangle $R$ in $L_{\mathfrak{p}}$ whose northwestern vertex is at $(x_W, y_N)$, let $b_R$ be the interior of the unit square whose vertices are at $(x_W-1/2\pm1/2, y_N-1/2\pm1/2)$.

Let $T\subseteq L_{\mathfrak{p}}$ be a Dyck rectangle such that  $\text{Red}(w)\cap \text{Cyan}(w)\cap T=b_T$. 
Let $u$ be the horizontal edge whose left endpoint is at the southwestern corner of $T$. Let $v$ be the vertical edge whose upper endpoint is at the northeastern corner of $T$. 
Then for any  edge $e$ along the subpath $\overrightarrow{uv}$, none of the following holds:
  \begin{equation*}
   \aligned
&  e\in \calP_\north \setminus\{v\} \quad \text{and} \quad |\overrightarrow{ue}_\north|= \omega\left( \overrightarrow{ue}_\east\right);\\
 &  e\in \calP_\east \setminus\{u\} \quad \text{and} \quad |\overrightarrow{ev}_\east|=\omega\left( \overrightarrow{ev}_\north\right).
  \endaligned
  \end{equation*}
So $\omega$ is not compatible, hence not tight.

Next suppose that $\text{Red}(w)\cap \text{Cyan}(w)=\varnothing$  but $\text{Red}(w)\cap \text{DarkBlue}(w)\neq \varnothing$.  
Let $T\subseteq L_{\mathfrak{p}}$ be a Dyck rectangle such that  $\text{Red}(w)\cap \text{DarkBlue}(w)\cap T=b_T$. 
Let $u$ be the horizontal edge whose left endpoint is at the southwestern corner of $T$. Let $v$ be the vertical edge whose upper endpoint is at the northeastern corner of $T$.  For any dark blue tile, there is a cyan tile that is to the right of the dark blue tile. Hence $\text{Red}(w)\cap \text{Cyan}(w)\neq\varnothing$, which is a contradiction.  This means that if $\text{Red}(w)\cap \text{Cyan}(w)=\varnothing$  then $\text{Red}(w)\cap \text{DarkBlue}(w)= \varnothing$, and by symmetry, $\text{DarkRed}(w)\cap \text{Cyan}(w)= \varnothing$.

Therefore, if $\omega$ is tight, then $\text{Red}(w)\cap \text{Cyan}(w)=\text{Red}(w)\cap \text{DarkBlue}(w)=\text{DarkRed}(w)\cap \text{Cyan}(w)=\varnothing$.

The converse is straightforward, because if $\text{Red}(w)\cap \text{Cyan}(w)=\text{Red}(w)\cap \text{DarkBlue}(w)=\text{DarkRed}(w)\cap \text{Cyan}(w)=\varnothing$ then for every pair of $u\in \calP_\east$ and $v\in \calP_\north$ with $\omega(u)\omega(v)>0$, it is easy to find an edge $e$ along the subpath $\overrightarrow{uv}$ so that at least one of \eqref{0407df:comp} holds.  The condition that there is some choice of $\mathfrak{p} \in P$ such that $\omega_{\mathfrak{p}}(S_\north) = p$ and $\omega_{\mathfrak{p}}(S_\east) = q$ is necessary by \Cref{rem: frame of tight}. 
\end{proof}

\begin{rem}
The red and cyan tiles are precisely the tiles that appear in the tiling construction described in \cite{BLM} and \cite{BLM2}.  For any horizontal edge on $\calP(d_1,d_2)$, the total height of the cyan tiles in the column above it is precisely the size of its local shadow, and analogously for the red tiles in the row containing each vertical edge.  The dark red and dark blue tiles (as well as their purple intersection) did not appear in any of the previous tiling constructions.
\end{rem}

\section{\texorpdfstring{Proof of \Cref{thm: main 1 intro}}{Proof of theorem}}\label{section: proof}

Recall the general scattering diagram $\frakD(P_1, P_2)$ at the beginning of \Cref{section: change of lattice} with
\[
    P_i = 1 + \sum_{k\geq 1} p_{i,k}x_i^k \in \Bbbk\llbracket x_i\rrbracket .
\]
By treating $p_{i,k}$ as indeterminates, we regard $\frakD(P_1, P_2)$ as a kind of \emph{universal scattering diagram} since any scattering diagram in the context of \Cref{section: change of lattice} can be specialized from $\frakD(P_1, P_2)$ by taking particular values of $p_{i,k}$ in $\Bbbk$; see \cite[Section 5.6]{BLM2} for an algebraic formalization. Now part (1) of \Cref{thm: main 1 intro} readily follows from \Cref{thm: universal coefficient} via specialization.

\begin{proof}[Proof of \Cref{thm: main 1 intro}(1)]
    The scattering diagram $\frakD^\circ_{(c,b)} = \frakD((1+x_1)^c, (1+x_2)^b)$ is obtained from specializing the universal scattering diagram at
    \[
        p_{1,k} = \binom{c}{k} \quad \text{and} \quad p_{2,k}= \binom{b}{k}.
    \]
    By the formula of $\fclu{d}{e}$ given in \Cref{thm: universal coefficient}, we see that
    \[
        \tvco_k = \tvco(kd,ke) = \sum_{\pt_1\vdash kd}\sum_{\pt_2\vdash ke} \lambda(\pt_1,\pt_2) \prod_{p\in \pt_1} \binom{c}{p}\prod_{q\in \pt_2} \binom{b}{q}.
    \]
    Clearly $\tvco_k$ is a polynomial in $b$ and $c$, and $bc$ divides $\tvco_k$.

    By \Cref{cor: length one partitions}, the leading term $\lambda(\pt_1, \pt_2)$ for $\pt_1 = (kd)$ and $\pt_2 = (ke)$ equals $k = \gcd(kd, ke)$.
\end{proof}

Now that the polynomiality of $\tvco_k$ is proven, the polynomiality of $\tau_k$ (\Cref{thm: main 1 intro}(2)) is obtained through the change-of-lattice transformation in \Cref{prop: change of lattice}.

\begin{proof}[Proof of \Cref{thm: main 1 intro}(2)]
    The claimed expression of $\tau_k$ in terms of $\tvco_k$ is a direct consequence of
    \[
        1 + \sum_{k\geq 1}\tau_kx_1^{kd}x_2^{ke} = \left(1 + \sum_{k\geq 1} \tvco_k x_1^{kd}x_2^{ke} \right)^{g/bc} \quad \text{(by \Cref{prop: change of lattice})}.
    \]
    Since every $\tvco_k$ is divided by $bc$, we have $\tau_k\in \mathbb{Q}[b,c,g]$. The top-degree term of $\tau_k$ in $g$ is given by 
    \[
        \left(\frac{\tvco_1}{bc}\right)^k g^k.
    \]
    As $\tvco_1$ is always non-zero by \Cref{thm: main 1 intro}(1), the degree of $\tau_k$ in $g$ is $k$.
\end{proof}

Now write $\tau_k$ as a polynomial in $g$:
\[
    \tau_k = \sum_{n=1}^k \tau_k(n) g^n, \quad \tau_k(n)\in \mathbb{Q}[b,c].
\]
Before showing \Cref{thm: main 1 intro}(3), we first prove \Cref{thm: main 1 intro}(4):
\[
    \tau_k(n) = \frac{1}{n!}\sum_{k_1+\cdots +k_n = k}\tau_{k_1}(1)\cdots \tau_{k_n}(1).
\]
It is another algebraic consequence directly from the change-of-lattice transformation (\Cref{prop: change of lattice}).

\begin{proof}[Proof of \Cref{thm: main 1 intro}(4)]
    For fixed $d$ and $e$, define the generating function
    \[
        F(x) \coloneqq 1 + \sum_{k\geq 1} \tvco_k x^k.
    \]
    By \Cref{prop: change of lattice}, we have
    \[
        1 + \sum_{k\geq 1} \tau_k x^k = F(x)^{g/bc} = \exp\left(g\frac{\ln F(x)}{bc}\right).
    \]
    Expand both sides as series of $g$:
    \[
        1 + \sum_{n\geq 1} g^n \sum_{k\geq 1} \tau_k(n) x^k = 1 + \sum_{n\geq 1} g^n \frac{1}{n!}\left(\frac{\ln F(x)}{bc}\right)^n.
    \]
    Comparing the $g^n$-coefficients between both sides, we have for any $n\geq 1$,
    \[
        \sum_{k\geq 1}\tau_k(n)x^k = \frac{1}{n!}\left(\sum_{k\geq 1}\tau_k(1)x^k\right)^n.
    \]
    Comparing the $x^k$-coefficients of both sides yields the desired equality.
\end{proof}

\begin{rem}\label{rem: rho from tau}
    Evaluating $g=bc$ recovers $\tvco_k$ from $\tau_k$ by
    \[
        \tvco_k = \sum_{n=1}^k (bc)^n\tau_k(n).
    \]
    In particular, in the coprime case we have $\tvco(d,e) = \frac{bc}{g}\tau(d,e)$.
\end{rem}

We proceed to prove part (3) of \Cref{thm: main 1 intro}. Write $(\ii, \jj) = (kd, ke)$ and thus $\tau_k = \tau(\ii, \jj)$. It is clear from \Cref{thm: main 1 intro}(2) that
\begin{equation}\label{eq: b c degree bound}
    \deg_b\tau(\ii, \jj) \leq ke-1=\jj-1 \quad \text{and} \quad \deg_c \tau(\ii,\jj) \leq kd-1=\ii-1. 
\end{equation}
We show these degree bound are attained using a tight grading interpretation. We look at ``limits'' of tight gradings, in the sense that at least one of $b$ or $c$ is taken to be arbitrarily large.

\begin{lem}\label{lem: b c degree tight gradings}
The function $\tau(\ii,\jj) \in \mathbb{Q}[b,c,g]$ has degree $\jj - 1$ in $b$ and $\ii - 1$ in $c$.  Moreover, the coefficient of $gb^{\jj-1}c^{\ii-1}$ in $\tau(\ii,\jj)$ is positive.  
\end{lem}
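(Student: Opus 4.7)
The plan is to combine the upper degree bounds from \eqref{eq: b c degree bound} with positivity of a single leading coefficient, accessed through the quiver-moduli closed-form expression derived in Section~3. Writing $(i,j)=(kd,ke)$ with $\gcd(d,e)=1$ and setting $\tau_k(1)\coloneqq [g^1]\tau_k\in\mathbb{Q}[b,c]$, the coefficient of $g\,b^{j-1}c^{i-1}$ in $\tau(i,j)$ equals the leading $b^{ke-1}c^{kd-1}$ coefficient of $\tau_k(1)$. Since \eqref{eq: b c degree bound} already provides $\deg_b\tau(i,j)\leq j-1$ and $\deg_c\tau(i,j)\leq i-1$, strict positivity of this single coefficient simultaneously forces both degree equalities and settles the lemma.

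To extract this coefficient I would start from the explicit formula derived in Section~3 from \Cref{thm20250702n1}:
\[
    \tau_k(1) \;=\; \sum_{l_*}\prod_{j\geq 1}\frac{j\,P_{(d,e),j}(b,c)}{l_j!}\prod_{m=1}^{l_j-1}\bigl(k\,E'\,P_{(d,e),j}(b,c)+m\bigr),
\]
where $E'=debc-d^2b-e^2c$, $P_{(d,e),j}=\chi_{(d,e)}(j)/(bc)$, and the sum is over tuples $(l_j)_{j\geq 1}$ with $\sum_j jl_j = k$. A direct bidegree count does the rest: each $P_{(d,e),j}$ has bidegree at most $(je-1,jd-1)$ with leading coefficient $\gamma_j$ at $b^{je-1}c^{jd-1}$, while $E'$ has leading term $debc$. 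Thus each $j$-block with $l_j=m$ contributes bidegree $(jem-1,jdm-1)$, and an $l_*$ supported on $N$ distinct indices produces bidegree $(ke-N,kd-N)$. Consequently only $l_*$ supported on a single $j=j_0$ (with $l_{j_0}=m$ and $j_0m=k$) reaches bidegree $(ke-1,kd-1)$, yielding
\[
    [b^{ke-1}c^{kd-1}]\tau_k(1) \;=\; \sum_{jm=k}\frac{j\,(kde)^{m-1}\,\gamma_j^m}{m!}.
\]

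The remaining task is to show this sum is strictly positive. For $j=1$, the value $\gamma_1$ agrees with the positive leading coefficient $\alpha_1$ of $\tvco_1$ from \Cref{thm: main 1 intro}(1), so the $(j,m)=(1,k)$ term contributes $\frac{(kde)^{k-1}\gamma_1^k}{k!}>0$. For $j\geq 2$, each $\gamma_j$ equals, up to the positive factor $1/((jd)!(je)!)$, the Euler characteristic $\chi\bigl(M^{\mathrm{st}}((1^{jd}),(1^{je}))\bigr)$ of the moduli of stable thin-dimension-vector representations of the complete bipartite quiver $K(jd,je)$. The main obstacle is establishing nonnegativity $\gamma_j \geq 0$ for all $j\geq 2$; this should follow from the standard positivity results on Euler characteristics of stable quiver moduli for the complete bipartite (generalized Kronecker) type used in Section~3 and recorded in \cite{RW}. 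Once this is in hand, every summand in the expression for $[b^{ke-1}c^{kd-1}]\tau_k(1)$ is nonnegative and the $(1,k)$ term is strictly positive, so the coefficient is positive, completing the proof.
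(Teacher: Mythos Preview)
Your approach is genuinely different from the paper's. The paper argues combinatorially: it interprets $\tau(i,j)$ as a count of tight gradings (\Cref{rem: tight grading count}), fixes a frame, and explicitly exhibits at least a positive constant times $gb^{j-1}c^{i-1}$ tight gradings by choosing the $i-1$ free vertical edges among the top $c/2$ vertical edges and the $j-1$ free horizontal edges among the leftmost $(1-\tfrac{i}{2j})b$ horizontal edges, using the footprint picture of \Cref{picto_tight} to verify compatibility. You instead route everything through the closed quiver--moduli formula of \Cref{section: wall function quiver moduli}, which imports results of \cite{RW} and avoids tight gradings entirely. Your bidegree bookkeeping is correct and the reduction to the single-support terms is clean; each approach buys something different --- the paper's is self-contained within the tight-grading framework and yields an explicit asymptotic lower bound, while yours compresses the problem to one positivity statement about Euler characteristics.

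That said, the gap you flag is genuine and is not closed by the reference you give. You need $\gamma_j\geq 0$ for all $j\mid k$, where $\gamma_j$ is (up to a positive factor) $\chi\bigl(M^{\mathrm{st}}((1^{jd}),(1^{je}))\bigr)$. The results recorded in \cite{RW} do not include a blanket nonnegativity statement for Euler characteristics of stable bipartite quiver moduli, and in general Euler characteristics of smooth projective varieties can be negative. What would actually close the gap is the observation specific to \emph{thin} dimension vectors: for all entries equal to $1$ on the complete bipartite quiver, the gauge group is a torus, so the GIT quotient is a projective toric variety and its Euler characteristic equals the number of torus fixed points, hence is $\geq 0$. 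Without making this (or an equivalent) argument explicit, the positivity of $\sum_{jm=k}\tfrac{j(kde)^{m-1}\gamma_j^{m}}{m!}$ is not established, since a divisor $j$ of $k$ with $k/j$ odd and $\gamma_j<0$ would contribute negatively and could in principle overwhelm the $(j,m)=(1,k)$ term.
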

\begin{proof}
Fix $\ii$, $\jj$, and $g$.  We can hold these constant by choosing a suitable sequence of pairs $(b,c)$ tending to infinity in both entries.  We view $\tau(\ii,\jj)$ as the number of tight gradings in $\CGbc(d_1,d_2,\ii,\jj)$ for an appropriate choice of $d_1,d_2$, following \Cref{rem: tight grading count}.  (Recall that $\CGbc(d_1,d_2,\ii,\jj)$ is the subset of $\CG(d_1,d_2,bi,cj)$ such that 
$\omega$ takes at most one distinct nonzero value on the horizontal edges and at most one distinct nonzero value on the vertical edges.)
We then fix a frame $L_{\mathfrak{p}}$ in $\calP(d_1,d_2)$, as defined in \Cref{subsec: frames}.  There is a constant number of choices of frame, namely $g \cdot \gcd(i,j) = \gcd(b\ii,c\jj)$, according to \Cref{lem: tight grading bounding labels}. 
Moreover, \Cref{lem: tight grading bounding labels} guarantees that the leftmost horizontal edge in $S_\east$ and the topmost vertical edge in $S_\north$ are fixed by the choice of frame. 
Since the $\jj-1$ additional horizontal edges of $S_\east$ must be chosen from the remaining $\ii b-1$ edges in the horizontal shadow, the degree in $b$ is at most $\jj-1$.  Similarly the degree in $c$ is at most $\ii-1$.

Suppose we choose the $\ii-1$ other vertical edges in $S_\north$ arbitrarily among the $\frac{c}{2}$ topmost vertical edges within the rectangular support.  There are $\binom{c/2}{\ii-1}$ such choices.  The red and dark red tiles in the second-to-topmost row have total length $(\ii-1)b$ and is horizontal distance at most $\frac{\ii b}{2\jj} \pm o(b)$ from the topmost vertical edge.  Thus, there are $b - \frac{\ii b}{2\jj} \pm o(b)$ columns that do not have any red or dark red tiles except in the topmost row.  Hence the remaining $\jj-1$ edges of $S_\east$ may be picked arbitrarily among the $\left(1-\frac{\ii}{2\jj}\right)b$ leftmost unchosen horizontal edges in the support.   Note that there are no red or dark red tiles in these leftmost $\left(1-\frac{\ii}{2\jj}\right)b$ columns except for in the topmost row, so there are no overlaps between tiles and hence no purple tiles arise.  There are $\binom{\left(1-\frac{\ii}{2\jj}\right)b}{\jj-1}$ choices for $S_{
\east}$ within these columns, as well as $g \cdot \gcd(i,j) \geq g$ choices for the frame. Thus there are at least a positive constant times $gb^{j-1}c^{i-1}$ tight gradings obtained via this construction.
\end{proof}

\begin{rem}
The proof of \Cref{lem: b c degree tight gradings} yields an explicit lower bound on the coefficient of $gb^{\jj-1}c^{\ii-1}$ in $\tau(\ii,\jj)$.  Instead of choosing $S_\north$ among the top $\frac{c}{2}$ vertical edges, we can introduce a parameter $\lambda \in (0,1)$ and choose $S_\north$ from among the top $\lambda c$ vertical edges.  Then optimizing over all choices of $\lambda$ gives a lower bound of 
$$\frac{(\jj-1)^{\jj-1}\left(\jj(\ii-1)\right)^{\ii-1}}{\ii^{\ii-1}(\ii+\jj-2)^{\ii+\jj-2}} \cdot \frac{\gcd(\ii,\jj)}{(\ii-1)!(\jj-1)!}$$
on the coefficient of $gb^{\jj-1}c^{\ii-1}$.
 
\end{rem}

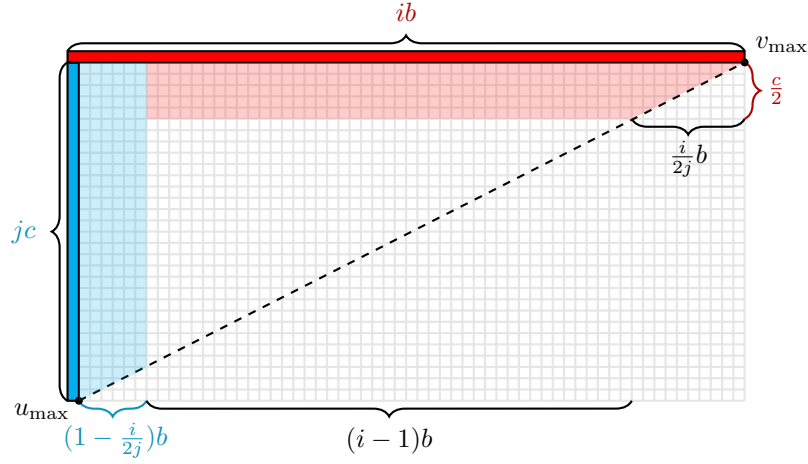
\begin{figure}[ht]
\begin{tikzpicture}[scale=1.5, thick]
  \def\rectWidth{0.1}
  \def\cjHeight{3}
  \def\biLength{6}

  \coordinate (A) at (0,0); 
  \coordinate (B) at ($(A)+(0,\cjHeight)$); 
  \coordinate (C) at ($(B)+(\biLength,0)$); 
  \coordinate (D) at ($(A)+(\biLength,0)$); 

  \draw[step=0.1,color=gray!20!] (A) grid (C);
  \fill[cyan] (A) rectangle ++(\rectWidth,\cjHeight);
  \draw (A) rectangle ++(\rectWidth,\cjHeight);

  \fill[red] (B) rectangle ++(\biLength,\rectWidth);
  \draw (B) rectangle ++(\biLength,\rectWidth);

  \draw[dashed] (A)+(\rectWidth,0) -- (C);

  \draw[decorate,decoration={brace,amplitude=6pt}]
    ($(A)$) -- ($(B)$)
    node[midway,xshift=-8pt,left,cyan!70!black] {\small $\jj c$};

  \draw[decorate,decoration={brace,amplitude=6pt}]
    ($(B)+(0,\rectWidth)$) -- ($(C)+(0,\rectWidth)$)
    node[midway,yshift=15pt,red!70!black] {\small $\ii b$};

     \draw[decorate,decoration={brace,mirror,amplitude=6pt},cyan!70!black]
    ($(A)+(\rectWidth,0)$) -- ($(.7,0)$)
    node[midway,yshift=-15pt,xshift=1pt,cyan!70!black] {\small $(1-\frac{\ii}{2\jj})b$};

    \draw[decorate,decoration={brace,amplitude=6pt},red!70!black]
    ($(C)$) -- ($(C)-(0,.5)$)
    node[midway,xshift=12pt,red!70!black] {\small $\frac{c}{2}$};

    \draw[decorate,decoration={brace,mirror,amplitude=6pt}]
    ($(C)-(1,.5)$) -- ($(C)-(0,.5)$)
    node[midway,yshift=-15pt] {\small $\frac{\ii}{2\jj}b$};

    \draw[decorate,decoration={brace,mirror,amplitude=6pt}]
    ($(.7,0)$) --($(C)-(1,\cjHeight)$) 
    node[midway,yshift=-15pt] {\small $(\ii-1)b$};

  \fill (A)+(\rectWidth,0) circle (1pt) node[yshift=-4pt,xshift=-14pt] {\small $u_{\max}$};
  \fill (C) circle (1pt) node[above right] {\small $v_{\max}$};

  \fill[red,opacity=0.2] (C)--($(C)-(1,.5)$)--(.7,2.5)--(.7,3)--(C);

  \fill[cyan,opacity=0.2] (\rectWidth,0)--(A)+(.7,.29)--(.7,3)--(\rectWidth,3)--(\rectWidth,0);

\end{tikzpicture}
\label{fig: limit path g factor}
\caption{An illustration of the proof of \autoref{lem: b c degree tight gradings}.  Following the construction, the remaining red and dark red tiles are contained in the red shaded region, while the remaining cyan and dark blue tiles are contained in the cyan shaded region.}
\end{figure}

\begin{proof}[Proof of \Cref{thm: main 1 intro}(3)]
    Combining \Cref{lem: b c degree tight gradings} and \eqref{eq: b c degree bound}, we can confirm that 
    $$\deg_b\tau(kd,ke;1)= ke-1 \text{ and }\deg_c\tau(kd,ke;1)= kd-1\,.$$
    Moreover, \Cref{lem: b c degree tight gradings} asserts that the coefficient of $b^{ke-1}c^{kd-1}$ in $\tau(kd,ke;1)$ is positive.  Then focusing on the top-degree term of each factor $\tau(qd,qe;1)$ in part (4) of \Cref{thm: main 1 intro}, we can see that the top-degree term of $\tau(kd,ke;n)$ is a positive constant times $b^{ke-n}c^{kd-n}$. 
\end{proof}

We end this section by the following example. The implications of \Cref{C16,C17} are discussed at the end of the example.

\begin{exmp}\label{ex: d=1 e=j}
    Let $(d, e) = (1, \jj)$. By \Cref{rem: rho from tau}, we have
    \[
        \tvco(2, 2\jj) = (bc)^2 \tau(2, 2\jj; 2) + bc\cdot \tau(2, 2\jj;1),
    \]
    where $\tau(2, 2\jj; 2) = \frac{1}{2}\tau(1,\jj;1)^2$ by \Cref{thm: main 1 intro}(4) and $\tau(1, \jj; 1) = \tvco(1, \jj)/bc$ (the coprime case of \Cref{rem: rho from tau}). Thus we can express
    \begin{equation}\label{eq: tau 2 2j 1}
        \tau(2, 2\jj;1) = \frac{1}{bc}\tvco(2,2\jj) - \frac{1}{2bc} \tvco(1,\jj)^2.
    \end{equation}
    Next we determine $\tvco(1,\jj)$ and $\tvco(2, 2\jj)$ explicitly:
    \begin{align}
        \tvco(1,\jj) &= c\binom{b}{\jj}, \label{eq: rho 1 j}\\
        \tvco(2,2\jj) &= \binom{b}{\jj}^2\binom{c}{2} + \sum_{n=1}^\jj 2\binom{b}{\jj+n}\binom{b}{\jj-n}\binom{c}{2} + \sum_{n=1}^{\jj} (2n-1)\binom{b}{\jj+n}\binom{b}{\jj-n}c^2. \label{eq: rho 2 2j}
    \end{align}

    Recall from \Cref{thm: main 1 intro}(1) that
    \[
        \tvco(kd, ke) = \sum_{\pt_1\vdash kd}\sum_{\pt_2\vdash ke} \lambda(\pt_1, \pt_2) \prod_{p\in \pt_1}\binom{c}{p}\prod_{q\in \pt_2}\binom{b}{q}.
    \]
    For $\tvco(1,\jj)$, the result follows from that there is a unique tight grading associated with the partitions $\pt_1=(1)$ and $\pt_2=(\jj)$, giving $\lambda((1), (\jj))=1$ (\Cref{cor: length one partitions}) and that there is none for any other pairs of partitions.

    For $\tvco(2, 2\jj)$, we list all non-zero $\lambda(\pt_1, \pt_2)$ with $\pt_1\vdash 2$ and $\pt_2\vdash 2\jj$:
    \begin{enumerate}
        \item $\lambda((2), (\jj,\jj)) = 1$;
        \item $\lambda((2), (\jj+n,\jj-n)) = 2$ for $n = 1, \dots, \jj$;
        \item $\lambda((1,1), (\jj+n,\jj-n)) = 2n-1$ for $n=1, \dots, \jj$.
    \end{enumerate}
    These counts are obtained from finding all corresponding tight gradings. See \Cref{fig: j=5 p1=1+1} and \Cref{fig: j=5 p1=2} for the case where $j=5$. Then the formula \eqref{eq: rho 2 2j} follows.

    \begin{figure}[ht]
    \centering
    \includegraphics[width=\linewidth]{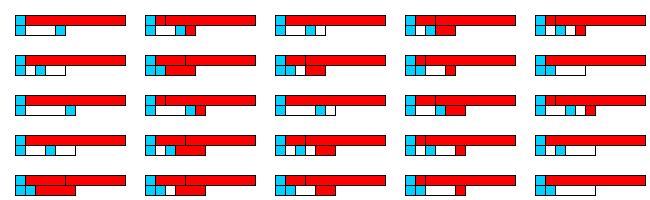}
    \caption{Tight gradings in the case $j=5$ with $\mathbf{P}_1 = (1,1)$.}
    \label{fig: j=5 p1=1+1}
    \end{figure}
    
    \begin{figure}[ht]
        \centering
        \includegraphics[width=\linewidth]{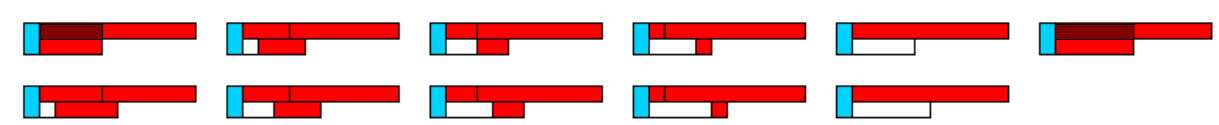}
        \caption{Tight gradings in the case $j=5$ with $\mathbf{P}_1 = (2)$.}
        \label{fig: j=5 p1=2}
    \end{figure}

    By symmetry, we also obtain formulas for $\tvco(\ii, 1)$ and $\tvco(2\ii, 2)$ from \eqref{eq: rho 1 j} and \eqref{eq: rho 2 2j} by replacing $\jj$ with $\ii$ and swapping $b$ and $c$.

    Finally we note by \Cref{thm: main 1 intro}(4),
    \[
        \tau(k, \jj k; k-1) = \frac{\tau(1,\jj;1)^{k-2}\tau(2,2\jj;1)}{(k-2)!}.
    \]
    However by the formulas \eqref{eq: rho 1 j}, \eqref{eq: rho 2 2j} and \eqref{eq: tau 2 2j 1}, clearly $\tau(1,\jj;1)$ divides $\tau(2,2\jj;1)$. This proves \Cref{C16} as well as \Cref{C17} by symmetry.
\end{exmp}

\begin{rem}
    Notice that there are no dark blue or dark red tiles in \Cref{fig: j=5 p1=1+1}.
    In \Cref{fig: j=5 p1=1+1}, the tight gradings on the first two rows share the same frame, while those on the last three rows share a different frame.
\end{rem}

\section{Weyl group symmetry}\label{section: weyl symmetry}
In this section, using tight gradings, we give a bijective proof of the Weyl group symmetry for cluster scattering diagrams, namely
\begin{equation}\label{eq: weyl symmetry}
    \tau^{b,c}(\ii,\jj) = \tau^{b,c}(\ii,b\ii-\jj) \text{ and } \tau^{b,c}(\ii,\jj) = \tau^{b,c}(c\jj-\ii,\jj)\,.
\end{equation}
This resolves \Cref{problem_of_ERS} posed by Elgin, Reading, and Stella \cite{ERS}.

\subsection{Mutation and Retraction}
One of the main tools we use is the Lee--Li--Zelevinsky mutation map \cite{LLZ}.  Combining this mutation map with a new ``retraction'' map yields a bijection between collections of tight gradings that exhibit the Weyl group symmetries.

We start by recalling some of the notions from \cite{LLZ}.  

The \emph{remote shadow of $S_\north$}, denoted by $\rsh(S_\north)$, is the subset of $\sh(S_\north)$ obtained by removing all horizontal edges of height $\fpj-1$ whenever $v_\fpj \in S_\north$. We say that $\edge \in \calP$ is \emph{directly shadowed} by $\edge' \in \calP$ if $\edge \in \sh(\edge')$ and $\sh(\edge')$ is minimal with respect to this property.  Note that this is well defined because any pair of shadows are either disjoint or nested.  For $0 \leq h < d_2$ and $0 < \fpj \leq d_2$, let $\rsh(S_\north)_{h;\fpj}$ denote the set of edges in $\rsh(S_\north)$ of height $h$ that are directly shadowed by $v_\fpj$.  For $0 < \fpi \leq |\rsh(S_\north)_{h;\fpj}|$ let $\rsh(S_\north)_{h;\fpj;\fpi}$ denote the $\fpi^{\text{th}}$ element of $\rsh(S_\north)_{h;\fpj}$, ordered left-to-right.

Since a grading $\omega \in \CGbc(d_1,d_2,i,j)$ is completely determined by the edge sets $S_\east$ and $S_\north$ where $\omega$ is nonzero, from now on we identify such $\omega$ with the pair $(S_\east,S_\north)$.  This is precisely the setting of \emph{compatible pairs} in \cite{LLZ}; see also the discussion below \Cref{def: compatible grading}. 

\begin{defn}[{\cite[Lemma 3.5]{LLZ}}]\label{defn: compatible pair mutation}
    The \emph{mutation map}
    \[
        \theta: \CGbc(d_1,d_2,i,j) \to \CGbc(bd_2 - d_1, d_2)
    \]
    is defined for compatible gradings follows.
    Fix $(S_\east,S_\north) \in \CGbc(d_1,d_2,i,j)$.  Then we have $\theta(S_\east,S_\north) = (S_\east',S_\north')$, where 
    $$S_\north' = \{v_\fpj' \, : \, v_{d_2+1-\fpj}\in \mathcal{P}_2\setminus S_\north \}, \text{ and }$$
    $$S_\east' = \{\rsh(S_\north')_{d_2 - \fpj; d_2 - h;\fpi} : \rsh(S_\north)_{h;\fpj;\fpi} \in S_\east\}.$$
\end{defn}

The fact that the image of $\theta$ consists of compatible gradings follows from \cite[Lemmas 3.5 and 3.12]{LLZ}.  Note that $\theta$ forgets the information of the edges in $\calP_1 \cut \sh(S_\north)$.

\begin{rem}\label{rem: mutation map comparison}
Our definition of the Lee--Li--Zelevinsky map $\theta$ differs from their original formulation in \cite{LLZ}.  We apply $\theta$ to compatible gradings, rather than sets of vertical edges.  It is straightforward to show that the two definitions are equivalent up to specifying a choice of horizontal edges.  Note that we use the notation $d_1$, $d_2$ in place of their parameters $a_1$, $a_2$.
\end{rem}

In the definition of tight gradings, we have the condition that $b\ii d_2 - c\jj d_1 = \pm \gcd(b\ii, c\jj)$.  The choice of sign determines the shadow structure of the tight grading, namely that $S_\east \subseteq \sh(S_\north)$ when $b\ii d_2 - c\jj d_1  = \gcd(b\ii, c\jj)$ and $S_\north \subseteq \sh(S_\east)$ otherwise.  We thus consider the following sets of \emph{shadowed gradings} 

\begin{align*}
    \CGbc^\top(d_1, d_2, \ii, \jj) &\coloneqq \{\omega \in \CGbc(d_1, d_2, \ii, \jj) : S_\east \subseteq \sh(S_\north)\}\,, \text{ and }\\
    \CGbc^\vdash(d_1, d_2, \ii, \jj) &\coloneqq \{\omega \in \CGbc(d_1, d_2, \ii, \jj) : S_\north \subseteq \sh(S_\east)\}\,.
\end{align*}

\begin{prop}\label{prop: compatibility mutation}
The mutation map $\theta$ from \autoref{defn: compatible pair mutation} is injective when restricted to the set $\CGbc^\top(d_1,d_2,\ii,\jj)$. 
\end{prop}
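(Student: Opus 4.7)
The plan is to exhibit a left inverse of $\theta$ on $\CGbc^\top(d_1,d_2,\ii,\jj)$, which immediately yields injectivity. Given $\theta(S_\east, S_\north) = (S_\east', S_\north')$, I will recover $(S_\east, S_\north)$ in stages. The vertical part is easy: the map $S_\north \mapsto S_\north'$ is the composition of complementation in $\calP_\north$ with the reflection $v_j \leftrightarrow v_{d_2+1-j}$, which is an involution, so $S_\north$ is immediately determined by $S_\north'$.

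The decisive step is to show that, under the top-shadowed hypothesis $S_\east \subseteq \sh(S_\north)$, we have the strengthened containment $S_\east \subseteq \rsh(S_\north)$. Argue by contradiction: suppose some $u \in S_\east$ lies at a forbidden height $h = j - 1$ with $v_j \in S_\north$, and let $x^*$ be the $x$-coordinate of the base of $v_j$. Since both $u$ and the base of $v_j$ sit at height $h$, the subpath $\overrightarrow{u v_j}$ stays at height $h$ until the single vertical step $v_j$, so it contains no interior vertical edge. The only applicable clause of the $\CGbc$-compatibility condition then demands a horizontal edge $e \neq u$ with $|\overrightarrow{e v_j}_\east| = \omega(v_j) = b$, which forces $x_u \leq x^* - b - 1$. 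However, a direct shadow analysis shows that every horizontal edge at height $h$ appearing in any $\sh(v_k)$ with $v_k \in S_\north$ lies in the columns $[x^* - b,\, x^* - 1]$: the shadow of $v_j$ itself reaches back at most $b$ edges at height $h$ before either terminating or descending, and the shadows of higher $v_k \in S_\north$ pick up exactly those same $b$ edges at height $h$ as they cross $v_j$. These two column constraints are incompatible, yielding a contradiction.

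With the containment $S_\east \subseteq \rsh(S_\north)$ in hand, each $u \in S_\east$ has a unique index triple $(h, j, r)$ with $u = \rsh(S_\north)_{h;j;r}$. The definition of $\theta$ sends this triple to $(d_2 - j,\, d_2 - h,\, r)$, an involution on the relevant index set (one verifies that $v_j \in S_\north$ together with $h$ being allowed for $\rsh(S_\north)$ matches precisely the analogous conditions for the image indices after the vertical mutation). Inverting this involution recovers $S_\east$ from $(S_\east', S_\north)$, completing the construction of the left inverse. The main obstacle is the containment step in the middle paragraph; it depends on both the top-shadowed hypothesis and the $\CGbc$-specific rigidity that every vertical in $S_\north$ carries weight exactly $b$, which makes the shadow-reach and compatibility constraints tight and mutually exclusive precisely in this configuration.
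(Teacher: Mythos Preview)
Your overall architecture matches the paper's: recover $S_\north$ from $S_\north'$ via the obvious involution, establish the key containment $S_\east \subseteq \rsh(S_\north)$, and then invert the index-triple involution $(h,j,r) \mapsto (d_2-j,\,d_2-h,\,r)$ to recover $S_\east$. The paper simply asserts the containment $S_\east \subseteq \rsh(S_\north)$ (implicitly deferring to \cite{LLZ}) and concludes in one line; you attempt to prove it from scratch, which is more ambitious.

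The gap is in your shadow-confinement claim. You assert that every horizontal edge at height $h=j-1$ lying in some $\sh(v_k)$ with $v_k \in S_\north$ must have column in $[x^*-b,\,x^*-1]$, justified by saying that shadows of higher $v_k$ ``pick up exactly those same $b$ edges at height $h$ as they cross $v_j$''. This is not correct as stated: when $\sh(v_k)$ descends through $v_j$, it may already carry a positive deficit $d = b\cdot|\{v_{j'} \in S_\north : j < j' \leq k\}| - |\{\text{horizontal edges strictly between $v_j$ and $v_k$}\}|$. After crossing $v_j$ the deficit becomes $d+b$, so $\sh(v_k)$ consumes $d+b$ edges at height $j-1$ (if available), reaching column $x^* - (d+b)$ rather than $x^* - b$. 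Thus your step (b) does not stand on its own.

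Part (a) of your contradiction is fine: compatibility of $(u,v_j)$ genuinely forces $x_u \leq x^* - b - 1$. To close the argument you need a different route to rule out $u \in \sh(v_k)$ for $k>j$: either invoke compatibility of the pair $(u,v_k)$ directly (which furnishes a balance point and hence bounds the reach of $\sh(v_k)$ from the left), or use the run-length structure of the maximal Dyck path to control how far any shadow can overshoot at height $j-1$. Neither of these ingredients appears in your sketch.
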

\begin{proof}
    When $(S_\east,S_\north) \in \CGbc^\top(d_1,d_2,\ii, \jj)$, then we have $S_\east \subseteq \rsh(S_\north)$.  Thus all the information about $S_\east$ is preserved under mutation.
\end{proof}
    
In addition to mutation, we require a retraction map for compatible gradings that forgets some of the vertical edges, namely those outside the shadow of the horizontal edges.  This allows us to obtain shadowed gradings from compatible gradings, which will then enable us to get a tight grading in $\CGbc^\vdash$ from one in $\CGbc^\top$.   

\begin{defn}
The retraction map 
$$\varrho: \CGbc(d_1,d_2,\ii, \jj) \to \coprod_{0 \leq k \leq \ii} \CGbc^{\vdash}(d_1,d_2,k,\jj)$$ maps a compatible grading $(S_\east,S_\north)$ to $(S_\east,S_\north \cap \sh(S_\east))$.
\end{defn}

\subsection{Preservation on Tight Gradings}

We now show that the Weyl group symmetry $\tau^{b,c}(\ii,\jj) = \tau^{b,c}(c\jj-\ii,\jj)$ is exhibited by the map  $\varrho \circ \theta$, i.e., the composition of mutation and retraction, on the domain $\CGbc^\top(d_1,d_2,\ii, \jj)$.  The other symmetry can be obtained by an analogous map, swapping the roles of vertical and horizontal edges.

\begin{exmp}
Let $b = 3$, $c = 2$, $(d_1,d_2,\ii,\jj)=(7,5,2,2)$. Then $\varrho\circ \theta$ acts on a particular tight grading as shown in \Cref{fig: mutation example}.

\begin{figure}[ht]
\begin{tikzpicture}[scale=.4]
        \draw[step=1,color=gray] (0,0) grid (7,5);
        \draw (0,0)--(7,5);
        \draw[line width=1.5,color=black] (0,0)--(2,0)--(2,1)--(3,1)--(3,2)--(5,2)--(5,3)--(6,3)--(6,4)--(7,4)--(7,5);
        \fill[cyan] (1.2,0.2)--(1.2,1.8)--(1.8,1.8)--(1.8,0.2)--(1.2,0.2);
        \fill[cyan] (2+0.2,1.2)--(2+0.2,2.8)--(2+0.8,2.8)--(2+0.8,1.2)--(2+0.2,1.2);
        \fill[cyan] (1+0.2,2+0.2)--(1+0.2,3+0.8)--(1+0.8,3+0.8)--(1+0.8,2+0.2)--(1+0.2,2+0.2);
        \fill[red] (1.2,4+0.2)--(1.2,4+0.8)--(3.8,4+0.8)--(3.8,4+0.2)--(1.2,4+0.2);
        \fill[red] (3.2,3+0.2)--(3.2,3+0.8)--(5.8,3+0.8)--(5.8,3+0.2)--(3.2,3+0.2);
        \fill[red] (3+1.2,4+0.2)--(3+1.2,4+0.8)--(3+3.8,4+0.8)--(3+3.8,4+0.2)--(3+1.2,4+0.2);
        \draw[->] (8,2.5)--(10.5,2.5)
        node[midway,yshift=7pt] {$\theta$};
    \begin{scope}[shift={(11.5,0)}]
        \draw[step=1,color=gray] (0,0) grid (8,5);
        \draw (0,0)--(8,5);
        \draw[line width=1.5,color=black] (0,0)--(2,0)--(2,1)--(4,1)--(4,2)--(5,2)--(5,3)--(7,3)--(7,4)--(8,4)--(8,5);
                \fill[red] (1.2+1,4+0.2)--(1.2+1,4+0.8)--(3.8+1,4+0.8)--(3.8+1,4+0.2)--(1.2+1,4+0.2);
        \fill[red] (3.2+1,3+0.2)--(3.2+1,3+0.8)--(5.8+1,3+0.8)--(5.8+1,3+0.2)--(3.2+1,3+0.2);
        \fill[red] (3.2+1-3,3+0.2)--(3.2+1-3,3+0.8)--(5.8+1-3,3+0.8)--(5.8+1-3,3+0.2)--(3.2+1-3,3+0.2);
        \fill[red] (3+1.2+1,4+0.2)--(3+1.2+1,4+0.8)--(3+3.8+1,4+0.8)--(3+3.8+1,4+0.2)--(3+1.2+1,4+0.2);
         \fill[red] (1.2+1,2+0.2)--(1.2+1,2+0.8)--(3.8+1,2+0.8)--(3.8+1,2+0.2)--(1.2+1,2+0.2);
         \fill[red] (1.2+1-3,4+0.2)--(1.2+1-3,4+0.8)--(3.8+1-3,4+0.8)--(3.8+1-3,4+0.2)--(1.2+1-3,4+0.2);
        \fill[cyan] (1.2,0.2)--(1.2,1.8)--(1.8,1.8)--(1.8,0.2)--(1.2,0.2);
        \fill[cyan] (0.2,0.2)--(0.2,1.8)--(0.8,1.8)--(0.8,0.2)--(0.2,0.2);
        \fill[cyan] (0.2,2+0.2)--(0.2,2+1.8)--(0.8,2+1.8)--(0.8,2+0.2)--(0.2,2+0.2);
        \draw[->] (9,2.5)--(11.5,2.5)
        node[midway,yshift=7pt] {$\varrho$};
    \end{scope}
    \begin{scope}[shift={(24,0)}]
        \draw[step=1,color=gray] (0,0) grid (8,5);
        \draw (0,0)--(8,5);
        \draw[line width=1.5,color=black] (0,0)--(2,0)--(2,1)--(4,1)--(4,2)--(5,2)--(5,3)--(7,3)--(7,4)--(8,4)--(8,5);
        \fill[red] (3.2+1-3,3+0.2)--(3.2+1-3,3+0.8)--(5.8+1-3,3+0.8)--(5.8+1-3,3+0.2)--(3.2+1-3,3+0.2);
        \fill[red] (3.2+1,3+0.2)--(3.2+1,3+0.8)--(5.8+1,3+0.8)--(5.8+1,3+0.2)--(3.2+1,3+0.2);
         \fill[red] (1.2+1,2+0.2)--(1.2+1,2+0.8)--(3.8+1,2+0.8)--(3.8+1,2+0.2)--(1.2+1,2+0.2);
        \fill[cyan] (1.2,0.2)--(1.2,1.8)--(1.8,1.8)--(1.8,0.2)--(1.2,0.2);
        \fill[cyan] (0.2,0.2)--(0.2,1.8)--(0.8,1.8)--(0.8,0.2)--(0.2,0.2);
        \fill[cyan] (0.2,2+0.2)--(0.2,2+1.8)--(0.8,2+1.8)--(0.8,2+0.2)--(0.2,2+0.2);
    \end{scope}
    \end{tikzpicture} 
    \caption{The tight grading on the left is mapped to the compatible grading in the center via the mutation map $\theta$, which is in turn mapped to the tight grading on the right by the retraction map $\varrho$.} 
    \label{fig: mutation example}

\end{figure}
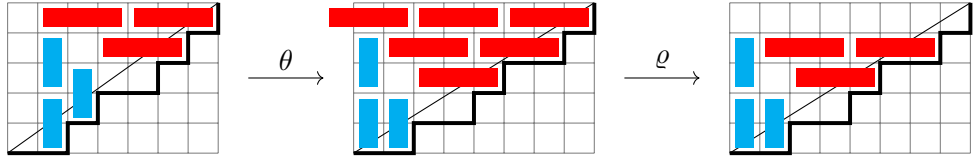
\end{exmp}

Let $\widetilde \omega$ be the grading obtained by applying the mutation map $\theta$ post-composed by the retraction $\varrho$ to $\omega \in \CGbc^\top(d_1,d_2,\ii, \jj)$.  Let $\widetilde S_\east$ and $\widetilde S_\north$ be the horizontal and vertical edges of nonzero weight in $\widetilde \omega$, respectively.  

\begin{lem}\label{lem: mutation retraction compatible}
We have $\widetilde \omega \in \CGbc^\vdash(b d_2 - d_1, d_2, c\jj - \ii, \jj)$.
\end{lem}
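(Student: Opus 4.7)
The plan is to verify that $\widetilde\omega = \varrho(\theta(\omega))$ satisfies all the defining conditions for membership in $\CGbc^\vdash(bd_2 - d_1, d_2, c\jj - \ii, \jj)$: it is a compatible grading on $\calP(bd_2 - d_1, d_2)$ with $|\widetilde S_\east| = \jj$ east edges of weight $c$, $|\widetilde S_\north| = c\jj - \ii$ north edges of weight $b$, and $\widetilde S_\north \subseteq \sh(\widetilde S_\east)$. I would handle the effects of $\theta$ and $\varrho$ in sequence.

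By the extension of Lee--Li--Zelevinsky's mutation theorem to the generalized setting \cite{LLZ, Rupgengreed}, $\theta(\omega) = (S_\east', S_\north')$ is a compatible grading on $\calP(bd_2 - d_1, d_2)$ with weights $c$ on $S_\east'$ and $b$ on $S_\north'$. The hypothesis $\omega \in \CGbc^\top$ forces $S_\east \subseteq \rsh(S_\north)$ (as noted in the proof of \Cref{prop: compatibility mutation}), so the mutation bijects $S_\east$ with $S_\east'$, giving $|S_\east'| = \jj$. Directly from the definition of $S_\north'$ as the reversed complement of $S_\north$ in $\mathcal{P}_2$, we have $|S_\north'| = d_2 - \ii$. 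Applying $\varrho$ then sets $\widetilde S_\east = S_\east'$ and $\widetilde S_\north = S_\north' \cap \sh(S_\east')$. Since east-edge shadows depend only on east-edge weights, $\sh(\widetilde S_\east) = \sh(S_\east')$, so the inclusion $\widetilde S_\north \subseteq \sh(\widetilde S_\east)$ is immediate from the construction of $\varrho$.

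The crux of the proof is showing $|\widetilde S_\north| = c\jj - \ii$, which I would derive from two subclaims: (a) $|\sh(S_\east')| = c\jj$, and (b) the $\ii$ vertical edges of $\calP(bd_2 - d_1, d_2)$ outside $S_\north'$ all lie in $\sh(S_\east')$. For (a), in a compatible grading the shadows of east edges form a forest under the nesting order, as pairs of shadows are either nested or disjoint. Unwinding the shadow definition through this forest shows that, for each outermost $u \in S_\east'$, $|\sh(u)|$ equals $c$ times the number of $S_\east'$-edges in its shadow-subtree; summing over outermost edges telescopes to $c|S_\east'| = c\jj$. For (b), the mutation is designed to dualize the inclusion $S_\east \subseteq \sh(S_\north)$: the $\ii$ vertical edges outside $S_\north'$ are precisely the reversal-images of $S_\north$, and $\theta$ positions them inside $\sh(S_\east')$. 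Combining (a) and (b) yields $|\widetilde S_\north| = |\sh(S_\east')| - \ii = c\jj - \ii$.

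The main technical obstacle is subclaim (b): confirming that the reversed images of $S_\north$ land in $\sh(S_\east')$ after mutation. This requires carefully tracing the formula defining $S_\east'$ in \Cref{defn: compatible pair mutation}, which expresses $S_\east'$ in terms of $\rsh(S_\north')$-indexing that is dual to the $\rsh(S_\north)$-indexing of $S_\east$, and verifying that the resulting east edges have shadows reaching the relevant vertical edges. I expect this to follow from an adaptation of the analogous ``recovery'' property in \cite[Lemma 3.12]{LLZ} to the generalized weighted setting, with $b$ and $c$ replacing the boolean weights.
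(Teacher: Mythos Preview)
Your overall decomposition matches the paper's: verify compatibility and the shadow inclusion via $\varrho$, get $|\widetilde S_\east|=\jj$ from $\theta$, and then reduce the count $|\widetilde S_\north|=c\jj-\ii$ to claims (a) and (b). The paper likewise isolates the size of $\widetilde S_\north$ as the only nontrivial step, so the approaches coincide in structure.

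The gap is in your plan for (b). You propose to extract it from the mutation formalism alone, citing \cite[Lemma~3.12]{LLZ}. That lemma controls how $S_\east$ is recovered from $S_\east'$, but it does not by itself force the reversal-images of $S_\north$ to fall inside $\sh(S_\east')$; for a merely shadowed compatible grading this inclusion can fail. What the paper actually uses is the tight-grading structure established earlier: by \Cref{lem: tight grading consecutiveness} and \Cref{lem: tight grading bounding labels}, the support of $\omega$ lies in a single consecutive subpath $\overrightarrow{u_{\max}v_{\max}}$ whose vertical edges are exactly $\sh(S_\east)\cup\{v_{\max}\}$, and crucially $\omega(v_{\min})=0$ so $v_{\min}\notin S_\north$. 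Combined with the \cite[Lemma~3.5]{LLZ} correspondence $\sh(\widetilde S_\east)\leftrightarrow\{v_{\max}\}\cup\sh(S_\east)\setminus\{v_{\min}\}$, this gives $S_\north\subseteq\{v_{\max}\}\cup\sh(S_\east)\setminus\{v_{\min}\}$, which is precisely your (b). In short, (b) is not a general feature of $\theta$ but a consequence of the tight-grading hypothesis (implicit in the ambient choice of $(d_1,d_2)$); you should invoke \Cref{lem: tight grading bounding labels} rather than \cite[Lemma~3.12]{LLZ}. Your separate argument for (a) via the nesting forest is fine and is equivalent to reading off the size from the same correspondence.
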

\begin{proof}
By the definition of the retraction map, we can readily see that $\widetilde S_\north \subseteq \sh(\widetilde S_\east)$.  Moreover, since retraction only shrinks the set of vertical edges with nonzero grading value, then the compatibility of $\widetilde \omega$ follows directly from that of $\theta(\omega)$, shown in \Cref{prop: compatibility mutation}.   Thus, it only remains to check the sizes of $\widetilde S_\east$ and $\widetilde S_\north$.

By construction, we have $\big|\widetilde S_\east\big| = \big|S_\east\big| = j$.  Following from \cite[Lemma 3.5]{LLZ}, the edges in $\sh(\widetilde S_\east) \supseteq \widetilde S_\north$ correspond to $\{v_{\max}\} \cup \sh(S_\east) \cut \{v_{\min}\}$.  Since $\omega(v_{min}) = 0$ by \Cref{lem: tight grading bounding labels}, the number of edges in $\widetilde S_\north$ is precisely $\big|\sh(\widetilde S_\east)\big| - \ii = c \jj - \ii$.
\end{proof}

\begin{proof}[Proof of \Cref{thm: weyl symmetry}]
It follows from \Cref{lem: mutation retraction compatible} and the fact that 
$$b (c \jj - \ii )d_2 - c \jj (b d_2 - d_1) =  c \jj d_1 - b \ii d_2  = -\gcd(bi,cj)$$
that $\widetilde \omega$ is also a tight grading.  To see that this map is bijective, we first note that injectivity of $\theta$ follows from \Cref{prop: compatibility mutation}, hence the injectivity of $\rho \circ \theta$ then follows \Cref{lem: tight grading bounding labels}.  By applying the analogous map to the tight gradings in $\CGbc^\top(b d_2 - d_1, d_2, c\jj - \ii, \jj)$, we obtain bijectivity.
\end{proof}

\begin{rem}\label{rem: catalan bijection}
The central wall $\mathbb{R}_{\leq 0}(3, 2)$ in $\frakD_{(3,2)}$ is preserved under the Weyl symmetry.  Hence, this shows that the composition of mutation and retraction gives a bijection between the sets of tight gradings in $\CG_{3,2}^{\top}(3i+2,2i+1,i,i)$ and in $\CG_{3,2}^{\vdash}(3i+1,2i+1,i,i)$, each corresponding to coefficients of the central wall-function in $\frakD_{(3,2)}$.
\end{rem}

\section{Counting tight gradings}

\label{sec:countingTG}

Recall from the statement of Theorem  \ref{thm: universal coefficient}(2) that for partitions $\pt_1 \vdash \pp$ and $\pt_2 \vdash \qq$, $\lambda(\pt_1, \pt_2)$ equals the number of tight gradings $\omega$ on $\mathcal P(d_1,d_2)$ with 
$\pt_1 = (\omega(v)\mid v\in S_\north(\omega))$ and $\pt_2 = (\omega(u)\mid u\in S_\east(\omega)),$
as long as $d_1 \geq \pp$ and $d_2 \geq \qq$ are chosen so that $|\pp d_2 - \qq d_1| = \gcd(\pp,\qq)$.
Nonetheless, determining $\lambda(\pt_1, \pt_2)$ by counting tight gradings in general seems to be a very hard combinatorial problem. We first give a closed formula for $\lambda(\pt_1, \pt_2)$ in the case where one of the partitions is of length one, which generalizes \Cref{cor: length one partitions}. Then we derive a total count of tight gradings with $|\pt_1| = |\pt_2| = k$. Both proofs make use, in a sense, of wall-crossing formulas, namely, the additional information that the counts are encoded in consistent scattering diagrams.  We conclude this section with a combinatorial argument in a special case.

For a partition $\pt$, write $\mu(\pt)$ for its tuple of multiplicities and denote
\[
    \binom{z}{\mu(\pt)} = \dfrac{z(z-1)\cdots (z- \ell(\pt) + 1)}{\prod\limits_{m\in \mu(\pt)} m!}.
\]

\begin{prop}\label{prop: formula one length one part}
    For any $\pt \vdash \jj$, we have
    \begin{equation}\label{eq: formula one length one part}
        \lambda((\ii), \pt) = \frac{\gcd(\ii, \jj)}{i}\binom{\ii}{\mu(\pt)}.
    \end{equation}
\end{prop}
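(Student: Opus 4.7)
The plan is to translate the proposition into a coefficient identity in the universal scattering diagram and verify it by combining a symmetry argument with a quiver-moduli computation.

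First, I would specialize the universal scattering diagram of \Cref{thm: universal coefficient} by setting $p_{1, k} = \delta_{k, i}$, equivalently $P_1 = 1 + x_1^i$. Only $\pt_1 = (i)$ then contributes to $\prod_{p \in \pt_1} p_{1, p}$ at order $x_1^i$, so on the primitive ray $(d, e) = (i/k, j/k)$ with $k = \gcd(i, j)$ we have
\[
    [x_1^i x_2^j]\, \fclu{d}{e}\big|_{P_1 = 1 + x_1^i} \;=\; \sum_{\pt \vdash j} \lambda\bigl((i), \pt\bigr) \prod_{q \geq 1} p_{2, q}^{m_q(\pt)}.
\]
Since $[x^j] P_2(x)^i = \sum_{\pt \vdash j} \binom{i}{\mu(\pt)} \prod_q p_{2, q}^{m_q}$ by the multinomial expansion, the proposition is equivalent to the generating-function identity
\[
    [x_1^i x_2^j]\, \fclu{d}{e}\big|_{P_1 = 1 + x_1^i} \;=\; \frac{k}{i}\, [x^j]\, P_2(x)^i.
\]

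Second, I would verify this coefficient identity. The specialized diagram $\frakD(1 + x_1^i, P_2)$ is invariant under $x_1 \mapsto \zeta x_1$ for every $i$-th root of unity $\zeta$, which forces its wall function on $(d, e)$ to lie in $\Bbbk\llbracket x_1^i x_2^j\rrbracket$. To extract the leading coefficient, I would adapt the Reineke--Weist functional equation (\Cref{thm20250702n1}) to this universal specialization: the relevant moduli $M^{\mathrm{st}}((i), \lambda_2)$ for $\lambda_2 \vdash j$ live over the bipartite quiver with a single sink of dimension $i$ and $\ell(\lambda_2)$ sources of dimensions given by $\lambda_2$, and their Euler characteristics should assemble into a closed form. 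The factor $1/i$ reflects the $\mathbb{Z}/i$-symmetry noted above, while the factor $k = \gcd(i, j)$ arises from the change-of-lattice correction of \Cref{prop: change of lattice}. Matching coefficients of $\prod_q p_{2, q}^{m_q}$ then yields the stated closed formula for $\lambda((i), \pt)$.

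The main obstacle will be carrying out the quiver-moduli computation uniformly in the arbitrary power series $P_2$. A cleaner workaround is a polynomial-universality argument: further specialize to $p_{2, q} = \binom{b}{q}$, so that $P_2 = (1 + x_2)^b$ and the left-hand side becomes accessible via the methods of \Cref{section: wall function quiver moduli} (in particular \Cref{prop: central wall} and its companions) for each fixed $b$, and then extend to arbitrary $p_{2, q}$ by treating $b$ as a polynomial indeterminate. A purely combinatorial alternative would be to construct, via the pictorial description of \Cref{picto_tight}, a bijection between tight gradings with $\pt_1 = (i)$ and orbits of the natural $\mathbb{Z}/i$-action on cyclic arrangements of $\pt$ in the $i$ edges of $\sh(v_{\max})$, combined with the $\gcd(i, j)$ choices of frame from \Cref{thm: universal coefficient}(2); the factor $\gcd(i, j)/i$ would then emerge as the reciprocal of the typical orbit size.
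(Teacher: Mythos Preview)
Your first paragraph is exactly right: specializing the universal scattering diagram to $P_1=1+x_1^i$ reduces the proposition to the coefficient identity
\[
    [x_1^i x_2^j]\,\fclu{d}{e}\big|_{P_1=1+x_1^i}\;=\;\frac{\gcd(i,j)}{i}\,[x^j]\,P_2(x)^i,
\]
and this is precisely what the paper proves. The divergence is in how you propose to verify this identity.

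The paper's argument is a one-line change-of-lattice, but not the one in \Cref{prop: change of lattice} that you cite. Instead, one compares $\frakD(1+x^i,P_2)$ with $\frakD(1+x,P_2^{\,i})$: replacing the lattice $\mathbb{Z}^2$ by $i\mathbb{Z}\oplus\mathbb{Z}$ sends $x_1^i\mapsto x_1$ and raises $P_2$ to the $i$-th power, and the resulting wall functions on matching rays satisfy $\frakf = (\frakf')^{\gcd(i,j)/i}$. In the second diagram the first partition must be $(1)$, and $\lambda((1),\mathbf Q)$ is already known from \Cref{cor: length one partitions}: it is $1$ when $\ell(\mathbf Q)=1$ and $0$ otherwise. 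So the $xy^j$-coefficient of $\frakf'$ is simply the $y^j$-coefficient of $P_2^{\,i}$, and the identity drops out. Your $\mathbb Z/i$-symmetry observation is morally the shadow of this lattice change, but by itself it only tells you the wall function lives in $\Bbbk\llbracket x_1^ix_2^j\rrbracket$, not what its coefficients are.

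Your proposed alternatives are all substantially harder than this. The quiver-moduli route would require computing $\chi(M^{\mathrm{st}}((i),\lambda_2))$ uniformly in $\lambda_2$, which is not available off the shelf. The polynomial-universality route via $P_2=(1+x_2)^b$ is in principle sound, but the results you reference (\Cref{prop: central wall} etc.) only handle small $i$ or $j$, not arbitrary $(i,j)$. And the combinatorial bijection is explicitly flagged by the paper as open in general: the factor $\gcd(i,j)/i$ need not even be the reciprocal of an integer, and the paper notes that $\gcd(i,j)$ need not divide $\lambda((i),\pt)$, so your ``typical orbit size'' heuristic cannot be made literal.
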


\begin{proof}
    Consider the scattering diagrams $\frakD(1+x^i, P_2)$ and $\frakD(1+x, P_2^i)$, where $P_2 = 1 + \sum_{k\geq 1}p_{2,k}y^k$. Note that we have the expansion
    \begin{equation*}
        P_2^i = 1 + \sum_{k\geq 1}\sum_{\mathbf{Q}\vdash k}\binom{i}{\mu(\mathbf{Q})}p_{2}^{\mathbf{Q}}y^k,
    \end{equation*}
    where $p_2^{\mathbf{Q}}$ denotes the monomial $\prod_{k\in \mathbf{Q}}p_{2,k}$.
    
    The two scattering diagrams are related by a change-of-lattice transformation (slightly more general than \Cref{prop: change of lattice} but can be proven similarly): the wall-function $\frakf(x^iy^j)$ on $\mathbb{R}_{\leq 0}(i,j)$ of $\frakD(1+x^i, P_2)$ and the wall-function $\frakf'(xy^j)$ on $\mathbb{R}_{\leq 0}(1, j)$ of $\frakD(1+x, P_2^i)$ are related by
    \[
        \frakf(xy^j) = \frakf'(xy^j)^{\frac{\gcd(i, j)}{i}}.
    \]
    Comparing the $xy^j$-coefficients on both sides yields
    \[
        \sum_{\pt\vdash \jj}\lambda((\ii), \pt)p_2^{\pt} = \frac{\gcd(\ii,\jj)}{\ii}\sum_{\mathbf{Q}\vdash \jj}\lambda((1), \mathbf{Q})\prod_{p\in \mathbf{Q}}\sum_{\pt\vdash p}\binom{\ii}{\mu(\pt)}p_2^{\pt}.
    \]
    However, for any partition $\mathbf{Q}$ with $\ell(\mathbf{Q})>1$, $\lambda((1), \mathbf{Q}) = 0$ by its tight grading interpretation. As a special case of \Cref{cor: length one partitions}, we have $\lambda((1), (\jj)) = 1$. Then the above equality reduces to
    \[
        \sum_{\pt\vdash \jj}\lambda((\ii), \pt)p_2^{\pt} = \frac{\gcd(\ii, \jj)}{\ii}\sum_{\pt\vdash \jj}\binom{\ii}{\mu(\pt)}p_2^{\pt}.
    \]
    The desired formula follows from comparing the $p_2^\pt$-coefficients of each $\pt$ on both sides.
\end{proof}

For example, if $\ell(\pt)>\ii$, then $\lambda((\ii), \pt)$ vanishes; if $\ell(\pt) = 1$, then $\lambda((\ii), (\jj)) = \gcd(\ii, \jj)$. It is yet unclear how to show the formula \eqref{eq: formula one length one part} purely by tight grading combinatorics. One may allude to the analysis in the proof of \Cref{cor: length one partitions}, but notice that in general $\gcd(i,j)$ does not necessarily divide $\lambda((i), \pt)$.  See the end of this section for an example of tight grading combinatorics when $\ell(\pt)=\ii$.

Further, we can derive one total count of tight gradings using the quiver methods of Section \ref{section: wall function quiver moduli}. We consider tight gradings of total weights $(k, k)$ on the maximal Dyck path $\mathcal{P}(k+1,k)$ for $k\geq 1$, which proceeds from $(0,0)$ to $(k+1,k)$ in steps $$E(EN)^k=u_1u_2v_1u_3v_2\ldots u_{k+1}v_k,$$ where $E$ (resp.~$N$) denotes an east (resp.~north) step. 
Our aim is to prove that they are counted by the series \cite[A002293]{OEIS}:

\begin{prop}\label{prop: total num tight grading central}
The number of tight gradings of total weights $(k, k)$ on $\mathcal{P}(k+1,k)$ equals $$ \sum_{\pt_1\vdash k}\sum_{\pt_2\vdash k} \lambda(\pt_1, \pt_2) = \frac{1}{4k+1}\binom{4k+1}{k}=\frac{1}{3k+1}\binom{4k}{k}.$$
\end{prop}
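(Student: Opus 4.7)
The plan is to reduce the identity to the Fuss--Catalan functional equation by specializing the quiver-theoretic formulas of \Cref{section: wall function quiver moduli} at $b = c = -1$.

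First, I would unpack the left-hand side using \Cref{thm: main 1 intro}(1). For $(d,e)=(1,1)$, the polynomial
\[
    \tvco^{c,b}(k,k) = \sum_{\pt_1\vdash k}\sum_{\pt_2\vdash k}\lambda(\pt_1,\pt_2)\prod_{p\in\pt_1}\binom{c}{p}\prod_{q\in\pt_2}\binom{b}{q}
\]
encodes the $\lambda$-values through its dependence on $b,c$. Using $\binom{-1}{p}=(-1)^p$, one gets $\prod_{p\in\pt_1}\binom{-1}{p} = (-1)^{|\pt_1|} = (-1)^k$, so the total $b,c$-weight becomes $(-1)^{2k}=1$. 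Therefore the target sum equals $\tvco^{c,b}(k,k)\big|_{b=c=-1}$, and the generating function $F(t) := f_{(1,1)}(t)\big|_{b=c=-1}$ satisfies $F(t) = 1 + \sum_{k\geq 1}\bigl(\sum_{\pt_1,\pt_2\vdash k}\lambda(\pt_1,\pt_2)\bigr)\,t^k$.

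Second, I would specialize the quiver-theoretic functional equation. As computed in \Cref{section: wall function quiver moduli} (proof of \Cref{C7}, \Cref{C8}), one has $\chi_{(1,1)}(1)=bc$ and $\chi_{(1,1)}(k)=0$ for $k\geq 2$, so \Cref{thm20250702n1} collapses to
\[
    f_{(1,1)}(t) = \bigl(1 - tf_{(1,1)}(t)^{(bc-b-c)/bc}\bigr)^{-bc}.
\]
Setting $b=c=-1$, the inner exponent is $(1+1+1)/1=3$ and the outer exponent is $-1$, giving
\[
    F(t) = (1 - tF(t)^3)^{-1} \iff F(t) = 1 + tF(t)^4.
\]
Lagrange inversion applied to $G := F-1 = t(1+G)^4$ then yields
\[
    [t^k]F = \frac{1}{k}[z^{k-1}](1+z)^{4k} = \frac{1}{k}\binom{4k}{k-1} = \frac{1}{3k+1}\binom{4k}{k} = \frac{1}{4k+1}\binom{4k+1}{k},
\]
which is the asserted count.

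The main obstacle I anticipate is justifying the $b=c=-1$ specialization of the functional equation, since the exponents in it are rational functions of $b,c$. The fix is that by the polynomial dependence of the coefficients of $f_{(1,1)}(t)$ on $b,c$ established in \Cref{section: wall function quiver moduli}, both sides expand as power series in $t$ whose $t^k$-coefficients are polynomials in $b,c$; so the equation is an honest identity in $\mathbb{Q}[b,c]\llbracket t\rrbracket$, and specialization at integer values (in particular $b=c=-1$) is valid. Once this bookkeeping is in place the remainder of the argument is an immediate application of Lagrange inversion.
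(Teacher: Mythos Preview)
Your proposal is correct and follows essentially the same strategy as the paper: both use the collapse of the quiver functional equation on the central slope (via $\chi_{(1,1)}(k)=0$ for $k\geq 2$), interpret the coefficients as polynomials in $b,c$ (equivalently $\ell_1,\ell_2$), and specialize to $-1$ to turn the weighted tight-grading sum into the raw count, reducing to a Fuss--Catalan Lagrange inversion.

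The only difference is the order of operations. The paper first applies Lagrange inversion for general $\ell_1,\ell_2$ to obtain the closed form $\frac{\ell_1\ell_2}{(\ell_1-1)(\ell_2-1)k+\ell_1\ell_2}\binom{(\ell_1-1)(\ell_2-1)k+\ell_1\ell_2}{k}$, observes this is visibly polynomial, and then specializes; you specialize the functional equation first and then solve. Your route therefore carries the extra burden of checking that the right-hand side $(1-tf^{(bc-b-c)/bc})^{-bc}$ actually lies in $\mathbb{Q}[b,c]\llbracket t\rrbracket$ before specialization, which is true but uses the divisibility $bc\mid \tvco_k$ from \Cref{thm: main 1 intro}(1) (so that $f=1+bc\cdot h$ and the fractional binomial expansion clears denominators). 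You should cite \Cref{thm: main 1 intro}(1) rather than \Cref{section: wall function quiver moduli} for this polynomiality, and make that divisibility step explicit; once you do, the argument is complete and equivalent to the paper's.
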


\begin{proof}Let $\ell_1$, $\ell_2$ be arbitrary positive integers. We consider the tropical vertex as in \cite[Section 3]{BLM}, with variables $p_{i,j}$ specialized to
$$p_{i,j}=\binom{\ell_i}{j},\; i=1,2,\, j\geq 1.$$
Then \cite[Theorem 3.5]{BLM} (see also \Cref{thm: universal coefficient}) reads as follows:
$$f_{(1,1)}=1+\sum_{k\geq 1}\sum_\omega{\rm wt}(\omega)(xy)^k,$$
where the inner sum ranges over all tight gradings $\omega$ on $\mathcal{P}(k+1,k)$ as above, and the weight is defined as
$${\rm wt}(\omega)=\prod_{i=1}^{k+1}\binom{\ell_1}{\omega(u_i)}\prod_{j=1}^k\binom{\ell_2}{\omega(v_j)}.$$
On the other hand, we consider again the refined GW/Kronecker correspondence \cite{RW}. Namely, vanishing results for Euler characteristic of quiver moduli yield by \cite[Section 11]{RW}:
$$f_{(1,1)}=H(xy)^{\ell_1\ell_2},$$
where the series $H(z)$ is determined by the functional equation
$$H(z)=(1-zH(z)^{\ell_1\ell_2-\ell_1-\ell_2})^{-1}.$$
Applying the Lagrange inversion formula (similarly to \cite[Corollary 11.2]{RW}) we find
$$f_{(1,1)}=1+\sum_{k\geq 1}\frac{\ell_1\ell_2}{(\ell_1-1)(\ell_2-1)k+\ell_1\ell_2}\binom{(\ell_1-1)(\ell_2-1)k+\ell_1\ell_2}{k}(xy)^k.$$
Comparing coefficients, we find
$$\sum_{\omega}\prod_{i=1}^{k+1}\binom{\ell_1}{\omega(u_i)}\prod_{j=1}^k\binom{\ell_2}{\omega(v_j)}=\frac{\ell_1\ell_2}{(\ell_1-1)(\ell_2-1)k+\ell_1\ell_2}\binom{{(\ell_1-1)(\ell_2-1)k+\ell_1\ell_2}}{k}.$$
We note that both sides behave polynomially in $\ell_1$, $\ell_2$. Since the identity holds for all nonnegative $\ell_1,\ell_2$, it holds as an identity of (degree $k$) polynomials in $\ell_1,\ell_2$, considered as variables. We can therefore specialize $\ell_1,\ell_2$ to $-1$. Using
$$\binom{-1}{k}=(-1)^k,$$we find
$$\sum_{\omega}(-1)^k(-1)^k=\frac{1}{4k+1}\binom{4k+1}{k}.$$
But the left-hand side is just the number of all tight gradings on $\mathcal{P}(k+1,k)$, finishing the proof.\end{proof}

Finally, to give a better sense concerning the combinatorics of tight gradings, we close this section with a second proof of Proposition \ref{prop: formula one length one part} restricted to the special case that $i=j$.

\begin{proof} [Combinatorial Proof of Proposition \ref{prop: formula one length one part} when $i=j$] 

When
$\pt \vdash \ii$, we have $\lambda((\ii), \pt) = \binom{\ii}{\mu(\pt)}$, which can be rewritten as 
$$\binom{\ii}{\mu(\pt)} = \frac{\ii(\ii-1)\cdots (\ii-\ell(\pt)+1)}{\prod_{m \in \mu(\pt)} m!} = \frac{\ii!}{\prod_{m\in \mu(\overline{\pt})} m!}$$ where $\overline{\pt}$ is the partition $\pt$ extended to length $\ii$ by appending $(\ii - \ell)$ zeroes.  Consequently, $\lambda((\ii), \pt) = \binom{\ii}{\mu(\pt)}$ counts the number of rearrangements of the extended partition $\overline{\pt}$ of length $i$ (including zeroes). 

We demonstrate the following bijection between the set of tight gradings on $\mathcal{P}(\ii+1,\ii)$ with $\pt_1 = (\ii)$ and $\pt_2 \vdash \ii$ and the set of rearrangements of $\overline{\pt_2}$:

1) The Maximal Dyck path $\mathcal{P}(\ii+1,\ii)$ has the form $E(EN)^{\ii}$.  Given the composition $(a_1,a_2,\dots, a_\ii)$ which is a rearrangement of $\overline{\pt_2}$, such that $0 \leq a_j \leq \ii$ and $a_1+a_2+\dots+a_\ii = \ii$, we show there is a unique tight grading with $\omega_2(v_j) = a_j$ where $v_j$ is the $j$th $N$ step of $\mathcal{P}(\ii+1,\ii)$.

2) We claim by use of the Cycle Lemma \cite{cycle-lemma}, there is a unique choice of $0 \leq h \leq \ii-1$ such that we have the following inequalities satisfied by partial sums:
$$\mathrm{For~}1\leq g \leq h,~~\sum_{m=1-h+\ii}^{g-h+\ii} a_m \leq (g-1),~~~\mathrm{and~for~}h < g \leq \ii,~~\left(\sum_{m=1-h+i}^\ii a_m + \sum_{m=1}^{g-h} a_m\right) \leq g.
$$

To see this, we first extend the composition $(a_1,a_2,\dots, a_\ii)$ to one of length $\ii+1$ by assigning $a_0=0$, and then turn $(a_0,a_1,a_2,\dots, a_\ii)$ into a lattice path from $(0,0)$ to $(\ii+1,\ii)$ of the form
$E(N^{a_0})E(N^{a_1})E(N^{a_2})\cdots E(N^{a_i})$, i.e. for $0 \leq g \leq \ii$, alternate between a single $E$ step and a sequence of $a_g$ $N$ steps.  Since $a_0 +a_1+a_2+\dots + a_\ii = \ii$, we reach $(\ii+1,\ii)$ as desired.  The Cycle Lemma states that there is a unique cyclic rotation of the steps of this lattice path that yields a Dyck path.  That rotation yielding the Dyck path equivalently yields a rotated tuple 
$(a_j,a_{j+1},\dots,a_{\ii-1}, a_\ii, a_0,a_1,a_2,\dots, a_{j-1})$ satisfying the system of inequalities $a_j \leq 0$, $a_j + a_{j+1} \leq 1$, $a_j + a_{j+1} + a_{j+2} \leq 2$, and continuing so that each length $g$ partial sum is less or equal to $g-1$. Removing the entry for $a_0=0$, we obtain the above system of inequalities after letting $j= i+1-h$.

3) The points closest to the diagonal of $\mathcal{P}(\ii+1,\ii)$ are $p_0 = (0,0)$, $p_1 =(\ii,\ii-1), p_2 = (\ii-1,\ii-2),\dots, p_{\ii-1} = (2,1)$.  Starting with a composition $(a_1,a_2,\dots, a_\ii)$ as above, we identify the unique choice of $h$ (in the range $0 \leq h \leq \ii-1$) such that the above system of inequalities is satisfied.  Then we work in the frame starting at point $p_h$ but for $1 \leq j \leq \ii$, let $\omega_2(v_j) = a_j$ where $v_j$ is the $j$th $N$ step of $\mathcal{P}(\ii+1,\ii)$, so that $v_1$ is the $N$ step after the unique $EE$ subsequence of $\mathcal{P}(\ii+1,\ii)$.  If $\ell(\pt_2) = \rho$, we get $\rho(\rho+1)/2$ tiles as part of the horizontal footprint attached to these vertical edges.

By construction of this tight grading, starting the frame from $p_h$, this means we have tiles of length $a_{\ii+1-h}$ in the bottom row, length $a_{\ii+1-h}+a_{\ii+2-h}$ in the row $2$nd from the bottom, and continuing so we have tiles of length $a_{\ii+1-h}+a_{\ii+2-h}+\dots +a_{\ii+j-h}$ in the row $j$th from the bottom, and length $a_1+a_2+\dots + a_\ii = \ii$ on the top row.

By the corresponding system of inequalities for this choice of $h$, we have exactly enough room for the vertical tile of length $\ii$ corresponding to partition $\pt_1 = (\ii)$ on the leftmost horizontal edge of this frame.  
\end{proof}

We illustrate this proof with \Cref{exI3}.

\begin{exmp} 
\label{exI3}
Suppose that $\pt_1 = (\ii)=(3)$ and $\pt_2 \vdash 3$.  We work with the maximal Dyck path $\mathcal{P}(4,3)$ of the form $EENENEN$.  If $\pt_2 = (3)$, $\overline{\pt_2} = (3,0,0)$ and the possible rearrangements are $(a_1,a_2,a_3) = (0,0,3), (0,3,0)$, or $(3,0,0)$.  The corresponding tight gradings are illustrated, respectively, in the top row of \Cref{fig:iEq3}.  On the other hand, if $\pt_2 = (2,1)$, $\overline{\pt_2} = (2,1,0)$ and there are six possible rearrangements:
$(a_1,a_2,a_3) = (0,2,1), (2,1,0), (0,1,2), (1,2,0),$ $(1,0,2)$, or $(2,0,1)$.  The corresponding tight gradings are illustrated with respect to this order, in reading order, in the second, third, and fourth row of \Cref{fig:iEq3}.  Finally, if $\pt_2 = \overline{\pt_2} = (1,1,1)$, then $(a_1,a_2,a_3) = (1,1,1)$ is the only possible rearrangement, and the associated tight grading appears in the fifth row of \Cref{fig:iEq3}. 

The arrangement of these tight gradings in \Cref{fig:iEq3} was also chosen so that relative to the use of the Cycle Lemma in Step 2, $h=0$ for the tight gradings (and corresponding $3$-tuples $(a_1,a_2,a_3)$) in the leftmost column, $h=1$ for those in the middle column, and $h=2$ for those in the rightmost column.
Also, by adding in $a_0=0$, we see that each row is an orbit under rotation of $(a_0,a_1,a_2,a_3)$.
In particular $(0,0,2,1)$ and $(0,2,1,0)$ comprise the 2nd line, $(0,0,1,2)$ and $(0,1,2,0)$ comprise the 3rd line, and 
$(0,1,0,2)$ and $(0,2,0,1)$ comprise the 4th line. 
\end{exmp}

\begin{figure}
\begin{center}
\includegraphics[width=0.35\textwidth]{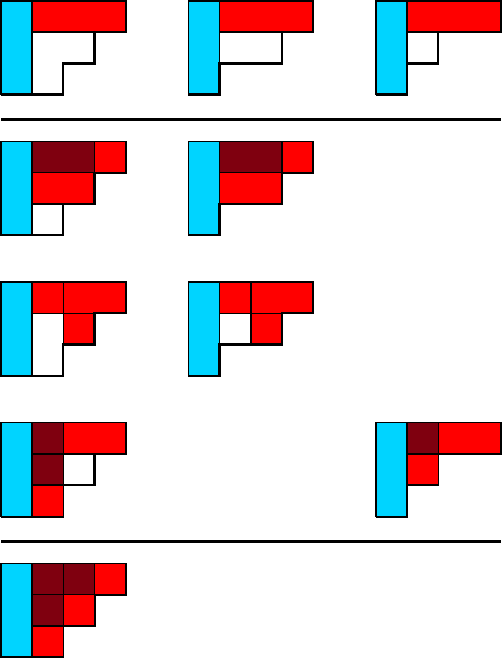}
\end{center}
\caption{Tight gradings in $\mathcal{P}(4,3)$ associated to rearrangements of $\overline{\pt_2} \vdash 3$.  See \Cref{exI3}.  Here, line by line, we see 
(Top Line): $(0,0,3)$, $(0,3,0)$, $(3,0,0)$;
(2nd Line): $(0,2,1)$, $(2,1,0)$;
(3rd Line): $(0,1,2)$, $(1,2,0)$; \\
(4th Line): $(1,0,2)$; $(2,0,1)$; and 
(5th Line): $(1,1,1)$.}
\label{fig:iEq3}
\end{figure}

\section{Final remarks}
\subsection{Combinatorics of tight gradings}
In \Cref{picto_tight}, we described a process for generating a tiling associated to each tight grading.  This tiling is only a partial tiling, in that there can be empty spaces in the region between the Dyck path and the frame.  It could be interesting to characterize the number of such spaces, and in particular study the case of ``tightest gradings'' when no such spaces appear.  Note that this tiling construction is different from that described in \cite{BLM} and \cite{BLM2}, in that the footprints contain a superset of the tiles in earlier constructions.  

As discussed in \Cref{rem: catalan bijection}, the composition of mutation and retractions gives a bijection between the tight gradings in $\CG_{3,2}^{\top}(3i+2,2i+1,i,i)$ and in $\CG_{3,2}^{\vdash}(3i+1,2i+1,i,i)$.  Each set of tight gradings gives a combinatorial interpretation of $\tau^{3,2}(i,i)$.  The two sets of tight gradings correspond to a choice of positive or negative angular momentum broken line in the scattering diagram $\mathfrak{D}_{(3,2)}$, as described in \cite{BLM2}.  It would be interesting to construct a general bijection between the sets of tight gradings in $\CGbc^{\vdash}$ and $\CGbc^{\top}$ that each give a combinatorial interpretation to the same wall-function coefficient.

\subsection{Potential relation with stable trees}
There are several situations where Euler characteristics of quiver moduli spaces can be computed by counting so-called stable trees \cite{Weist}. In fact, this is one of the main techniques in \cite{RW} for obtaining the formulas which are used in Section \ref{section: wall function quiver moduli}. It is thus natural to ask for bijections between sets of stable trees and sets of tight gradings (maybe suitably weighted), but no general pattern emerged yet. One obstacle for such constructions is the fact that the notion of tight gradings is well-adapted to the natural Weyl group symmetry, whereas stable trees are not (the Weyl group symmetry corresponds to applying reflection functors to quiver representations, which does not preserve the class of tree representations naturally).

\subsection{Algebraicity and growth rate}

The conjectural wall-function in \Cref{C13} has the first few terms
\[
    1 + 2x_1^2x_2^5 + 5x_1^4x_2^{10} + 17x_1^6x_2^{15} + 64 x_1^{8}x_2^{20} + 259 x_1^{10}x_2^{25} + \cdots.
\]
Write the generating function as
\[
    S(x) = 1 + 2x + 5x^2 + 17x^3 + \cdots.
\]
Let $A(x) = 1 - xS(x)$. According to \cite[A217596]{OEIS}, it satisfies the algebraic equation
\[
    A(x)^3 = A(x)^2 - xA(x) - x^2.
\]
Then it follows that the coefficients have asymptotic exponential growth rate $27/5$.

Notice that according to the proof of \Cref{prop: central wall}, the wall-function on $\mathbb R_{\leq 0}(b, c)$ is algebraic over $\mathbb {Q}(x_1^{ib}x_2^{jc})$. The coefficients have asymptotic growth rate $\frac{m^m}{(m-1)^{m-1}}$ where $m = (b-1)(c-1)$, assuming $b, c>1$.

It is an interesting question whether every wall-function $\mathfrak{f}_{\mathbb{R}_{\leq 0}(db, ec)}$ with of a cluster scattering diagram is algebraic (first asked by Kontsevich \cite[Section 1.4]{GP}) and whether the coefficients asymptotically always have an exponential growth rate when $bc>4$ and $\mathbb{R}_{\leq 0}(db, ec)$ is in the ``Badlands'', i.e.,
\begin{equation}\label{eq: interval of badlands}
    \frac{e}{d} \in \left(\frac{bc-\sqrt{bc(bc-4)}}{2c}, \frac{bc+\sqrt{bc(bc-4)}}{2c}\right).
\end{equation}

Based on numerical experiments, we propose the following conjecture.

\begin{conj}\label{conj:growth rate}
    For any $r$ in the open interval \eqref{eq: interval of badlands}, the limit
    \[
        f(r)\coloneqq \lim_{\frac{j}{i}\to r,\,i\to \infty} \frac{\ln \tau(i, j)}{\sqrt{bcij - bi^2 - cj^2}}
    \]
    exists and is continuous in $r$.
\end{conj}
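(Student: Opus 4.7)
The plan is to reduce \Cref{conj:growth rate} to an asymptotic analysis of the quiver-moduli generating function $f_{(d,e)}(t)$ via \Cref{thm20250702n1}, then to extend the resulting growth exponent continuously across irrational slopes using a variational interpretation of tight gradings.

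Fix a rational slope $r = e/d$ (with $\gcd(d,e)=1$) in the Badlands interval \eqref{eq: interval of badlands}. By \Cref{prop: change of lattice} and \Cref{cor20250702n1}, the series $S_{(d,e)}(t)$ and $f_{(d,e)}(t)$ share the same exponential growth rate of coefficients (they differ only by raising to a fixed positive power), so it suffices to establish the existence of
\[
    \alpha(d,e) \coloneqq \lim_{k \to \infty} \frac{1}{k}\ln \tau(kd,ke),
\]
and to verify that $f(e/d) \coloneqq \alpha(d,e)/\sqrt{bcde - bd^2 - ce^2}$ extends continuously from $\mathbb{Q}$ to $\mathbb{R}$ on the Badlands interval. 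Existence of $\alpha(d,e)$ should follow from the functional equation of \Cref{thm20250702n1} together with the nonnegativity of $\chi_{(d,e)}(k)$: these force $f_{(d,e)}(t)$ to have a finite radius of convergence with a dominant positive real singularity whose reciprocal equals $\exp(\alpha(d,e))$, via a Pringsheim-type argument applied to the implicit equation. A Fekete-type subadditivity for the sequence $(\ln \tau(kd,ke))_k$, arising from the Harder--Narasimhan recursion underlying the $\chi_{(d,e)}(k)$, would give the same conclusion.

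For continuity in $r$, I would seek a variational formula for $\alpha(d,e)$ through tight gradings. By \Cref{thm: universal coefficient}, $\tau(kd,ke)$ is an integer count of tight gradings on $\mathcal{P}(kd,ke)$, and via the footprint reinterpretation of \Cref{redblue} these can be encoded as certain constrained tilings inside a frame of slope $r$. For $k$ large, one expects typical tight gradings to concentrate on a ``limit profile'' of footprint densities along the Dyck path, and a large-deviations principle would express $\alpha(d,e)$ as the maximum of an entropy functional $\mathcal{I}(\cdot;r)$ depending continuously on $r$. Standard envelope arguments would then yield continuity of $f$ on rational slopes, and uniform continuity on compact subintervals would force a continuous extension to all $r$ in the Badlands. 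The Weyl symmetry of \Cref{thm: weyl symmetry} cuts the analysis to half of the Badlands, and at the boundary one can calibrate against the known closed-form rates coming from \Cref{C7,C8}.

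The principal obstacle is establishing the large-deviations principle for tight gradings. Unlike local tiling models, the tightness/compatibility constraints encoded in the non-overlap condition of \Cref{redblue} produce long-range couplings along the Dyck path, so identifying the correct state space of limiting profiles and proving matched entropy upper and lower bounds is delicate; moreover, the dependence of the frame geometry on the slope $r$ must be shown to vary continuously in a way that is not destroyed by these nonlocal constraints. A purely analytic alternative would be to push the Lagrange-inversion framework of \Cref{section: wall function quiver moduli} into the interior of the Badlands by tracking the singularities of $f_{(d,e)}(t)$ as $(d,e)$ ranges over primitive vectors of slope approaching a given $r$; this also ultimately requires a quantitative control of how the dominant singularity varies with $(d,e)$, which is essentially equivalent to the continuity claim itself.
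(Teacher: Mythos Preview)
The statement you are attempting is \emph{Conjecture} 9.1 in the paper: it is posed there as an open problem motivated by numerical experiments, and the paper offers no proof. There is therefore nothing to compare your attempt to; the only relevant question is whether your proposal actually establishes the conjecture.

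It does not, and you are candid about this. Your outline names the right moving parts, but each step is a wish rather than an argument. For the radial limit $\alpha(d,e)=\lim_k \tfrac{1}{k}\ln\tau(kd,ke)$: a Pringsheim argument on $S_{(d,e)}(t)$ gives only that the dominant singularity is on the positive real axis; it does not by itself yield a finite, positive radius of convergence, nor that the coefficients have a genuine exponential rate (rather than merely $\limsup$). The functional equation of \Cref{thm20250702n1} involves an infinite product indexed by all $k$ with exponents $k\chi_{(d,e)}(k)$, so extracting a single dominant singularity would require a priori control of the whole sequence $\chi_{(d,e)}(k)$, which you do not have. The Fekete subadditivity you allude to is not established anywhere in the paper and does not follow from Harder--Narasimhan recursions in any evident way.

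For continuity in $r$, everything rests on the large-deviations principle for tight gradings, which you yourself flag as the principal obstacle; without it the envelope argument has no input. Two smaller points: the double limit in the conjecture (over all $(i,j)$ with $j/i\to r$, not just along a single ray) already requires uniformity across nearby primitive slopes even to \emph{state} $f(r)$ at rational $r$, so the radial analysis is not a reduction but only a first step; and your calibration ``at the boundary'' via \Cref{C7,C8} is misplaced, since those formulas concern the central wall $i=j$, which lies in the interior of the Badlands, not at its edge. In short, this is a reasonable research plan for attacking an open conjecture, but it is not a proof.
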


By taking $r = e/d$ and the sequence $(i, j) = (kd, ke)$ for $k\geq 1$, \Cref{conj:growth rate} implies an exponential asymptotic growth of the coefficients along the wall $\mathbb{R}_{\leq 0}(db, ec)$. Using the Weyl group symmetry \eqref{eq: weyl symmetry}, one can further show that $f(r)$ is bounded provided continuity.

In the case where $b = c$, \Cref{conj:growth rate} implies a conjecture of Michael Douglas stated in \cite[Conjecture 6.1]{Weist} and \cite[Section 6 (iii)]{GP}, where $\tau(d, e)$ equals the Euler characteristic $\chi(M^s_{d,e}(b))$ defined therein. We remark, however, unlike \cite[Conjecture 6.1]{Weist} that we do not expect $f(r)$ to extend continuously to the endpoints of the interval \eqref{eq: interval of badlands}.

\appendix

\section{\texorpdfstring{Tables of $\lambda(\pt_1, \pt_2)$}{Tables of Lambda(P1, P2)}}

We present tables of $\lambda(\pt_1, \pt_2)$ for $\pt_1\vdash k$, $\pt_2\vdash k$, $2\leq k \leq 6$. Notice that for a given $k$, the total sum of the entries is computed in \Cref{prop: total num tight grading central}. The formulas for the first row (equivalently the first column) of such tables are given by \Cref{prop: formula one length one part}. 
Since $|\pt_1| = |\pt_2|$, the combinatorial argument at the end of Section \ref{sec:countingTG} also applies.  A formula for general $\lambda(\pt_1, \pt_2)$ is currently unknown.

\begin{table}[hb]
\centering
\begin{tabular}{l|cc}
 & (2) & (1,1) \\
\hline
(2)   & 2 & 1  \\
(1,1) & 1 & 0 \\
\end{tabular}
\caption{$k=2$}
\end{table}

\vspace{-0.1em}
\begin{table}[ht]
\centering
\begin{tabular}{l|ccc}
 & (3) & (2,1) & (1,1,1) \\
\hline
(3)     & 3 & 6 & 1 \\
(2,1)   & 6 & 5 & 0 \\
(1,1,1) & 1 & 0 & 0 \\
\end{tabular}
\caption{$k=3$}
\end{table}

\begin{table}[ht]
\centering
\begin{tabular}{l|ccccc}
 & (4) & (3,1) & (2,2) & (2,1,1) & (1,1,1,1) \\
\hline
(4)         & 4 & 12 & 6 & 12 & 1 \\
(3,1)       & 12 & 23 & 10 & 10 & 0 \\
(2,2)       & 6 & 10 & 3 & 4 & 0 \\
(2,1,1)     & 12 & 10 & 4 & 0 & 0 \\
(1,1,1,1)   & 1 & 0 & 0 & 0 & 0 \\
\end{tabular}
\caption{$k=4$}
\end{table}

\begin{table}[ht]
\centering
\begin{tabular}{l|ccccccc}
 & (5) & (4,1) & (3,2) & (3,1,1) & (2,2,1) & (2,1,1,1) & (1,1,1,1,1) \\
\hline
(5)         & 5 & 20 & 20 & 30 & 30 & 20 & 1 \\
(4,1)       & 20 & 59 & 55 & 57 & 51 & 17 & 0 \\
(3,2)       & 20 & 55 & 36 & 48 & 32 & 13 & 0 \\
(3,1,1)     & 30 & 57 & 48 & 25 & 21 & 0 & 0 \\
(2,2,1)     & 30 & 51 & 32 & 21 & 14 & 0 & 0 \\
(2,1,1,1)   & 20 & 17 & 13 & 0 & 0 & 0 & 0 \\
(1,1,1,1,1) & 1 & 0 & 0 & 0 & 0 & 0 & 0 \\
\end{tabular}
\caption{$k=5$}
\end{table}

\begin{table}[ht]
\scriptsize
\centering
\begin{tabular}{l|ccccccccccc}
 & (6) & (5,1) & (4,2) & (4,1,1) & (3,3) & (3,2,1) & (3,1,1,1) & (2,2,2) & (2,2,1,1) & (2,1,1,1,1) & (1,1,1,1,1,1) \\
\hline
(6)       & 6   & 30  & 30  & 60  & 15  & 120 & 60  & 20  & 90  & 30  & 1 \\
(5,1)     & 30  & 119 & 114 & 176 & 57  & 332 & 114 & 52  & 156 & 26  & 0 \\
(4,2)     & 30  & 114 & 86  & 160 & 39  & 228 & 96  & 28  & 102 & 20  & 0 \\
(4,1,1)   & 60  & 176 & 160 & 169 & 78  & 295 & 51  & 44  & 66  & 0   & 0 \\
(3,3)     & 15  & 57  & 39  & 78  & 15  & 102 & 45  & 14  & 45  & 9   & 0 \\
(3,2,1)   & 120 & 332 & 228 & 295 & 102 & 397 & 83  & 48  & 86  & 0   & 0 \\
(3,1,1,1) & 60  & 114 & 96  & 51  & 45  & 83  & 0   & 12  & 0   & 0   & 0 \\
(2,2,2)   & 20  & 52  & 28  & 44  & 14  & 48  & 12  & 4   & 10  & 0   & 0 \\
(2,2,1,1) & 90  & 156 & 102 & 66  & 45  & 86  & 0   & 10  & 0   & 0   & 0 \\
(2,1,1,1,1)& 30 & 26  & 20  & 0   & 9   & 0   & 0   & 0   & 0   & 0   & 0 \\
(1,1,1,1,1,1) & 1 & 0 & 0 & 0 & 0 & 0 & 0 & 0 & 0 & 0 & 0 \\
\end{tabular}
\caption{$k=6$}
\end{table}

\newpage
\addtocontents{toc}{\protect\setcounter{tocdepth}{-1}}

\addtocontents{toc}{\protect\setcounter{tocdepth}{1}}
\enlargethispage{2\baselineskip}


\begin{thebibliography}{99}
\bibitem{Akagi} R.~Akagi, \emph{Explicit forms in lower degrees of rank 2 cluster scattering diagrams}, arXiv:2309.15470, 2024.
\bibitem{Akagi2} R.~Akagi, \emph{Some coefficients of rank 2 cluster scattering diagrams}, arXiv:2507.21516, 2025.
\bibitem{BLRS} J.~Berstel, A.~Lauve, C.~Reutenauer, and F.~V.~Saliola, \emph{Combinatorics on words: {C}hristoffel words and repetitions in words}, CRM~Monograph~Series~27, American Mathematical Society, Providence, RI (2009).
\bibitem{BLM}
A.~Burcroff, K.~Lee, and L.~Mou, \emph{Scattering diagrams, tight gradings, and generalized positivity}, Proc. Natl. Acad. Sci. USA \textbf{122} (2025), no.~18, e2422893122.
\bibitem{BLM2} A.~Burcroff, K.~Lee, and L.~Mou, \emph{Positivity of generalized cluster scattering diagrams}, arXiv:2503.03719, 2025.
\bibitem{cycle-lemma}
A.~Dvoretzky, and Th.~ Motzkin, \emph{A problem
of arrangements}, Duke Math.
J. {\bf 14} (1947), 305-313.
\bibitem{ERS} T.~Elgin, N.~Reading, and S.~Stella, \emph{Cluster scattering coefficients in rank $2$}, arXiv:2404.11381, 2025.
\bibitem{FG1}
V.~V. Fock and A.~B. Goncharov,
\emph{Cluster ensembles, quantization and the dilogarithm},
Ann. Sci. Éc. Norm. Supér. (4) \textbf{42} (2009), no.~6, 865--930.
\bibitem{FZ1} S.~Fomin and A.~Zelevinsky, \emph{Cluster algebras. {I}: {Foundations}}, J. Amer. Math. Soc. \textbf{15} (2002), no.~2, 497--539.
\bibitem{GL} T.~Gräfnitz and P.~Luo, \emph{Scattering diagrams: polynomiality and the dense region}, arXiv:2312.13990, 2023.
\bibitem{GHKK}
M.~Gross, P.~Hacking, S.~Keel, and M.~Kontsevich, \emph{Canonical bases for cluster algebras}, J.~Amer.~Math.~Soc. \textbf{31} (2018), no.~2, 497--608.
\bibitem{GP} M.~Gross and R.~Pandharipande, \emph{Quivers, curves, and the tropical vertex}, Port. Math. {\bf 67} (2010), no.~2, 211--259.
\bibitem{GPS} M.~Gross, R.~Pandharipande, and B.~Siebert, \emph{The tropical vertex}, Duke~Math.~J. \textbf{153} (2010), no.~2, 297--362.
\bibitem{GS} M.~Gross and B.~Siebert, \emph{From affine geometry to complex geometry}, Ann.~of~Math.~(2) \textbf{174} (2011), no.~3, 1301--1428.
\bibitem{KY} S.~Keel and T.~T.~Yu, \emph{The Frobenius structure theorem for affine log Calabi-Yau varieties containing a torus}, Ann.~of~Math.~(2) \textbf{198} (2023), no.~2, 419--536.
\bibitem{KS} M.~Kontsevich and Y.~Soibelman, \emph{Affine structures and non-{A}rchimedean analytic spaces}, The unity of mathematics, Progr. Math., vol. 244, Birkh\"{a}user Boston, Boston, MA, 2006, 321--385.
\bibitem{LLZ} K.~Lee, L.~Li, and A.~Zelevinsky, \emph{Greedy elements in rank 2 cluster algebras}, Selecta~Math.~(N.S.)~\textbf{20} (2014), no.~1, 57--82.
\bibitem{LS} K.~Lee and R.~Schiffler, \emph{A combinatorial formula for rank 2 cluster variables}, J.~Algebraic~Combin.~\textbf{37} (2013), no.~1, 67--85.
\bibitem{LSpositive} K.~Lee and R.~Schiffler, \emph{Positivity for cluster algebras}, Ann.~of~Math.~(2)~\textbf{182} (2015), no.~1, 73--125.
\bibitem{OEIS} OEIS Foundation Inc.~(2024), \emph{The On-Line Encyclopedia of Integer Sequences}, \url{https://oeis.org}.
\bibitem{RNCH} M.~Reineke, \emph{Cohomology of noncommutative Hilbert schemes}, Algebr.~Represent.~Theory~\textbf{8} (2005), 541--561.
\bibitem{RCQM} M.~Reineke, \emph{Cohomology of quiver moduli, functional equations, and integrality of Donaldson--Thomas type invariants}, Comp.~Math. \textbf{147} (2011), no.~3, 943--964.
\bibitem{RW} M.~Reineke, T.~Weist, \emph{Refined GW/Kronecker correspondence}, Math.~Ann.~\textbf{355} (2013), 17--56.
\bibitem{Rupgengreed} D.~Rupel, \emph{Greedy bases in rank 2 generalized cluster algebras}, arXiv:1309.2567, 2013.
\bibitem{Rup2} D.~Rupel, \emph{Rank two non-commutative {L}aurent phenomenon and pseudo-positivity}, Algebr.~Comb.~\textbf{2} (2019), no.~6, 1239--1273.
\bibitem{Weist} T.~Weist, \emph{Localization in quiver moduli spaces}, Represent.~Theory~\textbf{17} (2013), 382--425.
\bibitem{Zel} A.~Zelevinsky, \emph{What is a cluster algebra?}, Notices of the A.M.S. \textbf{54} (2007), no.~11, 1494--1495.
\end{thebibliography}
\end{document}